\theoremstyle{definition}
\newtheorem{theorem}{Theorem}[section]
\newtheorem{proposition}[theorem]{Proposition}
\newtheorem{corollary}[theorem]{Corollary}
\newtheorem{example}[theorem]{Example}
\edef\csname f\@Alph\@tempcnta\endcsname{\noexpand\mathfrak{\@Alph\@tempcnta}}
\edef\csname l\@Alph\@tempcnta\endcsname{\noexpand\mathbb{\@Alph\@tempcnta}}
\edef\csname c\@Alph\@tempcnta\endcsname{\noexpand\mathcal{\@Alph\@tempcnta}}
\edef\csname b\@Alph\@tempcnta\endcsname{\noexpand\mathbf{\@Alph\@tempcnta}}
\def\Inf{\operatornamewithlimits{inf\vphantom{p}}}
\newcommand{\dd}{{\delta}}
\newcommand{\DD}{{\Delta}}
\newcommand{\ee}{{\varepsilon}}
\newcommand{\convd}{\to_{\mathrm{d}}}
\newcommand{\convas}{\xrightarrow{\mathrm{a.s.}}}
\newcommand{\iid}{{\mathrm{i.i.d.}}}
\newcommand{\prob}{{\mathrm{Pr}}}
\newcommand{\median}{\mathop{\mathrm{median}}}
\newcommand{\one}{\mathrm{I}}
\newcommand{\field}{\mathrm{\bf k}}
\newcommand{\Filt}{\mathrm{{\lF}ilt}}
\newcommand{\Lip}{\mathop{\mathrm{Lip}}}
\newcommand{\dH}{d_{\mathrm{H}}}
\providecommand{\dW}[1]{d_{\mathrm{W}_{#1}}}
\newcommand{\pers}{\mathrm{pers}}
\newcommand{\ad}{{\lR^{2}_{\mathrm{ad}}}}
\newcommand{\Span}{\mathop{\mathrm{Span}}}
\newcommand{\kG}{k_{\mathrm{G}}}
\newcommand{\Kc}{\cK_{\mathrm{c}}}
\newcommand{\Bdd}{\mathop{\mathrm{Bdd}}}
\providecommand{\abs}[1]{\left\lvert#1\right\rvert}  
\providecommand{\norm}[1]{\left\lVert#1\right\rVert}  
\providecommand{\pare}[1]{\left( #1 \right)}  
\providecommand{\braces}[1]{\left\{ #1 \right\}}  
\providecommand{\bracket}[1]{\left[ #1 \right]}  
\newcommand{\remiddle}{\mathrel{}\middle|\mathrel{}} 
\providecommand{\card}[1]{\mathrm{card}\pare{#1}}  
\providecommand{\inn}[2]{\langle #1, #2 \rangle}  
\title{On the Expectation of a Persistence Diagram \\ by the Persistence Weighted Kernel}
\author{Genki Kusano\thanks{Tohoku University (genki.kusano.r5@dc.tohoku.ac.jp).}}
\date{}
\begin{document}
\maketitle

%
%
%
%
%
%
%
%
%
\begin{abstract}
In topological data analysis, persistent homology characterizes robust topological features in data and it has a summary representation, called a persistence diagram.
Statistical research for persistence diagrams have been actively developed, and the persistence weighted kernel shows several advantages over other statistical methods for persistence diagrams.
If data is drawn from some probability distribution, the corresponding persistence diagram have randomness.
Then, the expectation of the persistence diagram by the persistence weighted kernel is well-defined.
In this paper, we study relationships between a probability distribution and the persistence weighted kernel in the viewpoint of 
(1) the strong law of large numbers and the central limit theorem, 
(2) a confidence interval to estimate the expectation of the persistence weighted kernel numerically, and 
(3) the stability theorem to ensure the continuity of the map from a probability distribution to the expectation.
In numerical experiments, we demonstrate our method gives an interesting counterexample to a common view in topological data analysis.
\end{abstract}

\section{Introduction}
{\em Topological data analysis} ({\em TDA}) is a research field utilizing topological properties in data analysis, and there are a lot of successful works in various research fields; for example, material science \cite{NHHEN15,HNHEMN16,STRFH17}, biochemistry \cite{CMWOXW15,GHIKMN13}, information science \cite{HK16,dSG07}, fluid dynamics \cite{KLTSXPSM16}, computer vision \cite{SOCG10}, and so on.
One of the key tools in TDA is {\em persistent homology}, which was first introduced in \cite{ELZ02} to describe robust topological features in data.

For explaining persistent homology intuitively, let us consider that input data is given as a point set.
This situation appears in many applications in TDA.
For example, by representing each atom in some material as its $3$-dimensional coordinates, the atomic configuration of the material is represented as a point set of $\lR^{3}$.
As another example, let $\mu$ be a probability measure on a measurable space $M$, then observations $\bm{x}_{1},\ldots,\bm{x}_{n}$ drawn from $\mu$ are also expressed as a point set $\{\bm{x}_{1},\ldots,\bm{x}_{n}\} \subset M$.
For a point set $X \subset \lR^{d}$, we consider an union of balls $B(X;a):=\bigcup_{\bm{x} \in X} B(\bm{x};a)$ with radius $a>0$, where $B(\bm{x};a)=\{\bm{z} \in \lR^{d} \mid \norm{\bm{x}-\bm{z}} \leq a\}$, and the homology group $H_{q}(B(X;a))$ in dimension $q$ with field coefficient.
Let $u^{b}_{a}:H_{q}(B(X;a)) \to H_{q}(B(X;b)) ~ (a \leq b) $ denote the induced map of the inclusion $X_{a} \hookrightarrow X_{b}$.
Here, we view a nonzero homology class $\alpha \in H_{q}(X;a)$ representing a $q$-dimensional topological hole in $B(X;a)$ and call it a {\em generator} of the persistent homology.
If $u^{b}_{a}(\alpha) \in H_{q}(B(X;b))$ is not zero, then it is interpreted that a generator $\alpha \in H_{q}(X;a)$ exists at radius $a$ and still {\em persists} at radius $b$. 
Here, the maximum and minimum of a radius $a$ at which a generator $\alpha \in H_{q}(B(X;a))$ persists are called the {\em birth time} and the {\em death time}, respectively.
The collection of all birth-death pairs of generators is a multiset of $\lR^{2}$ and called the {\em persistence diagram} (Figure \ref{fig:ball_pd}).
For a birth-death pair $x=(b,d)$ in a persistence diagram, its lifetime $d-b$ is called the {\em persistence} of $x$.
As Figure \ref{fig:ball_pd}, a generator with large (resp. small) persistence is interpreted to represent a large (resp. small) topological hole.
Hence, in many applications, generators with large persistence are viewed as important features in the input data, and generators with small persistence are treated as topological noise.
\begin{figure}[htbp]
\begin{center}
\caption{Unions of balls with several radii (left) and the persistence diagram in dimension $1$ (right). The persistence diagram encodes the birth and death time of each generator.}
\label{fig:ball_pd}
\includegraphics[width=1\textwidth]{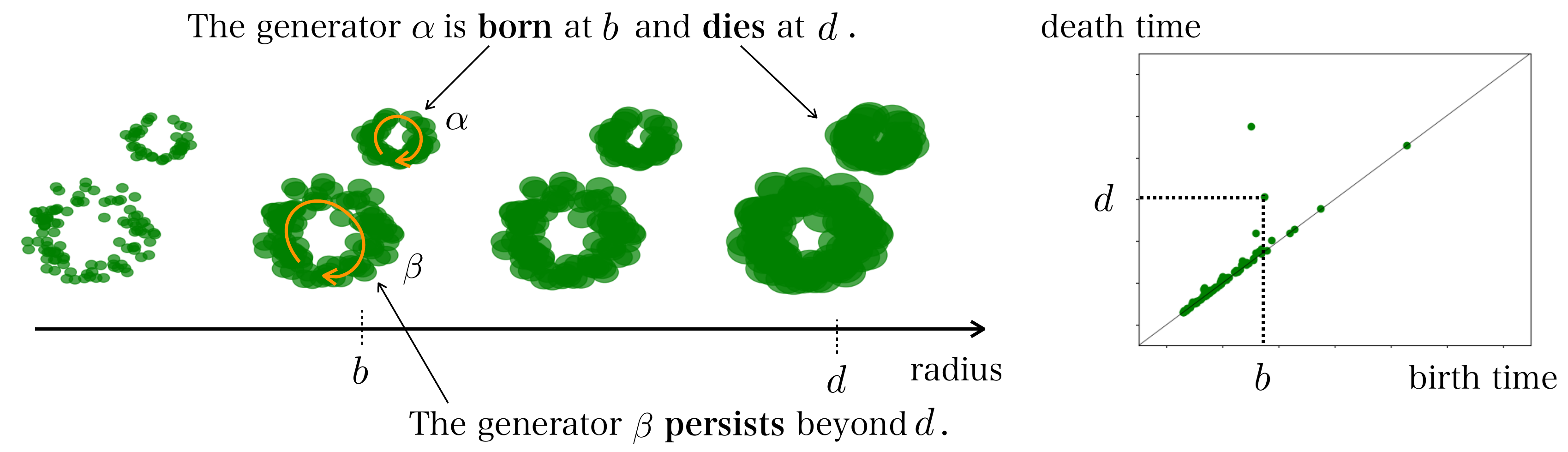}
\end{center}
\end{figure}

In applying a persistence diagram to various problems, it is desirable to analyze persistence diagrams statistically.
However, the definition of a persistence diagram as a multiset of $\lR^{2}$ is inappropriate for directly considering its statistical properties.
While standard statistical methods assume that the input data lies in a space with an inner product structure, a natural inner product for the space of persistence diagrams is not proposed.
This gap is an obstacle to developing statistical methods for persistence diagrams.
In order to avoid this obstacle, many research in statistical TDA consider to transform a persistence diagram to an element in a Hilbert space \cite{AEKNPSCHMZ17,Bu15,CMWOXW15,CCO17,DFL98,KFH18,RHBK15,RT16}.
In this paper, we focus on the {\em persistence weighted kernel} ({\em PWK}) \cite{KFH16,KFH18}, \footnote{This was originally called the persistence weighted {\em Gaussian} kernel in \cite{KFH16,KFH18} because we mainly focused on the Gaussian kernel $k(x,y)=\exp(- \norm{x-y}^{2}/2\sigma^{2}) ~ (\sigma>0)$ as the positive definite kernel, but the framework can be generalized to other positive definite kernels. Hence, we drop the word ``Gaussian'' here.} which is a recently developed statistical method for persistence diagrams.
The PWK is composed of a {\em positive definite kernel} $k:\lR^{2} \times \lR^{2} \to \lR$ and a {\em weight function} $w:\lR^{2} \to \lR$.
While a one-variable function $k(\cdot,x):\lR^{2} \to \lR, ~ z \mapsto k(z,x) ~ (x \in \lR^{2})$ is in a functional space on $\lR^{2}$, it is shown that $k(\cdot,x)$ is an element in a Hilbert space, which is called the {\em reproducing kernel Hilbert space} (RKHS) of $k$.
Here, we transform a persistence diagram $D=\{x_{i}=(b_{i},d_{i}) \in \lR^{2} \mid i \in I\}$ to a weighted sum $V^{k,w}(D):=\sum_{x \in D} w(x)k(\cdot,x)$ and call it the {\em PWK vector}.

\subsection*{Contributions}
In an appropriate condition, a persistence diagram $D$ can be viewed as a sample drawn from some probability distribution $P$.
Then, the PWK vector $V^{k,w}(D)$ is a random variable taking values in the RKHS, and the expectation $\lE[V^{k,w}(D)]$ is well-defined.
Contributions in this paper are summarized as follows:

(1) Let $D_{1},\ldots,D_{n}$ be i.i.d. samples drawn from $P$ and $\overline{V}^{k,w}_{n}:=n^{-1}\sum_{i=1}^{n}V^{k,w}(D_{i})$ be the sample mean.
In order to understand probabilistic properties of the PWK vector, we will describe the convergence of $\overline{V}^{k,w}_{n}$ to $\lE[V^{k,w}(D)]$ by showing the strong law of large numbers (Theorem \ref{thm:SLNN_pwk}) and the central limit theorem for PWK vectors (Theorem \ref{thm:CLT_pwk}).

(2) From the strong law of large numbers for PWK vectors, we have the almost sure convergence $\overline{V}^{k,w}_{n} \convas \lE[V^{k,w}(D)]$ as $n$ goes to $\infty$.
In practice, the number of samples $n$ is finite.
Then, we are interested in how $\overline{V}^{k,w}_{n}$ for the fixed number $n$ is close to $\lE[V^{k,w}(D)]$.
We formalize this question in the context of a confidence interval.
A {\em confidence interval} for a parameter $\theta \in \lR$ is an interval which contains the parameter $\theta$ with high probability.
Since a PWK vector $V^{k,w}(D):\lR^{2} \to \lR, z \mapsto \sum_{x \in D}w(x)k(z,x)$ is a real-valued function on $\lR^{2}$, by regarding $\lE[V^{k,w}(D)]$ as a real-valued function on $\lR^{2}$, we construct the confidence interval for $\lE[V^{k,w}(D)]$ (Theorem \ref{thm:confi_interval_pwk}).

(3) When the sample mean $\overline{V}^{k,w}_{n}$ is viewed as the expectation of the empirical distribution of $D_{1},\ldots,D_{n}$, the difference between two expectations of a (true) distribution $P$ and its empirical distribution is estimated to be small from the strong law of large numbers.
Then, these probability distributions are also considered to be close with an appropriate distance on probability distributions when $n$ is large.
We will generalize this by showing that the map from a probability distribution of persistence diagrams to the expectation of a PWK vector is Lipschitz continuous (Theorem \ref{thm:stab_prob_pwk}), which is the stability of the expectation of a PWK vector.

\subsection*{Related work}
A first study of vectorizing a persistence diagram is a persistence landscape \cite{Bu15}, which is an $L^{p}(\lN \times \lR)$ function made from a persistence diagram, and \cite{Bu15} shows the strong law of large numbers and the central limit theorem for a persistence landscape.
With respect to other topics, there are previous works of a uniform confidence band for a persistence landscape \cite{CFLRSW13,CFLRW14} and the stability of the expectation of a persistence landscape \cite{CFLMRW15}.
We remark that we have used several techniques in these papers to show our results.
On the other hand, a PWK vector can flexibly measure a size of a generator by a weight function $w$, while several statistical methods for persistence diagrams, including persistence landscapes, always return a large (resp. small) value for a generator with large (resp. small) persistence.

In numerical experiments, we compare the PWK vector and other statistical methods for persistence diagrams: the persistence landscape, the persistence scale-space kernel \cite{Bu15}, and the Sliced Wasserstein kernel \cite{CCO17}.
While the persistence landscape and the Sliced Wasserstein kernel are parameter-free statistical methods, it is needed to select a positive definite kernel $k$ and a weight function $w$ for a PWK vector appropriately.
In other words, depending on datasets, we can select $k$ and $w$ flexibly to show higher performance of the PWK than other methods in statistical and machine learning tasks. 
One of the datasets which we will treat in this paper is related to material science, and it has important topological features whose persistence are small.
Utilizing the weight factor in a PWK vector, we will show the advantage of our statistical method.

\subsection*{Organization}
This paper is organized as follows: 
We review some the basics on persistence diagrams and a PWK vector in Section \ref{sec:preliminaries} and explain how to regard the PWK vector as a random variable taking values in the RKHS in Section \ref{sec:limit_theorems}.
Then, we show the limit theorems in Section \ref{sec:limit_theorems}, construct a confidence interval for the expectation of a PWK vector in Section \ref{sec:confidence}, and show the stability of the expectation of a PWK vector in Section \ref{sec:stability_expectation}.
In Section \ref{sec:numarical}, we show numerical results by using the PWK vector and compare performances with other statistical methods for persistence diagrams.

\section{Preliminaries}
\label{sec:preliminaries}

\subsection{Persistence Diagram}
\label{subsec:pd}
In this section, we briefly review the basics of persistent homology and persistence diagrams.
Throughout this section, we fix $\field$ a field.
We refer the reader to \cite{Ha02} for homology groups and \cite{ZC05} for persistence diagrams.

Let $\Filt=\{F_a\}_{a\in \lR}$ be a family of topological spaces.
If $F_{a}$ is a subspace of $F_{b}$ for any $a\leq b \in \lR$, $\Filt$ is called a {\em filtration} of topological spaces. 
Then, a family of the $q$-th homology groups $\{H_{q}(F_{a})\}_{a\in \lR}$ with coefficient $\field$ is a family of $\field$-vector spaces.
Since the inclusion $F_{a} \subset F_{b}$ $(a \leq b \in \lR)$ induces the $\field$-linear map $\phi^{b}_{a}:H_{q}(F_{a}) \to H_{q}(F_{b})$, there exists a family $\{ \phi^{b}_{a}\}_{a \leq b \in \lR}$ of $\field$-linear maps.
Then, the collection of $\{H_{q}(F_{a})\}_{a\in \lR}$ and $\{ \phi^{b}_{a}\}_{a \leq b \in \lR}$ is called the {\em $q$-th} {\em persistent homology} of $\Filt$ and denoted by $H_{q}(\Filt):=\{\{H_{q}(F_{a})\}_{a\in \lR}, \{\phi^{b}_{a}\}_{a\leq b \in \lR}\}$.

A persistent homology is generalized as follows:
Let $\{U_{a}\}_{a\in\lR}$ be a family of $\field$-vector spaces and $\{u^{b}_{a}:U_{a} \to U_{b}\}_{a\leq b \in \lR}$ be a family of $\field$-linear maps.
If $u^{a}_{a}$ is an identity map on $U_{a}$ and $u^{c}_{a}=u^{c}_{b} \circ u^{b}_{a}$ for any $a \leq b \leq c \in \lR$, the collection $\lU=\{\{U_{a}\}_{a\in \lR}, \{u^{b}_{a}\}_{a\leq b\in  \lR}\}$ of $\field$-vector spaces and $\field$-linear maps is called a {\em persistence module}.
For an interval\footnote{A subset $\bJ \subset \lR$ is said to be an {\em interval} if, for any $a,c \in \bJ$, $b \in \lR$ satisfying $a < b < c$ is in $\bJ$.} $\bJ \subset \lR$, the persistence module $\lI_{\bJ}=\{\{U_{a}\}_{a\in \lR}, \{u^{b}_{a}\}_{a\leq b \in \lR}\}$ is called the {\em interval persistence module} over $\bJ$ if $U_{a}=\field$ for $a \in \bJ$ and $U_{a}=0$ otherwise, and $u^{b}_{a}$ is an identity map for any $a \leq b \in \bJ$ and $u^{b}_{a}$ is a zero map otherwise.
It is known that an appropriate persistence module is decomposed to several interval persistence modules as $\lU \cong \bigoplus_{i \in L} \lI_{\bJ_{i}}$, where $L$ is an index set and each $\bJ_{i}$ is an interval of $\lR$.
We refer the reader to \cite{BS14,CdS10,CrB15,ZC05} for the details about the appropriate condition and the symbol $\cong$ and $\bigoplus$.
We remark that all persistence modules which will be used in this paper satisfies the decomposition condition.

For an interval $\bJ \subset \lR$, we define the {\em birth time} and {\em death time} of $\bJ$ by the endpoints $b(\bJ):=\inf \{b \mid b \in \bJ \}$ and $d(\bJ):=\sup \{d \mid d \in \bJ\}$, respectively.
The lifetime $d(\bJ) - b(\bJ)$ is called the {\em persistence} of the birth-death pair $x=(b(\bJ), d(\bJ))$ and denoted by $\pers(x):=d(\bJ) - b(\bJ)$.
For an interval decomposable persistence module $\lU \cong \bigoplus_{i \in L} \lI_{\bJ_{i}}$, the {\em persistence diagram} of $\lU$ is defined by a multiset\footnote{A {\em multiset} is a set with multiplicity of each point. 
Note that the collection of birth-death pairs should be a multiset because an interval decomposition of $\lU$ can contain several intervals with the same birth-death pairs.} composed of all birth-death pairs in $\lU$ and denoted by
\[
D(\lU)=\braces{ (b(\bJ_{i}),d(\bJ_{i})) \remiddle i \in L}.
\]
For the persistent homology $H_{q}(\Filt)$ of a filtration $\Filt$, the persistence diagram $D(H_{q}(\Filt))$ is denoted by $D_{q}(\Filt)$ for short. 
For a finite point set $X$ in a metric space $(M, d_{M})$, we define the {\em ball model filtration} $\lB(X)=\{B(X;a):=\bigcup_{\bm{x} \in X}B(\bm{x};a)\}_{a \in \lR}$, where $B(\bm{x};a)=\{\bm{z} \in M \mid d_{M}(\bm{x}, \bm{y}) \leq a\}$ $(a \geq 0)$ and $B(\bm{x};a)=\emptyset$ otherwise.
Figure \ref{fig:ball_pd} shows the persistence diagram $D_{1}(\lB(X))$.

Since each death time is greater than or equal to the corresponding birth time, all birth-death pairs of $D(\lU)$ lie on the region {\em above the diagonal} $\ad:=\{(b,d) \in \lR^{2} \mid b \leq d\}$.\footnote{Precisely speaking, the definition of the birth and death time can contain $-\infty$ and $\infty$.
However, in practical, we can assume that all birth and death times take neither $\infty$ nor $-\infty$ (for more details, please see Section 2.1.2 in \cite{KFH18}).}
For this reason, we define a {\em generalized persistence diagram} by a countable multiset of points in $\ad$, and the set of all generalized persistence diagrams is denoted by $\cD^{g}$.

Let $\DD:=\{(a,a) \mid a \in \lR\}$ denote the diagonal set with all points having infinite multiplicity.
As a distance between persistence diagrams $D$ and $E$, we use the {\em bottleneck distance} which is defined by 
\[
\dW{\infty}(D,E)=\inf_{\gamma} \sup_{x \in D \cup \DD} \norm{x-\gamma(x)}_{\infty},
\]
where $\gamma:D \cup \DD \to E \cup \DD$ is a multi-bijection.\footnote{By considering infinite multiplicity of the diagonal set $\DD$, there always exists a multi-bijection from $D \cup \DD$ to $E \cup \DD$. The bottleneck distance is also called $\infty$-Wasserstein distance.}
When we define the set of persistence diagram as $\cD_{\infty}=\{D \in \cD^{g}\mid \dW{\infty}(D,\DD)<\infty\}$, then $(\cD_{\infty},\dW{\infty})$ becomes a pseudometric space.

For persistence diagrams $D$ and $E$, we define an equivalence $D \sim E$ if $\dW{\infty}(D,E)=0$, and then the difference in the equivalence class is only on the diagonal set, which can be ignored in data analysis.
Thus, we abuse the notation and let $(\cD_{\infty},\dW{\infty})$ also denote the metric space of equivalence classes under $\sim$.

From now on, let $\cD$ be a subspace in $\cD_{\infty}$.
In order to develop statistical analysis for persistence diagrams, it is convenient to focus on not $\cD_{\infty}$ but a subspace $\cD$ in $\cD_{\infty}$.
For example, when we analyze finite point sets in a metric space $(M, d_{M})$ and compute their persistence diagrams via the ball model filtration, all resulting persistence diagrams are in 
\[
\cD(\lB(M)):=\{D_{q}(\lB(X)) \mid X \mbox{ is a finite point set in } M, ~ q \in \lN \cup \{0\}\},
\]
and $\cD(\lB(M))$ is a subspace in $\cD_{\infty}$.
Let $\cF(M)$ denote the set of finite point sets in $M$, then $\cF(M)$ is a metric space with the {\em Hausdorff distance}, which is defined by 
\[
\dH(X,Y):= \max \braces{ \sup_{\bm{x} \in X} \Inf_{\bm{y} \in Y} d_{M}(\bm{x},\bm{y}), \sup_{\bm{y} \in Y} \Inf_{\bm{x} \in X} d_{M}(\bm{x},\bm{y}) } ~ (X,Y \in \cF(M)).
\]
As an important property for a persistence diagram, it is shown that the map $\phi:(\cF(M), \dH) \to (\cD(\lB(M)), \dW{\infty}), X \mapsto D_{q}(\lB(X))$ is Lipschitz continuous, which is known as the {\em stability of persistence diagrams}:
\begin{theorem}[\cite{CdSO14,CEH07}]
\label{thm:stability_ball}
For any two finite point sets $X$ and $Y$ in a metric space $M$, we have
\[
\dW{\infty}(D_{q}(\lB(X)),D_{q}(\lB(Y))) \leq \dH(X,Y).
\]
\end{theorem}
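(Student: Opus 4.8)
The plan is to reduce the statement to the algebraic stability (isometry) theorem for persistence modules, after checking that the two ball model filtrations are interleaved with parameter $\ee := \dH(X,Y)$. The whole argument factors through the homology functor: geometric inclusions of the underlying spaces become $\field$-linear maps between homology groups, and the combinatorics of the Hausdorff distance translates directly into the shift parameter of an interleaving.

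First I would unwind the hypothesis $\dH(X,Y) = \ee$. For every $\bm{x} \in X$ there is some $\bm{y} \in Y$ with $d_{M}(\bm{x},\bm{y}) \leq \ee$, so the triangle inequality gives $B(\bm{x};a) \subseteq B(\bm{y};a+\ee)$ for every radius $a$; taking the union over $\bm{x} \in X$ yields $B(X;a) \subseteq B(Y;a+\ee)$. By symmetry $B(Y;a) \subseteq B(X;a+\ee)$. Chaining these inclusions shows that the composite $B(X;a) \hra B(Y;a+\ee) \hra B(X;a+2\ee)$ is exactly the inclusion $B(X;a) \hra B(X;a+2\ee)$, and likewise with the roles of $X$ and $Y$ exchanged.

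Next I would apply $H_{q}(-)$ to this commuting diagram of inclusions. Functoriality turns the inclusions into $\field$-linear maps $\varphi_{a} : H_{q}(B(X;a)) \to H_{q}(B(Y;a+\ee))$ and $\psi_{a} : H_{q}(B(Y;a)) \to H_{q}(B(X;a+\ee))$ that commute with all the structure maps $\phi^{b}_{a}$, and the composites $\psi_{a+\ee}\circ\varphi_{a}$ and $\varphi_{a+\ee}\circ\psi_{a}$ equal the respective internal maps of shift $2\ee$. By definition this is precisely an $\ee$-interleaving of the persistence modules $H_{q}(\lB(X))$ and $H_{q}(\lB(Y))$. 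Since these modules are interval decomposable (as noted in the preliminaries), the persistence diagrams $D_{q}(\lB(X))$ and $D_{q}(\lB(Y))$ are well-defined, and the algebraic stability theorem of \cite{CdSO14,CEH07} converts the $\ee$-interleaving into the bound $\dW{\infty}(D_{q}(\lB(X)),D_{q}(\lB(Y))) \leq \ee = \dH(X,Y)$.

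The main obstacle is the algebraic stability theorem itself --- producing a multi-bijection that moves every birth--death pair by at most $\ee$ in the sup-norm. Rather than reprove it, I would cite it, as it is the deep half of the classical stability result; the geometric half carried out above is essentially bookkeeping with the triangle inequality. The one point requiring care is verifying that the interleaving maps are genuinely compatible (the four commuting squares and triangles), which is immediate here only because every map in sight is induced by an honest inclusion of spaces, so all the relevant diagrams already commute on the nose before $H_{q}$ is applied.
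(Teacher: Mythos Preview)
Your argument is correct and is exactly the standard route to this result: the Hausdorff bound gives containments $B(X;a)\subseteq B(Y;a+\ee)$ and $B(Y;a)\subseteq B(X;a+\ee)$, functoriality of $H_q$ turns these into an $\ee$-interleaving of the persistence modules, and the algebraic stability/isometry theorem converts the interleaving into a bottleneck bound. The paper itself does not supply a proof of this theorem at all---it is merely quoted from \cite{CdSO14,CEH07} as a known result---so there is nothing to compare against beyond noting that your sketch is precisely the proof strategy of those references.
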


\subsection{Persistence Weighted Kernel}
\label{subsec:pwgk}
In this section, we review the basics of a positive definite kernel and the PWK vector.
We refer the reader to \cite{BTA11,PR16} for the kernel method, which is the statistical theory of a positive definite kernel, and \cite{KFH18} for the PWK vector.

Let $\cX$ be a set.
A function $k:\cX \times \cX \to \lR$ is called a {\em positive definite kernel} on $\cX$ if $k$ is symmetric, i.e., $k(x,y)=k(y,x)$ for any $x,y \in \cX$, and the matrix $(k(x_{i},x_{j}))_{i,j=1,\ldots,\ell}$ is a nonnegative definite matrix for any finite number of $x_{1},\ldots,x_{\ell} \in \cX$.
It is known from the Moore-Aronszajn theorem that a positive definite kernel $k$ uniquely defines a Hilbert space $\cH_{k}$ as a subspace of a real-valued functional space on $\cX$, which is called the {\em reproducing kernel Hilbert space} (RKHS).
\begin{theorem}[Moore-Aronszajn, Theorem 2.14 in \cite{PR16}]
\label{thm:moore}
Let $\cX$ be a set and $k$ be a positive definite kernel on $\cX$.
Then, there uniquely exists a reproducing kernel Hilbert space $\cH_{k}$ satisfying the following:
\begin{description}
\item[(1)] $k(\cdot,x) \in \cH_{k}$ for any $x \in \cX$,
\item[(2)] $\Span \braces{ k(\cdot,x) \remiddle x \in \cX}$ is dense in $\cH_{k}$ with the uniform norm $\norm{\cdot}_{\infty}$,
\item[(3)] $\inn{f}{k(\cdot,x)}_{\cH_{k}}=f(x)$ for any $x \in \cX$ and any $f \in \cH_{k}$.
\end{description}
\end{theorem}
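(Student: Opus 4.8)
The plan is to carry out the standard two-stage construction: first build an explicit pre-Hilbert space out of the kernel sections and equip it with the inner product that is \emph{forced} by the reproducing property, then complete it while keeping enough control of pointwise values so that the completion is again an honest space of functions on $\cX$. The reproducing property is the organizing principle throughout: it both pins down the inner product and supplies the continuity of evaluation that makes the completion work.

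First I would set $\cH_{0} := \Span\braces{ k(\cdot,x) \remiddle x \in \cX}$ and define a bilinear form on $\cH_{0}$ by declaring $\inn{k(\cdot,x)}{k(\cdot,y)} := k(x,y)$ and extending bilinearly, so that $\inn{\sum_{i} a_{i} k(\cdot,x_{i})}{\sum_{j} b_{j} k(\cdot,y_{j})} = \sum_{i,j} a_{i} b_{j} k(x_{i},y_{j})$. Symmetry of $k$ makes the form symmetric, and nonnegative-definiteness of the Gram matrices $(k(x_{i},x_{j}))$ makes it positive semidefinite. A direct computation yields the reproducing identity $\inn{f}{k(\cdot,x)} = f(x)$ for every $f \in \cH_{0}$; rewriting $\inn{f}{g}$ as $\sum_{j} b_{j} f(y_{j})$ shows the form depends only on $f$ and $g$ as functions, settling well-definedness. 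Cauchy--Schwarz for the semidefinite form then gives the evaluation bound $\abs{f(x)} = \abs{\inn{f}{k(\cdot,x)}} \leq \norm{f}\sqrt{k(x,x)}$, so $\norm{f}=0$ forces $f\equiv 0$; hence the form is genuinely an inner product and $\cH_{0}$ is a pre-Hilbert space.

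Next I would complete $\cH_{0}$ concretely rather than invoke an abstract completion. If $(f_{n})$ is Cauchy in $\cH_{0}$, the evaluation bound $\abs{f_{n}(x)-f_{m}(x)} \leq \norm{f_{n}-f_{m}}\sqrt{k(x,x)}$ shows $(f_{n}(x))_{n}$ is Cauchy in $\lR$ for each fixed $x$, hence converges to a value $f(x)$. I would define $\cH_{k}$ as the set of all functions so obtained, transport the inner product by $\inn{f}{g}=\lim_{n}\inn{f_{n}}{g_{n}}$, and check this is well-defined and complete. Property (1) and the density in (2) are then immediate, and extending the reproducing identity from $\cH_{0}$ to $\cH_{k}$ by continuity (using $k(\cdot,x)\in\cH_{0}\subset\cH_{k}$) gives (3). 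Where $k$ is bounded on the diagonal, the evaluation bound upgrades norm convergence to convergence in $\norm{\cdot}_{\infty}$, matching the density asserted in (2).

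The main obstacle I expect is exactly this concrete completion step: one must verify that the pointwise-limit assignment descends to an \emph{injection} on equivalence classes of Cauchy sequences, so that two $\norm{\cdot}$-distinct elements of the completion never collapse to the same function on $\cX$. This injectivity is precisely what the evaluation bound buys, and it is the reason the completion is again a reproducing kernel Hilbert space of functions rather than an abstract Hilbert space. For uniqueness I would argue that any $\cH'$ satisfying (1)--(3) must contain $\cH_{0}$ carrying the inner product forced by (3), hence contains its completion $\cH_{k}$ isometrically; the density condition (2) then forbids $\cH'$ from being strictly larger, so $\cH'=\cH_{k}$ as function spaces with identical inner products.
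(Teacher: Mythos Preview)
The paper does not prove this theorem; it merely cites it as Theorem~2.14 of \cite{PR16} and uses it as background. There is therefore no in-paper proof to compare against. Your construction is the standard Moore--Aronszajn argument (build the pre-Hilbert space on kernel sections, use the reproducing identity and Cauchy--Schwarz to get the evaluation bound, complete, and check uniqueness via density), and it is correct as written.

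One point worth flagging: property~(2) in the paper's statement asserts density in the \emph{uniform} norm $\norm{\cdot}_{\infty}$ rather than the RKHS norm, which is a nonstandard formulation. Your evaluation bound $\abs{f(x)} \leq \norm{f}_{\cH_{k}}\sqrt{k(x,x)}$ yields uniform-norm density from RKHS-norm density only when $\sup_{x}k(x,x)<\infty$, and you correctly guard your claim with this proviso. The theorem as literally stated, however, assumes only that $k$ is positive definite, with no boundedness hypothesis; under that bare assumption the uniform-norm density in (2) can fail (e.g.\ unbounded kernels on $\lR$). In the paper this is harmless because every kernel actually used lies in $\Kc$ and is bounded, but strictly speaking the statement of (2) is stronger than what the general Moore--Aronszajn theorem delivers, and your proof does not (and cannot) establish it without the extra boundedness hypothesis.
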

We remark that the inner product of $k(\cdot,x)$ and $k(\cdot,y)$ is given by $\inn{k(\cdot,y)}{k(\cdot,x)}_{\cH_{k}}=k(x,y)$ from the property (3) in Theorem \ref{thm:moore}, and the norm of $k(\cdot,x)$ is given by $\norm{k(\cdot,x)}_{\cH_{k}}=\sqrt{k(x,x)}$.
For later use, we remark that the following inequality holds for any $f,g \in \cH_{k}$:
\begin{align}
\norm{f-g}_{\infty}
&=\sup_{z \in \cX} \abs{f(z) -g(z)} \nonumber \\
&=\sup_{z \in \cX} \abs{\inn{f-g}{k(\cdot,z)}_{\cH_{k}}}\nonumber \\
&\leq\sup_{z \in \cX} \norm{k(\cdot,z)}_{\cH_{k}} \norm{f-g}_{\cH_{k}}\nonumber \\
&=\sup_{z \in \cX} \sqrt{k(z,z)} \norm{f-g}_{\cH_{k}} \label{eq:kernel_ineq}
\end{align}

We call a positive definite kernel $k$ on $\lR^{2}$ {\em bounded} if there exists a constant $\Bdd(k)>0$ such that $\sup_{x,y \in \lR^{2}} \abs{k(x,y)} \leq \Bdd(k)$, and {\em Lipschitz continuous} if there exists a constant $\Lip(k)>0$ such that 
\begin{align*}
\norm{k(\cdot,x)-k(\cdot,y)}_{\cH_{k}} \leq \Lip(k) \norm{x-y}_{\infty} \mbox{ for any } x,y \in \lR^{2}.
\end{align*}
The following proposition is not used here, but will be used in Section \ref{sec:limit_theorems}.
\begin{proposition}
\label{prop:separable}
A reproducing kernel Hilbert space of a bounded Lipschitz continuous positive definite kernel $k$ on $\lR^{2}$ is separable.
\end{proposition}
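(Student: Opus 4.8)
The plan is to produce an explicit countable subset of $\cH_{k}$ and show it is dense, so that $\cH_{k}$ is separable by definition. By construction $\cH_{k}$ is the completion of the $\lR$-linear span $\Span\braces{ k(\cdot,x) \remiddle x \in \lR^{2}}$ under the norm $\norm{\cdot}_{\cH_{k}}$ (Moore--Aronszajn, Theorem \ref{thm:moore}), so this span is dense in $\cH_{k}$ with respect to the RKHS norm, and it suffices to approximate each finite combination $\sum_{i=1}^{n} a_{i} k(\cdot, x_{i})$ by members of a fixed countable family. There are exactly two sources of uncountability to discretize, namely the base points $x_{i} \in \lR^{2}$ and the real coefficients $a_{i}$, so the natural candidate is
\[
C := \braces{ \sum_{i=1}^{n} c_{i} k(\cdot, q_{i}) \remiddle n \in \lN, ~ c_{i} \in \lQ, ~ q_{i} \in \lQ^{2} },
\]
which is a countable union of countable sets and hence countable.

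First I would record that the feature map $\Phi:\lR^{2} \to \cH_{k}, ~ x \mapsto k(\cdot,x)$ is continuous; this is exactly the Lipschitz hypothesis, giving $\norm{\Phi(x) - \Phi(y)}_{\cH_{k}} = \norm{k(\cdot,x) - k(\cdot,y)}_{\cH_{k}} \le \Lip(k)\norm{x-y}_{\infty}$. Since $\lQ^{2}$ is dense in $\lR^{2}$, it follows that for every $x \in \lR^{2}$ and every $\ee > 0$ there is a rational point $q \in \lQ^{2}$ with $\norm{k(\cdot,x)-k(\cdot,q)}_{\cH_{k}} < \ee$.

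Next I would carry out the two-step approximation. Fixing $f = \sum_{i=1}^{n} a_{i} k(\cdot, x_{i})$ and $\ee>0$, the triangle inequality yields
\[
\norm{ \sum_{i=1}^{n} a_{i} k(\cdot,x_{i}) - \sum_{i=1}^{n} c_{i} k(\cdot,q_{i}) }_{\cH_{k}} \le \sum_{i=1}^{n} \pare{ \abs{a_{i}} \, \norm{k(\cdot,x_{i})-k(\cdot,q_{i})}_{\cH_{k}} + \abs{a_{i}-c_{i}} \, \norm{k(\cdot,q_{i})}_{\cH_{k}} }.
\]
In each summand the first term is driven below $\ee/(2n)$ by choosing $q_{i} \in \lQ^{2}$ close to $x_{i}$ via the Lipschitz bound, and the second is driven below $\ee/(2n)$ by choosing $c_{i}\in\lQ$ close to $a_{i}$, where the boundedness hypothesis supplies the uniform control $\norm{k(\cdot,q_{i})}_{\cH_{k}} = \sqrt{k(q_{i},q_{i})} \le \sqrt{\Bdd(k)}$. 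Hence $f$ lies in the closure of $C$; since such $f$ are dense in $\cH_{k}$ and $C$ is countable, $\cH_{k}$ is separable.

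I do not expect a genuine obstacle here: the argument is a discretization in which the two hypotheses enter transparently, with Lipschitz continuity discretizing the base points and boundedness uniformly bounding the kernel norms $\norm{k(\cdot,q)}_{\cH_{k}}$ that multiply the coefficient errors. The only point needing care is keeping the two approximations coupled correctly, so that the coefficient-rounding error is weighted by an a priori bounded factor rather than by an uncontrolled $\norm{k(\cdot,q_{i})}_{\cH_{k}}$; the uniform bound $\sqrt{\Bdd(k)}$ is precisely what makes this clean and lets the total error split into $n$ summands that are each handled independently.
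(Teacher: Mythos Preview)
Your proof is correct. It shares the paper's core idea---use continuity of the feature map $x\mapsto k(\cdot,x)$ to transfer the separability of $\lR^{2}$ into separability of $\cH_{k}$ through the density of the span---but the executions differ in which norm carries the argument. You work entirely in the RKHS norm: the Lipschitz hypothesis handles the base-point discretization, and boundedness enters separately to control $\norm{k(\cdot,q_{i})}_{\cH_{k}}\le\sqrt{\Bdd(k)}$ in the coefficient-rounding step. The paper instead routes through the uniform norm $\norm{\cdot}_{\infty}$, first combining both hypotheses into the single estimate $\norm{k(\cdot,x)-k(\cdot,y)}_{\infty}\le\sqrt{\Bdd(k)}\,\Lip(k)\,\norm{x-y}_{\infty}$ via inequality~\eqref{eq:kernel_ineq}, and then invoking its statement of Moore--Aronszajn (density of the span under $\norm{\cdot}_{\infty}$). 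Your route is the more direct one for the purpose at hand: it yields separability in the RKHS norm itself, which is exactly the Banach-space separability required to apply Theorems~\ref{thm:SLNN_banach} and~\ref{thm:CLT_banach}, whereas the paper's argument, read literally, concludes separability only in the weaker $\norm{\cdot}_{\infty}$ topology.
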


\begin{proof}
For any $x, y \in \lR^{2}$, we have
\begin{align*}
\norm{k(\cdot,x) -k(\cdot,y)}_{\infty}
&=\sup_{z \in \lR^{2}} \sqrt{k(z,z)} \norm{k(\cdot,x) -k(\cdot,y)}_{\cH_{k}} ~~ (\mbox{Equation \eqref{eq:kernel_ineq}}) \\
&\leq \sqrt{\Bdd(k)} \Lip(k) \norm{x-y}_{\infty} \\
&\leq \sqrt{\Bdd(k)} \Lip(k) \norm{x-y}~~ (\norm{x}_{\infty} \leq \norm{x} \mbox{ for any } x \in \lR^{2}).
\end{align*}
When we use a metric derived from the uniform norm $\norm{\cdot}_{\infty}$ as a metric of $\cH_{k}$, the map $x \in \lR^{2} \mapsto k(\cdot,x) \in \cH_{k}$ is Lipschitz continuous from the above inequality.
Since $\lR^{2}$ is separable, $\Span \{k(\cdot,x) \mid x \in \lR^{2}\}$ is separable from the Lipschitz continuity of $k(\cdot,x)$, and hence $\cH_{k}$ is also separable because $\Span \{k(\cdot,x) \mid x \in \lR^{2}\}$ is dense in $\cH_{k}$ with the uniform norm from Theorem \ref{thm:moore}.
\qed
\end{proof}

Let $\cD$ be a subspace in $\cD_{\infty}$.
We define a {\em weight function} $w:\lR^{2} \to \lR$ for $\cD$ if it satisfies $w(x) = 0$ for any $x \in \DD$ and there exists a constant $\Bdd(w;\cD)>0$ such that $\sup_{D \in \cD}\sum_{x \in D} \abs{w(x)} \leq \Bdd(w;\cD)$.
If $k$ is a measurable bounded positive definite kernel on $\lR^{2}$ and $w$ is a weight function for $\cD$, for a persistence diagram $D \in \cD$, the weighted sum
\[
V^{k,w}(D):= \sum_{x \in D}w(x)k(\cdot, x)
\]
is an element in the RKHS $\cH_{k}$.
We call $V^{k,w}(D)$ the {\em persistence weighted kernel vector} ({\em PWK vector}) \cite{KFH16,KFH18} of $D$ by $k$ and $w$.
We remark that the inner product is given by
\begin{align}
\inn{V^{k,w}(D)}{V^{k,w}(E)}_{\cH_{k}}=\sum_{x \in D}\sum_{y \in E}w(x)w(y)k(x,y) ~~ (D,E \in \cD). \label{eq:inner_pwgk}
\end{align}
Furthermore, for any $D \in \cD$, we have
\begin{align}
\norm{V^{k,w}(D)}_{\cH_{k}}^{2}
\leq \Bdd(k) \pare{ \sum_{x \in D} w(x)}^{2}
\leq \Bdd(k) \Bdd(w;\cD)^{2}, \label{eq:norm_pwk}
\end{align}
and thus the norm of the PWK vector is always finite.

Here, we define classes for positive definite kernels and weight functions as follows:
\[
\Kc:=\{k\mbox{: measurable positive definite kernel on } \lR^{2} \mid {}^{\exists} \Bdd(k)>0, ~ {}^{\exists} \Lip(k)>0\}.
\]
Let $w$ be a weight function for $\cD$.
The class of weight functions such that there exists a constant $L_{\infty}(w;\cD)>0$ satisfying 
\begin{align}
\sum_{x \in D \cup \DD} \abs{w(x)-w(\gamma(x))} \leq L_{\infty}(w;\cD)\sup_{x \in D \cup \DD} \norm{x- \gamma(x)}_{\infty} \label{eq:weight}
\end{align}
for any $D,E \in \cD$ and any multi-bijection $\gamma:D \cup \DD \to E \cup \DD$ is denoted by
\[
\cW_{\infty}(\cD):=\{w: \mbox{weight function for } \cD \mid {}^{\exists} L_{\infty}(w;\cD)>0 ~ \mbox{ satisfying \eqref{eq:weight}}\}
\]Then, the stability theorem for a PWK vector is shown as follows:
\begin{theorem}[\cite{KFH18}]
\label{thm:kernel_stability}
Let $k \in \Kc$ and $w \in \cW_{\infty}(\cD)$.
Then, for any persistence diagrams $D, E \in \cD$, we have
\[
\norm{V^{k,w}(D)-V^{k,w}(E)}_{\cH_{k}} \leq (\Lip(k)\Bdd(w;\cD)+\sqrt{\Bdd(k)}L_{\infty}(w;\cD)) \dW{\infty}(D,E).
\]
\end{theorem}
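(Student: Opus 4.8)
The plan is to reduce the bound to the two structural constants $\Lip(k)$ and $L_{\infty}(w;\cD)$ by means of a single multi-bijection and the usual add-and-subtract trick. First I would fix an arbitrary multi-bijection $\gamma: D \cup \DD \to E \cup \DD$ of finite cost $\sup_{x} \norm{x - \gamma(x)}_{\infty}$ (the asserted inequality being vacuous otherwise, and such $\gamma$ existing since $D,E \in \cD \subseteq \cD_{\infty}$). Because $w$ vanishes on the diagonal $\DD$, padding by zero-weight diagonal points leaves the PWK vectors unchanged, so I may write $V^{k,w}(D) = \sum_{x \in D \cup \DD} w(x) k(\cdot, x)$ and, reindexing the sum for $E$ through $\gamma$, $V^{k,w}(E) = \sum_{x \in D \cup \DD} w(\gamma(x)) k(\cdot, \gamma(x))$; both series converge absolutely in $\cH_{k}$ since $\norm{k(\cdot,x)}_{\cH_{k}} = \sqrt{k(x,x)} \leq \sqrt{\Bdd(k)}$ and $\sum_{x \in D} \abs{w(x)} \leq \Bdd(w;\cD)$.

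Subtracting termwise, I would split each summand as
\[
w(x) k(\cdot,x) - w(\gamma(x)) k(\cdot,\gamma(x)) = w(x)\bigl(k(\cdot,x) - k(\cdot,\gamma(x))\bigr) + \bigl(w(x) - w(\gamma(x))\bigr) k(\cdot,\gamma(x)),
\]
so that the triangle inequality in $\cH_{k}$ bounds $\norm{V^{k,w}(D) - V^{k,w}(E)}_{\cH_{k}}$ by the sum of two series. The first is controlled by the Lipschitz property of $k$, namely $\norm{k(\cdot,x) - k(\cdot,\gamma(x))}_{\cH_{k}} \leq \Lip(k) \norm{x-\gamma(x)}_{\infty}$, combined with $\sum_{x} \abs{w(x)} \leq \Bdd(w;\cD)$, giving the contribution $\Lip(k)\Bdd(w;\cD) \sup_{x} \norm{x-\gamma(x)}_{\infty}$. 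The second is controlled by the boundedness $\norm{k(\cdot,\gamma(x))}_{\cH_{k}} \leq \sqrt{\Bdd(k)}$ together with the defining inequality \eqref{eq:weight} of $\cW_{\infty}(\cD)$, giving $\sqrt{\Bdd(k)}\, L_{\infty}(w;\cD) \sup_{x} \norm{x-\gamma(x)}_{\infty}$.

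Adding the two contributions yields the bound with $\sup_{x \in D \cup \DD} \norm{x-\gamma(x)}_{\infty}$ on the right-hand side. Since the left-hand side does not depend on $\gamma$, taking the infimum over all multi-bijections turns this supremum into $\dW{\infty}(D,E)$, which is exactly the claim. I expect the only delicate point to be the bookkeeping around the diagonal: because $\gamma$ may send off-diagonal points into $\DD$ and vice versa, one should check that the vanishing of $w$ on $\DD$ routes each such matched pair cleanly into exactly one of the two split terms, and that the infinitely many diagonal-to-diagonal matches contribute nothing, so that all the series above remain finite and absolutely convergent.
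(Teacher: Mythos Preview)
Your argument is correct and is precisely the standard add-and-subtract decomposition one uses to prove this stability bound. Note, however, that the present paper does not supply its own proof of this theorem: it is quoted from \cite{KFH18} without proof, so there is no in-paper argument to compare against. Your proof is essentially the one given in \cite{KFH18}, and the delicate point you flag about diagonal bookkeeping is handled exactly as you describe (the vanishing of $w$ on $\DD$ makes the diagonal padding harmless and the diagonal-to-diagonal matches contribute zero to both split sums).
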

It is also shown in \cite{KFH18} that the Gaussian kernel $k(x,y)=e^{-\frac{\norm{x-y}^{2}}{2\sigma^{2}}}$ is in $\Kc$ and the arctangent type weight function $w_{{\rm arc}}(x)=\arctan(C_{{\rm arc}} \pers(x)^{p_{{\rm arc}}})$ $(C_{{\rm arc}}>0, p_{{\rm arc}} \in \lN)$ is in $\cW_{\infty}(\cD(\lB(M)))$ where $M$ is a triangulable compact subspace in $\lR^{d}$ and $p_{{\rm arc}}>d+1$. 
In other words, for finite point sets $X$ and $Y$ in a triangulable compact subspace $M \subset \lR^{d}$, if $p_{{\rm arc}}>d+1$, then the map $X \mapsto V^{\kG, w_{{\rm arc}}}(D_{q}(\lB(X)))$ is shown to be Lipschitz continuous from Theorem \ref{thm:kernel_stability}.

\section{Limit theorems}
\label{sec:limit_theorems}
In this section, we will regard a persistence diagram as a random value taking values in a metric space $\cD$ and consider the expectation of the PWK vector in the RKHS.
Before discussing probabilistic properties of a persistence diagram, we briefly review the basic of probability theory in a metric space, following \cite{Du10,Vaart98}, and probability theory in a Banach space, following \cite{LT13}.
Throughout this section, we fix $(\Omega, \cA, \prob)$ a probability space.

\subsection{Probability in a metric space}
When $(\cX,\cB_{\cX})$ is a measurable space, a measurable map $X:(\Omega, \cA) \to (\cX, \cB_{\cX})$ is called an $\cX$-valued random element.
If $\cX$ is a topological space and $\cB_{\cX}$ is defined by the Borel $\sigma$-set of $\cX$, an $\cX$-valued random element $X$ is called a Borel random element.
An $\cX$-valued random element $X$ induces a probability measure $P_{X}$ from $\prob$ as $P_{X}(A):=\prob(X^{-1}(A)) ~ (A \in \cB_{\cX})$, and the probability measure $P_{X}$ is called the probability distribution of $X$.
Then, we say that $X$ is drawn from $P_{X}$ and write $X \sim P_{X}$.
When $\cX$-valued random elements $X_{1},\ldots,X_{n}$ are drawn from the same probability distribution $P$ and are independent, we say that $X_{1},\ldots,X_{n}$ are independently identically distributed from $P$ and this is denoted by $X_{1},\ldots,X_{n}, \iid \sim P$.
Let $\varphi:\cX \to \lR$ be a measurable map, then the integral $\int_{\Omega}\varphi(X(\omega))d\prob(\omega)$ is called the expectation of $\varphi(X)$ and denoted by $\lE[\varphi(X)]$.
When we emphasize the probability distribution $P$ of $X$, the expectation $\lE[\varphi(X)]$ is also denoted by $\lE_{X \sim P}[\varphi(X)]$.
An $\lR$-valued random element is simply called a random variable.
We consider an expectation for a map in Section \ref{sec:confidence}, which may not be a random variable, and here define the outer expectation of a map $U:\Omega \to \lR$ by
\[
\lE^{*}[U]:=\inf \{\lE[V] \mid V \mbox{ is a random variable, } V \geq U, ~ \lE[V] \mbox{ exists}\}.
\]

Here, we define stochastic convergences for a sequence of maps in a metric space $(\cX, d)$.
Let $\{X_{n}:\Omega \to \cX\}_{n \in \lN}$ be a sequence of maps and $X$ be an $\cX$-valued random element.
Then, definitions of standard convergences for random variables are generalized to ones for maps taking values in a metric space as follows:
\begin{itemize}
\item If there exists a sequence of random variables $\{\DD_{n}\}_{n \in \lN}$ satisfying $d(X_{n},X)\leq \DD_{n}$ for any $n \in \lN$ and $\prob(\lim_{n \to \infty} \DD_{n} = 0)=1$, then $\{X_{n}\}_{n \in \lN}$ is said to converge to $X$ {\em almost surely} and it is denoted by $X_{n} \convas X$.
\item If $\lim_{n \to \infty}\lE^{*}[\varphi(X_{n})]=\lE[\varphi(X)]$ for all $1$-Lipschitz function $\varphi:\cX \to \lR$, then $\{X_{n}\}_{n \in \lN}$ is said to converge {\em in distribution} to $X$ and it is denoted by $X_{n} \convd X$.
\end{itemize}

\subsection{Probability for a Banach space}
Let $(B, \norm{\cdot}_{B})$ be a Banach space and $V:\Omega \to B$ be a $B$-valued Borel random element.
If there exists an element $W \in B$ satisfying $\int_{\Omega} f(V(\omega))d\prob(\omega) =f(W)$ for any $f \in B^{*}$, where $B^{*}$ be the topological dual space\footnote{$B^{*}$ is the set of all continuous linear real-valued functions $f:B \to \lR$.}  of $B$, the element $W$ is called the Pettis integral of $V$ and denoted by $\lE[V]$.
If $V$ is Radon, the expectation $\lE[V]$ always exists and it satisfies $\norm{\lE[V]}_{B} \leq \lE[\| V \|_{B}]$.\footnote{We call $V:\Omega \to B$ {\em Radon} if, for any $\ee>0$, there exists a compact set $K$ in the Borel $\sigma$-set of $B$ such that $\prob(V \in K) \geq 1-\ee$. For more details, please see Section 2.1 in \cite{LT13}.}
A $B$-valued Radon random element $G$ is called a {\em centered Gaussian random element} if $f(G)$ is a real valued Gaussian random variable with mean zero for any $f \in B^{*}$.
A centered Gaussian random element $G$ is determined by its covariance structure $C_{G}:B^{*} \times B^{*} \to \lR$ which is defined by $C_{G}(f,g):=\lE[f(G)g(G)]$ $(f,g \in B^{*})$. 

For $B$-valued random elements $V_{1},\ldots,V_{n}$, the sum is denoted by $S_{n}:=\sum_{i=1}^{n}V_{i}$.
The strong law of large numbers and the central limit theorem are stated as follows:
\begin{theorem}[Strong law of large numbers, Theorem 10.5 in \cite{LT13}]
\label{thm:SLNN_banach}
Let $B$ be a separable Banach space, $V$ be a $B$-valued Radon random element, and $\{V_{n}\}_{n \in \lN}$ be a sequence of independent $B$-valued random elements distributed as $V$.
Then, $\lE[V] =0$ and $\lE[\| V \|_{B}] < \infty$ if and only if $n^{-1}S_{n} \convas 0$.
\end{theorem}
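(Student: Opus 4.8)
The plan is to reduce the Banach-space statement to the classical real-valued strong law of large numbers, handling the two implications separately. Throughout I write $\lE[V]$ for the Pettis integral, which is well defined whenever $\lE[\norm{V}_{B}]<\infty$ (in a separable $B$ this is the Bochner integral), and I repeatedly scalarize: for any $f \in B^{*}$ the reals $f(V_{n})$ are i.i.d.\ with $\lE[\abs{f(V)}]\leq \norm{f}_{B^{*}}\lE[\norm{V}_{B}]$, so the one-dimensional SLLN applies to them.

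For the necessity (``only if'') direction, suppose $n^{-1}S_{n}\convas 0$. First I would extract integrability: since $n^{-1}S_{n}\to 0$ and $\tfrac{n-1}{n}(n-1)^{-1}S_{n-1}\to 0$ almost surely, subtracting gives $n^{-1}\norm{V_{n}}_{B}\to 0$ a.s., so the event $\{\norm{V_{n}}_{B}\geq n\}$ occurs only finitely often. Because the $V_{n}$ are independent and identically distributed, the converse Borel--Cantelli lemma forces $\sum_{n}\prob(\norm{V}_{B}\geq n)<\infty$, which is equivalent to $\lE[\norm{V}_{B}]<\infty$. Once integrability holds, $\lE[V]$ exists, and to identify it as $0$ I would scalarize: for each $f\in B^{*}$ the real SLLN gives $n^{-1}\sum_{i=1}^{n}f(V_{i})\convas \lE[f(V)]=f(\lE[V])$, while continuity of $f$ together with $n^{-1}S_{n}\convas 0$ gives $f(n^{-1}S_{n})\convas 0$. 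Hence $f(\lE[V])=0$ for every $f\in B^{*}$, so $\lE[V]=0$ by Hahn--Banach.

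For the sufficiency (``if'') direction, assume $\lE[\norm{V}_{B}]<\infty$ and $\lE[V]=0$. The core idea is a finite-range approximation turning the problem into finitely many instances of the scalar SLLN. Using separability of $B$, I would fix $\ee>0$, cover $B$ by countably many disjoint Borel sets of diameter $<\ee$, choose a representative in each, and (truncating the tail via dominated convergence, using $\lE[\norm{V}_{B}]<\infty$) build a Borel map $g_{\ee}:B\to B$ with finite range such that $\lE[\norm{V-g_{\ee}(V)}_{B}]<\ee$. Setting $V_{i}^{\ee}:=g_{\ee}(V_{i})$ keeps the copies i.i.d. The finite-range part is a finite combination of indicator averages, each converging a.s.\ by the scalar SLLN, so $n^{-1}\sum_{i=1}^{n}V_{i}^{\ee}\convas \lE[V^{\ee}]$. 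The remainder is controlled with no geometry of $B$ by the triangle inequality, $\norm{n^{-1}\sum_{i=1}^{n}(V_{i}-V_{i}^{\ee})}_{B}\leq n^{-1}\sum_{i=1}^{n}\norm{V_{i}-V_{i}^{\ee}}_{B}\convas \lE[\norm{V-V^{\ee}}_{B}]<\ee$, again by the scalar SLLN. Since $\lE[V]=0$ yields $\norm{\lE[V^{\ee}]}_{B}=\norm{\lE[V^{\ee}-V]}_{B}\leq\lE[\norm{V-V^{\ee}}_{B}]<\ee$, combining the pieces gives $\limsup_{n}\norm{n^{-1}S_{n}}_{B}\leq 2\ee$ almost surely.

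Finally, to pass from ``for each fixed $\ee$'' to genuine a.s.\ convergence, I would run the bound along $\ee=1/k$ and intersect the countably many full-probability sets, obtaining $\limsup_{n}\norm{n^{-1}S_{n}}_{B}=0$ a.s., i.e.\ $n^{-1}S_{n}\convas 0$. I expect the main obstacle to lie in the sufficiency direction, specifically in the approximation step: one must construct the finite-range $g_{\ee}$ as a genuine \emph{measurable} function of $V$ so that the $V_{i}^{\ee}$ stay i.i.d.\ and the scalar SLLN applies to the indicator events, while keeping the tail error small using only $\lE[\norm{V}_{B}]<\infty$. The payoff of handling this carefully is that the remainder is dominated termwise, so no type/cotype or other geometric hypothesis on $B$ is required — separability alone suffices.
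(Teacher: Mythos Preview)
The paper does not supply its own proof of this statement; it is quoted as Theorem~10.5 from Ledoux--Talagrand and invoked as a black box to derive the strong law for PWK vectors. So there is nothing in the paper to compare your argument against.

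That said, your proposal is correct and is essentially the classical Mourier argument. The necessity direction via $n^{-1}V_{n}\to 0$ almost surely, the second Borel--Cantelli lemma applied to the independent events $\{\norm{V_{n}}_{B}\geq n\}$, and scalarization through $B^{*}$ to identify the mean is the standard route. The sufficiency direction via measurable finite-range approximation in a separable $B$---so that the simple part reduces to finitely many scalar SLLNs on indicators and the remainder is controlled by one more scalar SLLN applied to $\norm{V_{i}-V_{i}^{\ee}}_{B}$---is exactly how the result is proved without any geometric hypothesis on $B$ beyond separability. The measurability concern you flag for $g_{\ee}$ is real but routine: a countable Borel partition into sets of diameter less than $\ee$ together with a fixed choice of representative in each cell gives a Borel map, so the $V_{i}^{\ee}=g_{\ee}(V_{i})$ remain i.i.d.\ and the argument goes through.
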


\begin{theorem}[Central limit theorem, Corollary 10.9 in \cite{LT13}]
\label{thm:CLT_banach}
Let $B$ be a separable Banach space of type 2,\footnote{We do not define the concept of type 2 in this paper because a Hilbert space is of type 2 and a Banach space which will be used in this paper is a reproducing kernel Hilbert space. For more details, please see Section 9.2 in \cite{LT13}.} $V$ be a $B$-valued Radon random element, and $\{V_{n}\}_{n \in \lN}$ be a sequence of independent $B$-valued random elements distributed as $V$.
If $\lE[V] =0$ and $\lE[\| V \|_{B}^{2}] < \infty$, $n^{-1/2}S_{n} \convd G$ where $G$ is a centered Gaussian random variable with the same covariance structure as $V$.
\end{theorem}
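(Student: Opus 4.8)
The plan is to combine the classical finite-dimensional central limit theorem, applied to the one-dimensional projections of $n^{-1/2}S_n$, with a tightness argument that exploits the type 2 geometry of $B$ together with the moment bound $\lE[\norm{V}_{B}^{2}]<\infty$. Once both ingredients are in place, Prokhorov's theorem identifies the weak limit as the centered Gaussian random element $G$ whose covariance structure matches that of $V$, i.e.\ $C_{G}(f,g)=\lE[f(V)g(V)]$.

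First I would handle the finite-dimensional distributions. For any $f_{1},\ldots,f_{k} \in B^{*}$, the hypotheses $\lE[V]=0$ and $\lE[\norm{V}_{B}^{2}]<\infty$ give $\lE[f_{j}(V)]=f_{j}(\lE[V])=0$ and $\lE[f_{i}(V)f_{j}(V)] \leq \norm{f_{i}}\,\norm{f_{j}}\,\lE[\norm{V}_{B}^{2}] < \infty$. Since $f_{j}(n^{-1/2}S_{n})=n^{-1/2}\sum_{i=1}^{n} f_{j}(V_{i})$ is a normalized sum of i.i.d.\ mean-zero square-integrable random vectors in $\lR^{k}$, the classical multivariate CLT yields convergence of $(f_{1}(n^{-1/2}S_{n}),\ldots,f_{k}(n^{-1/2}S_{n}))$ to a centered Gaussian vector with covariance $(\lE[f_{i}(V)f_{j}(V)])_{i,j}$. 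This pins down every finite-dimensional marginal of any weak limit, forcing such a limit to be the centered Gaussian $G$ above.

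Second, and this is the crux, I would establish tightness of $\{n^{-1/2}S_{n}\}_{n}$. Because $B$ is separable, $V$ is strongly measurable, so $V \in L^{2}(\Omega;B)$ is approximable by $B$-valued simple random elements; subtracting their means I obtain, for each $m$, a mean-zero element $\tilde{V}^{(m)}$ valued in a fixed finite-dimensional subspace $F_{m}\subset B$ with $\lE[\norm{V-\tilde{V}^{(m)}}_{B}^{2}]\to 0$. Writing $R^{(m)}:=V-\tilde{V}^{(m)}$ and decomposing each $V_{i}$ accordingly, the type 2 inequality applied to the independent mean-zero summands gives
\[
\lE\bracket{\norm{n^{-1/2}\textstyle\sum_{i=1}^{n} R^{(m)}_{i}}_{B}^{2}} \leq C^{2}\, n^{-1}\sum_{i=1}^{n}\lE[\norm{R^{(m)}_{i}}_{B}^{2}] = C^{2}\,\lE[\norm{R^{(m)}}_{B}^{2}],
\]
uniformly in $n$, where $C$ is the type 2 constant of $B$. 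By Chebyshev's inequality the tail $n^{-1/2}\sum_{i} R^{(m)}_{i}$ is uniformly (in $n$) small in probability once $m$ is large, while the head $n^{-1/2}\sum_{i}\tilde{V}^{(m)}_{i}$ is confined to the finite-dimensional $F_{m}$, where convergence in distribution (from the first step) already forces tightness. A standard argument then shows that the sum of a tight sequence and a sequence uniformly small in probability is tight, yielding tightness of $\{n^{-1/2}S_{n}\}$.

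Finally, Prokhorov's theorem lets me extract from any subsequence a further subsequence converging weakly to some Radon limit; by the first step this limit has the Gaussian marginals determined by $C_{G}(f,g)=\lE[f(V)g(V)]$, hence equals $G$. Since every subsequence has a sub-subsequence converging to the same $G$, the whole sequence satisfies $n^{-1/2}S_{n} \convd G$, matching the convergence-in-distribution notion fixed earlier (the summands are genuine Borel random elements, so the outer expectations collapse to ordinary ones). I expect the tightness step to be the main obstacle: it is precisely where type 2 is indispensable---a general separable Banach space need not support the CLT under only a second-moment hypothesis---and care is needed both to pass from the $L^{2}$ control of the tails to tightness uniformly in $n$ and to guarantee that the resulting Gaussian limit is Radon.
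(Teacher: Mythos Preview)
The paper does not prove this statement at all: Theorem~\ref{thm:CLT_banach} is simply quoted from the literature as Corollary~10.9 of \cite{LT13}, and the paper only \emph{applies} it (together with Proposition~\ref{prop:separable} and the bound~\eqref{eq:norm_pwk}) to obtain Theorem~\ref{thm:CLT_pwk}. So there is no ``paper's own proof'' to compare against.

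That said, your sketch is the standard route to the type~2 CLT and is essentially correct: finite-dimensional marginals via the classical CLT, tightness from the type~2 inequality applied to the remainder after a finite-dimensional approximation in $L^{2}(\Omega;B)$, and identification of the limit via Prokhorov. One small point worth sharpening: the existence of a Radon centered Gaussian $G$ with the prescribed covariance $C_{G}(f,g)=\lE[f(V)g(V)]$ is not automatic in a general separable Banach space and is itself a consequence of the type~2 hypothesis (it is exactly what makes $B$ a so-called Gaussian cotype~2 space); your argument implicitly uses this when you assert that all subsequential limits coincide with a single Radon $G$, so it is worth stating explicitly that type~2 also guarantees the limiting Gaussian lives in $B$.
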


\subsection{Limit theorems for the PWK vectors}
Let $\cD$ be a subspace in $\cD_{\infty}$ and $D$ be a $\cD$-valued Radon random element.
If $V^{k,w}:\cD \to \cH_{k}$ is continuous, the PWK vector $V^{k,w}(D)$ of a $\cD$-valued Radon random element $D$ can be viewed as an $\cH_{k}$-valued Radon random element, and the expectation $\lE[V^{k,w}(D)]$ of the PWK vector is well defined in the RKHS $\cH_{k}$.
From now on, we assume that a positive definite kernel $k$ and a weight function $w$ are selected to make $V^{k,w}:\cD \to \cH_{k}$ continuous.
In fact, Theorem \ref{thm:kernel_stability} ensures that there exist $k$ and $w$ such that $V^{k,w}:\cD \to \cH_{k}$ is continuous.

Let $k \in \Kc$ be a Lipschitz continuous positive definite kernel on $\lR^{2}$.
Then, the RKHS $\cH_{k}$ is separable from Proposition \ref{prop:separable}, which satisfies the assumptions in the strong law of large numbers and the central limit theorem for a Banach space (Theorem \ref{thm:SLNN_banach} and Theorem \ref{thm:CLT_banach}).
For any measurable bounded positive definite kernel $k$ and a weight function $w$ for $\cD$, which are needed to define the PWK vector, we have confirmed in Equation \eqref{eq:norm_pwk} that the norm $\| V^{k,w}(D) \|_{\cH_{k}}$ is always bounded from above independent of $D$.
For this reason, it is always satisfied that $\lE[\| V^{k,w}(D) \|_{\cH_{k}}] < \infty$ and $\lE[\| V^{k,w}(D) \|_{\cH_{k}}^{2}] < \infty$.
For $\cD$-valued random element $D_{1},\ldots,D_{n}$, $S^{k,w}_{n}$ denotes the sum $\sum_{i=1}^{n}V^{k,w}(D_{i})$.
Applying Theorem \ref{thm:SLNN_banach} and Theorem \ref{thm:CLT_banach} to $V^{k,w}(D) - \lE[V^{k,w}(D)]$, we obtain the following:
\begin{theorem}[Strong law of large numbers for the PWK vector]
\label{thm:SLNN_pwk}
Let $k \in \Kc$, $w$ be a weight function for $\cD$, $D$ be a $\cD$-valued Radon random element, and $\{D_{n}\}_{n \in \lN}$ be a sequence of independent $\cD$-valued random elements distributed as $D$.
Then, we have
\[
\frac{1}{n} \sum_{i=1}^{n} V^{k,w}(D_{i})  \convas \lE[V^{k,w}(D)].
\]
\end{theorem}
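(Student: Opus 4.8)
The plan is to reduce the statement to the abstract strong law of large numbers in a separable Banach space (Theorem \ref{thm:SLNN_banach}), applied to the RKHS $\cH_{k}$ after centering the PWK vectors so that the target limit becomes zero. Theorem \ref{thm:SLNN_banach} demands three ingredients: separability of the ambient Banach space, Radon-ness of the random element, and finiteness of its first moment together with vanishing of its mean. Each of these has essentially been prepared in the preceding material, so the proof is largely a matter of assembling them.

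First I would record that $k \in \Kc$ means $k$ is a bounded Lipschitz continuous positive definite kernel, so by Proposition \ref{prop:separable} the RKHS $\cH_{k}$ is separable, in particular a separable Banach space as required by Theorem \ref{thm:SLNN_banach}. Next, under the standing assumption that $V^{k,w}:\cD \to \cH_{k}$ is continuous, the image $V^{k,w}(D)$ of the $\cD$-valued Radon random element $D$ is an $\cH_{k}$-valued Radon random element, so its Pettis integral $\lE[V^{k,w}(D)]$ exists. I would then introduce the centered random element
\[
V := V^{k,w}(D) - \lE[V^{k,w}(D)],
\]
which is Radon because translating a Radon random element by a fixed vector preserves Radon-ness (translation is a homeomorphism carrying compact sets to compact sets), and which satisfies $\lE[V]=0$ by linearity of the Pettis integral. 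Likewise, setting $V_{i} := V^{k,w}(D_{i}) - \lE[V^{k,w}(D)]$, the independence of the $D_{i}$ and the measurability of $V^{k,w}$ make $\{V_{i}\}_{i \in \lN}$ an independent sequence distributed as $V$.

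The moment condition is handled directly by the uniform norm bound \eqref{eq:norm_pwk}: since $\norm{V^{k,w}(D)}_{\cH_{k}} \leq \sqrt{\Bdd(k)}\,\Bdd(w;\cD)$ holds surely, we have $\lE[\norm{V^{k,w}(D)}_{\cH_{k}}] < \infty$, and the triangle inequality together with $\norm{\lE[V^{k,w}(D)]}_{\cH_{k}} \leq \lE[\norm{V^{k,w}(D)}_{\cH_{k}}]$ gives $\lE[\norm{V}_{\cH_{k}}] < \infty$. With separability, Radon-ness, $\lE[V]=0$, and $\lE[\norm{V}_{\cH_{k}}] < \infty$ all verified, Theorem \ref{thm:SLNN_banach} yields $n^{-1}\sum_{i=1}^{n} V_{i} \convas 0$. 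Finally I would undo the centering: since the metric on $\cH_{k}$ is translation invariant, the dominating random sequence $\{\DD_{n}\}_{n \in \lN}$ witnessing $n^{-1}\sum_{i=1}^{n} V_{i} \convas 0$ equally witnesses
\[
\frac{1}{n}\sum_{i=1}^{n} V^{k,w}(D_{i}) - \lE[V^{k,w}(D)] = \frac{1}{n}\sum_{i=1}^{n} V_{i} \convas 0,
\]
which is the claim.

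I do not expect a genuine obstacle here, since the substance lies in the preparatory results rather than in this deduction; the only point requiring care is checking that the metric-space notion of almost sure convergence used in the excerpt (via a dominating random sequence $\{\DD_{n}\}$) transfers correctly under the constant shift, which is immediate from translation invariance of the norm metric. A secondary point worth stating explicitly is that continuity of $V^{k,w}$ is needed not only to define $\lE[V^{k,w}(D)]$ but also to guarantee that each $V^{k,w}(D_{i})$ is a bona fide $\cH_{k}$-valued Radon random element to which the abstract theorem applies.
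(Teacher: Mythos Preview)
Your proposal is correct and follows essentially the same route as the paper: invoke Proposition \ref{prop:separable} for separability of $\cH_{k}$, use the uniform norm bound \eqref{eq:norm_pwk} for the moment condition, and apply Theorem \ref{thm:SLNN_banach} to the centered element $V^{k,w}(D)-\lE[V^{k,w}(D)]$. If anything, you are more explicit than the paper about Radon-ness under translation and the transfer of almost sure convergence under a constant shift, but these are precisely the minor checks the paper leaves implicit.
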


\begin{theorem}[Central limit theorem for the PWK vector]
\label{thm:CLT_pwk}
Let $k \in \Kc$, $w$ be a weight function for $\cD$, $D$ be a $\cD$-valued Radon random element, and $\{D_{n}\}_{n \in \lN}$ be a sequence of independent $\cD$-valued random elements distributed as $D$.
Then, we have 
\[
\frac{1}{\sqrt{n}} \sum_{i=1}^{n} \pare{ V^{k,w}(D_{i}) - \lE[V^{k,w}(D)]} \convd G^{k,w}(D),
\]
where $G^{k,w}(D)$ is the centered Gaussian random variable with the same covariance structure as $V^{k,w}(D)$.
\end{theorem}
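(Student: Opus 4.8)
The plan is to apply the Banach-space central limit theorem (Theorem \ref{thm:CLT_banach}) to the centered random elements $\tilde{V}_i := V^{k,w}(D_i) - \lE[V^{k,w}(D)]$, with ambient space $B = \cH_k$. The entire argument reduces to checking that the hypotheses of Theorem \ref{thm:CLT_banach} hold for $\tilde{V} := V^{k,w}(D) - \lE[V^{k,w}(D)]$, and then reconciling the covariance structure of the limiting Gaussian with that of $V^{k,w}(D)$.

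First I would verify that $\cH_k$ is an admissible ambient space. Since $k \in \Kc$ is bounded and Lipschitz continuous, Proposition \ref{prop:separable} gives that $\cH_k$ is separable; being a Hilbert space, it is automatically of type 2 (as recorded in the footnote to Theorem \ref{thm:CLT_banach}). Thus $\cH_k$ is a separable Banach space of type 2, as required.

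Next I would check the three conditions on $\tilde{V}$. \textbf{(i) Radon.} Because $V^{k,w}:\cD \to \cH_k$ is continuous and $D$ is a $\cD$-valued Radon random element, the image $V^{k,w}(D)$ is an $\cH_k$-valued Radon random element; hence its Pettis integral $\lE[V^{k,w}(D)]$ is defined, and translating by this fixed vector (a homeomorphism of $\cH_k$) leaves $\tilde{V}$ Radon. \textbf{(ii) Zero mean.} By linearity of the Pettis integral, $\lE[\tilde{V}] = \lE[V^{k,w}(D)] - \lE[V^{k,w}(D)] = 0$. \textbf{(iii) Finite second moment.} Equation \eqref{eq:norm_pwk} bounds $\norm{V^{k,w}(D)}_{\cH_k}^2 \le \Bdd(k)\Bdd(w;\cD)^2$ uniformly in $D$, and $\norm{\lE[V^{k,w}(D)]}_{\cH_k} \le \lE[\norm{V^{k,w}(D)}_{\cH_k}]$ is finite, so the triangle inequality shows $\norm{\tilde{V}}_{\cH_k}$ is uniformly bounded and therefore $\lE[\norm{\tilde{V}}_{\cH_k}^2] < \infty$.

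With the hypotheses in place, Theorem \ref{thm:CLT_banach} applied to the independent copies $\tilde{V}_1, \tilde{V}_2, \dots$ of $\tilde{V}$ yields $n^{-1/2}\sum_{i=1}^{n} \tilde{V}_i \convd G$, where $G$ is the centered Gaussian random element with covariance structure $C_G(f,g) = \lE[f(\tilde{V})g(\tilde{V})]$ for $f,g \in \cH_k^{*}$. The only point requiring care, and the main (if modest) obstacle, is that the statement asks for the covariance structure of $V^{k,w}(D)$ rather than of its centered version. Since $\tilde{V}$ has zero mean, $\lE[f(\tilde{V})] = f(\lE[\tilde{V}]) = 0$, so $C_G(f,g) = \Cov(f(\tilde{V}), g(\tilde{V}))$; and as covariance is invariant under translation by the constant $\lE[V^{k,w}(D)]$, this equals $\Cov(f(V^{k,w}(D)), g(V^{k,w}(D)))$. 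Hence $G = G^{k,w}(D)$ has exactly the covariance structure of $V^{k,w}(D)$, which completes the argument.
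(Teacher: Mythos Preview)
Your proposal is correct and follows essentially the same approach as the paper: both verify the hypotheses of Theorem \ref{thm:CLT_banach} for the centered $V^{k,w}(D)-\lE[V^{k,w}(D)]$ by invoking Proposition \ref{prop:separable} for separability, the type~2 property of Hilbert spaces, and the uniform norm bound \eqref{eq:norm_pwk} for the second moment. Your treatment is in fact slightly more careful than the paper's, which does not explicitly address why the covariance structure of the limiting Gaussian, a priori that of the \emph{centered} variable, agrees with that of $V^{k,w}(D)$ itself; your translation-invariance remark fills that small gap.
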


\section{Confidence interval}
\label{sec:confidence}
Since the RKHS $\cH_{k}$ is a subspace of the real-valued functional space on $\lR^{2}$, the PWK vector $V^{k,w}(D) \in \cH_{k}$ is a real-valued function on $\lR^{2}$ by
\[
z \in \lR^{2} \mapsto V^{k,w}(D)(z):=\sum_{x \in D}w(x)k(z,x) \in \lR.
\]
Let $P$ be a Radon probability measure on $\cD$ and $D \sim P$, then the expectation $\lE[V^{k,w}(D)]$ is a real-valued function on $\lR^{2}$, and its value at $z \in \lR^{2}$ is denoted by $\lE[V^{k,w}(D)](z) \in \lR$.
From the strong law of large numbers for the PWK vector (Thereom \ref{thm:SLNN_pwk}), we have confirmed that 
\[
\norm{ \frac{1}{n} \sum_{i=1}^{n}  V^{k,w}(D_{i}) -  \lE[V^{k,w}(D)]}_{\cH_{k}} \convas 0
\]
where $D_{1},\ldots, D_{n}, \iid \sim P$.
Furthermore, the uniform convergence
\[
\sup_{z \in \lR^{2}} \abs{ \frac{1}{n} \sum_{i=1}^{n} V^{k,w}(D_{i})(z) -  \lE[V^{k,w}(D)](z)} \convas 0 
\]
follows from Equation \eqref{eq:kernel_ineq}.
Since the i.i.d. number $n$ is finite in practice, to measure how the sample mean $n^{-1}\sum_{i=1}^{n} V^{k,w}(D_{i})(z)$ is close to the expectation $\lE[V^{k,w}(D)](z)$, we will estimate a number $\xi_{n,\alpha} \in \lR$ satisfying the following inequality:
\begin{align}
\prob\pare{ \abs{ \frac{1}{n} \sum_{i=1}^{n} V^{k,w}(D_{i})(z) -  \lE[V^{k,w}(D)](z) } \leq \xi_{n,\alpha} \mbox{ for any } z \in \lR^{2}} \geq 1-\alpha. \label{eq:confi_xi}
\end{align}
This estimation is based on the concept of a uniform confidence band for a stochastic process when we see $\{\lE[V^{k,w}(\cdot)](z):\cD \to \lR\}_{z \in \lR^{2}}$ as a stochastic process.
We first review a uniform confidence band for a stochastic process, following \cite{Ko08,Vaart98}, and then construct the uniform confidence band for the expectation of a PWK vector.

\subsection{Review of a uniform confidence band}
\label{subsec:confi_basic}
Let $(\cX, \cB_{\cX})$ be a measurable space, $X:\Omega \to \cX$ be an $\cX$-valued random element, $P$ be the distribution of $X$, $\theta$ be a real valued parameter attached to $P$, and $X_{1},\ldots,X_{n}, \iid \sim P$.
For simplicity, the collection of the random elements $X_{1},\ldots,X_{n}$ is denoted by $\bX_{n}:=(X_{1},\ldots,X_{n})$.
In the concept of interval estimation, we construct an interval made from $\bX_{n}$ which contains $\theta$ with high probability.
To be precise, for $\alpha \in (0, 1)$ and two statistics $L(\bX_{n})$ and $U(\bX_{n})$ satisfying $L(\bX_{n}) \leq U(\bX_{n})$, an interval $[L(\bX_{n}), U(\bX_{n})]$ is called a {\em confidence interval} for $\theta$ at level $1-\alpha$ if it satisfies $\prob(\theta \in [L(\bX_{n}), U(\bX_{n})]) \geq 1-\alpha.$

\begin{example}
\label{ex:confidence_normal}
Let $X_{1},\ldots,X_{n}, \iid \sim \cN(\mu,\sigma^{2}_{0})$,\footnote{$\cN(\mu,\sigma^{2})$ denotes the normal distribution with mean $\mu \in \lR$ and variance $\sigma^{2}>0$.} where $\mu$ is unknown and $\sigma^{2}_{0}$ is known, and $\hat{\eta}(\bX_{n}):=(\sum_{i=1}^{n}X_{i} -\mu)/ \sqrt{n}\sigma_{0}$ be a statistics.
Since $\hat{\eta}(\bX_{n}) \sim \cN(0, 1)$, we have $\prob\pare{\hat{\eta}(\bX_{n}) \leq c}=(2\pi)^{-1/2}\int_{-\infty}^{c} \exp(-z^{2}/2)dz$.
Then, for the upper $\alpha$-quantile $c_{\alpha}$ satisfying $\Phi(c_{\alpha})=1-\alpha$, $[\overline{X} - \pare{\sigma_{0}/\sqrt{n}}c_{\alpha/2}, \overline{X} + \pare{\sigma_{0}/\sqrt{n}}c_{\alpha/2}]$ is the confidence interval for $\mu$ at $1-\alpha$ because $\prob\pare{ \abs{\hat{\eta}(\bX_{n})} \leq c_{\alpha/2}}=1-\alpha$.
\end{example}

Fortunately, the quantile $c_{\alpha}$ of the distribution $\prob\pare{\hat{\eta}(\bX_{n}) \leq c_{\alpha}}=1-\alpha$ in Example \ref{ex:confidence_normal} is numerically computable from the property of the standard normal distribution.
In the case of a general statistic $\hat{\eta}(\bX_{n})$, its distribution of $\hat{\eta}(\bX_{n})$ may be unknown or the quantile may be hard to compute.
For such a case, the {\em bootstrap method} is a powerful tool to estimate the quantile.

Let $\hat{\eta}(\bm{X}_{n})$ be a statistic of $X_{1},\ldots,X_{n}, \iid \sim P$.
For any $\omega_{0} \in \Omega$, we define a probability distribution $\lP_{n}(\omega_{0})$ on $\cX$ by $\lP_{n}(\omega_{0}):=n^{-1}\sum_{i=1}^{n}\dd_{X_{i}(\omega_{0})}$ where $\dd_{x}$ is the Dirac delta measure at $x \in \cX$.
The random probability measure $\lP_{n}=n^{-1}\sum_{i=1}^{n}\dd_{X_{i}}$ is called the {\em empirical distribution} of $X_{1},\ldots,X_{n}$.
Let $X^{*}$ be a map which transforms $\omega \in \Omega$ to an $\cX$-valued random element.
If $X^{*}(\omega) \sim \lP_{n}(\omega)$ for any $\omega \in \Omega$, $X^{*}$ is called a {\em bootstrap sample} from $\lP_{n}$ and denoted by $X^{*} \sim \lP_{n}$.
Even after we fix $\omega_{0} \in \Omega$, $X^{*}(\omega_{0})$ is still an $\cX$-valued random element, that is, $X^{*}(\omega_{0})(\zeta) \in \lR$ for $\zeta \in \Omega$.
Here, we define a function $\hat{\eta}(\bX^{*}_{n})(\omega_{0}):\Omega \to \lR$ by $(\hat{\eta}(\bX^{*}_{n})(\omega_{0}))(\zeta):=\hat{\eta}(X^{*}_{1}(\omega_{0})(\zeta),\ldots,X^{*}_{n}(\omega_{0})(\zeta))$ $(\zeta \in \Omega)$.

In order to define the convergence in distribution for $\hat{\eta}(\bX^{*}_{n})$, we define the {\em conditional convergence} in distribution.
Let $\{Y_{n} \sim \lP_{n}\}_{n \in \lN}$ be a sequence of bootstrap samples and $T$ be an $\cX$-valued random element.
Recall that the convergence in distribution $Y_{n}(\omega_{0}) \convd T$ for some $\omega_{0} \in \Omega$ is defined by $\lim_{n \to \infty}\lE^{*}[\varphi(Y_{n}(\omega_{0}))]=\lE[\varphi(T)]$ for all $1$-Lipschitz function $\varphi:\cX \to \lR$.
Thus, in a similar way, we define that a sequence $\{Y_{n} \sim \lP_{n}\}_{n \in \lN}$ converges in distribution to $T$ {\em conditionally on} $X_{1},\ldots,X_{n}$ if a function $H_{n}:\Omega \to \lR$ which is defined by
\[
H_{n}(\omega):=\sup_{\varphi \in {\rm BL}_{1}(\cX)} \abs{\lE^{*}[\varphi(Y_{n}(\omega))]-\lE[\varphi(T)]},
\]
where ${\rm BL}_{1}(\cX)$ is the set of all $1$-Lipschitz functions $\varphi:\cX \to \lR$, satisfies $H_{n} \convas 0$.
Then, the conditional convergence is denoted by $Y_{n} \mid \bX_{n} \convd T$.

For $\omega \in \Omega$, $\hat{\xi}^{*}_{n, \alpha}(\omega)$ denotes the upper $\alpha$-quantile of $\hat{\eta}(\bX^{*}_{n})(\omega)$, and $\hat{\xi}^{*}_{n, \alpha}$ is treated as a real-valued function on $\Omega$.
The following theorem is obtained by a slight modification of the proof of Lemma 23.3 in \cite{Vaart98}:
\begin{theorem}[\cite{Vaart98}]
\label{thm:asymptotic_efron}
Let $X_{1},\ldots, X_{n}, \iid \sim P$ and $T$ be a random value with continuous distribution function $F$.
If $\hat{\eta}(\bX_{n}) \convd T$ and $\hat{\eta}(\bX^{*}_{n}) \mid \bX_{n} \convd T$, then for almost all $\alpha \in (0,1)$, we have
\begin{align}
\label{eq:asymp_efron}
\lim_{n \to \infty} \prob\pare{\braces{\omega \in \Omega \remiddle \hat{\eta}(\bX_{n})(\omega) \leq \hat{\xi}^{*}_{n, \alpha}(\omega) }}= 1-\alpha,
\end{align}
where $\hat{\xi}^{*}_{n, \alpha}(\omega)$ is the upper $\alpha$-quantile of $\hat{\eta}(\bX^{*}_{n})(\omega)$ for $\omega \in \Omega$.
\end{theorem}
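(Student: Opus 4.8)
The plan is to reduce \eqref{eq:asymp_efron} to the convergence of the bootstrap quantiles $\hat{\xi}^{*}_{n,\alpha}$ to the population quantile of $F$, and then to close with a Slutsky-type squeeze. Throughout I keep the two sources of randomness separate: the data $\bX_{n}$, governed by $\prob$ through $\omega$, and the bootstrap resampling, governed by the inner variable $\zeta$. Write
\[
G^{*}_{n}(t, \omega) := \prob\pare{\braces{\zeta \in \Omega \remiddle \hat{\eta}(\bX^{*}_{n})(\omega)(\zeta) \leq t}}
\]
for the bootstrap distribution function, and let $\xi_{\alpha}$ denote the upper $\alpha$-quantile of $F$, so that $F(\xi_{\alpha}) = 1-\alpha$.

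First I would convert the conditional convergence $\hat{\eta}(\bX^{*}_{n}) \mid \bX_{n} \convd T$ into uniform convergence of these distribution functions. By definition this says $H_{n} \convas 0$ in the bounded-Lipschitz metric, so on a single $\prob$-almost-sure event the bounded-Lipschitz distance tends to $0$, whence $G^{*}_{n}(t,\omega) \to F(t)$ for every $t \in \lR$ (continuity of $F$ makes every point a continuity point). Since $F$ is continuous, P\'olya's theorem upgrades this to $\sup_{t \in \lR} \abs{G^{*}_{n}(t,\omega) - F(t)} \to 0$ on that same event. Uniform convergence of these monotone functions then transfers to convergence of their quantiles: at every $\alpha$ for which $F$ is strictly increasing through $\xi_{\alpha}$ (equivalently, at every continuity point $1-\alpha$ of the generalized inverse $F^{-1}$), I would conclude $\hat{\xi}^{*}_{n,\alpha}(\omega) \to \xi_{\alpha}$ almost surely, and hence $\hat{\xi}^{*}_{n,\alpha} \convp \xi_{\alpha}$. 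Because $F^{-1}$ is monotone it has at most countably many discontinuities, and this is precisely the origin of the qualifier ``for almost all $\alpha$''.

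Then I would run the squeeze. Fix such an $\alpha$ and let $\ee > 0$. The inclusion
\[
\braces{\hat{\eta}(\bX_{n}) \leq \hat{\xi}^{*}_{n,\alpha}} \subseteq \braces{\hat{\eta}(\bX_{n}) \leq \xi_{\alpha} + \ee} \cup \braces{\hat{\xi}^{*}_{n,\alpha} > \xi_{\alpha} + \ee}
\]
yields $\prob(\hat{\eta}(\bX_{n}) \leq \hat{\xi}^{*}_{n,\alpha}) \leq \prob(\hat{\eta}(\bX_{n}) \leq \xi_{\alpha}+\ee) + \prob(\hat{\xi}^{*}_{n,\alpha} > \xi_{\alpha}+\ee)$. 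The last term vanishes by $\hat{\xi}^{*}_{n,\alpha} \convp \xi_{\alpha}$, while the first converges to $F(\xi_{\alpha}+\ee)$ since $\hat{\eta}(\bX_{n}) \convd T$ and $F$ is continuous at $\xi_{\alpha}+\ee$. Letting $\ee \downarrow 0$ gives $\limsup_{n} \prob(\hat{\eta}(\bX_{n}) \leq \hat{\xi}^{*}_{n,\alpha}) \leq F(\xi_{\alpha}) = 1-\alpha$. The mirror inclusion based on $\xi_{\alpha}-\ee$ gives the matching bound for the $\liminf$, and together they establish \eqref{eq:asymp_efron}.

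The main obstacle I anticipate is the middle step, not the final squeeze. Translating the conditional weak convergence---an almost-sure statement in $\omega$ phrased through the bounded-Lipschitz metric and the outer expectation $\lE^{*}$---into genuine pointwise and then uniform convergence of the random distribution functions $G^{*}_{n}(\cdot,\omega)$ requires care with the interplay of the two randomness layers and with the measurability issues that the outer expectation is designed to absorb. Once the almost-sure uniform convergence of the bootstrap distribution functions is in hand, deducing quantile convergence and closing with the Slutsky argument is routine; this is exactly the ``slight modification'' of Lemma 23.3 in \cite{Vaart98} to which the statement refers.
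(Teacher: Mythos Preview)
Your argument is correct and is precisely the standard proof of Lemma 23.3 in \cite{Vaart98}: first use the conditional convergence and continuity of $F$ (via P\'olya) to get $\hat{\xi}^{*}_{n,\alpha}\convp \xi_{\alpha}$ for almost all $\alpha$, then combine this with $\hat{\eta}(\bX_{n})\convd T$ through the two-sided $\ee$-squeeze. Note that the paper does not actually supply its own proof of this theorem; it only states the result and attributes it to a ``slight modification'' of that lemma, so there is nothing further to compare against.
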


Since the upper $\alpha$-quantile $\hat{\xi}_{n, \alpha}$ of $\hat{\eta}(\bX_{n})$ is defined by $\prob(\hat{\eta}(\bX_{n}) \leq \hat{\xi}_{n, \alpha}) = 1-\alpha$, Theorem \ref{thm:asymptotic_efron} ensures that $\hat{\xi}^{*}_{n, \alpha}$ in Equation \eqref{eq:asymp_efron} approximates $\hat{\xi}_{n, \alpha}$ in an asymptotic sense.

As mentioned above, we will view a family $\{\lE[V^{k,w}(\cdot)](z):\cD \to \lR\}_{z \in \lR^{2}}$ of values of the expectation of a PWK vector as a stochastic process.
Here, we explain the bootstrap method for a general stochastic process.

Let $T$ be a set. 
A family $\lX=\{\lX(t) :\Omega \to \lR\}_{ t \in T}$ of random variables is called a {\em stochastic process} indexed by $T$.
A stochastic process $\lG=\{\lG(t)\}_{t \in T}$ indexed by $T$ is called the {\em Gaussian process} if, for any finite subset $T_{d}:=\{t_{1},\ldots,t_{d}\} \subset T$, the distribution of the multivariate vector $(\lG(t_{1}), \ldots, \lG(t_{d}))^{T}$ is the $d$-dimensional normal distribution $\cN_{d}(0,\bm{\Sigma}_{T_{d}})$ where the covariance matrix is given by $(\bm{\Sigma}_{T_{d}})_{i,j}=\lE[\lG(t_{i})\lG(t_{j})]$ for $i,j =1,\ldots,d$.
Let $\cF:=\{f:\cX \to \lR\}$ be a family of real-valued measurable functions on $\cX$ and $Pf:=\int_{\cX} f(x)dP(x)$ denote the expectation of $f \in \cF$.
Then, the family $\{Pf\}_{f \in \cF}$ of the expectations is a stochastic process over $\cF$.
Let $X_{1},\ldots, X_{n}, \iid \sim P$, $\lP_{n}$ be the empirical distribution of $X_{1},\ldots,X_{n}$, and $\lG_{n}f$ denote $\sqrt{n}(\lP_{n}f-Pf)$.
Then, the stochastic process $\lG_{n}=\{\lG_{n}f \}_{f \in \cF}$ indexed by $\cF$ is called the {\em empirical process} of $X_{1},\ldots,X_{n}$.
In order to define a convergence of $\lG_{n}$, we define a space $\ell^{\infty}(\cF)$ of bounded\footnote{We call $\lX$ bounded if $\prob(\lX f<\infty)=1$ for any $f \in \cF$.} stochastic processes over $\cF$ and use a metric derived from the uniform norm $\norm{\lX}_{\infty}:=\sup_{f \in \cF} \abs{\lX f}$ as a metric on $\ell^{\infty}(\cF)$.
If $\lG_{n} \convd \lG$ in $\ell^{\infty}(\cF)$ where the covariance structure of $\lG$ is given by $\lE[\lG f \lG g]=Pfg - Pf Pg$ $(f,g \in \cF)$, $\cF$ is said to be {\em $P$-Donsker}.

For bootstrap samples $X^{*}_{1},\ldots,X^{*}_{n}, \iid \sim \lP_{n}$, we define the {\em bootstrap empirical distribution} by $\lP^{*}_{n}=n^{-1}\sum_{i=1}^{n}\dd_{X^{*}_{i}}$.
Recall again that, after we fix $\omega_{0} \in \Omega$, $X^{*}_{i}(\omega_{0}) \sim \lP_{n}(\omega_{0})$ is an $\cX$-valued random element and the bootstrap empirical distribution $\lP^{*}_{n}(\omega_{0})=n^{-1}\sum_{i=1}^{n}\dd_{X^{*}_{i}(\omega_{0})}$ is still a random probability distribution.
If $\lG^{*}_{n}(\omega):=\{ \sqrt{n}(\lP^{*}_{n}(\omega)f-\lP_{n}(\omega)f) \}_{f \in \cF}$ is a stochastic process for any $\omega \in \Omega$, we call $\lG^{*}_{n}$ the {\em bootstrap empirical process} of $X^{*}_{1},\ldots,X^{*}_{n}$.

From now on, we will characterize a class of $\cF$ which satisfies $\lG^{*}_{n} \mid \bX_{n} \convd \lG$ in $\ell^{\infty}(\cF)$.
A function $E:\cX \to \lR$ is called the {\em envelope function} of $\cF$ if $\abs{f(x)} \leq E(x) < \infty$ for any $f \in \cF$ and $x \in \cX$.
The envelope function $E$ of $\cF$ is a real-valued function on the probability space $(\cX,\cB_{\cX},P)$ and we define the expectation of $E$ by the outer expectation $\lE^{*}_{X \sim P}[E(X)]$.
Then, the conditional convergence $\lG^{*}_{n} \mid \bX_{n} \convd \lG$ is characterized by the $P$-Donsker condition and the boundedness of the outer expectation of a squared envelope function. 
\begin{proposition}[Theorem 23.7 in \cite{Vaart98}, Theorem 2.7 in \cite{Ko08}]
\label{prop:donsker}
Let $\cF$ be a family of measurable functions on $\cX$ with envelope function $E$.
If $\cF$ is $P$-Donsker and $\lE^{*}_{X \sim P}[E(X)^{2}] < \infty$, then $\lG^{*}_{n} \mid \bX_{n} \convd \lG$ in $\ell^{\infty}(\cF)$.
\end{proposition}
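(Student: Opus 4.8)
This is the functional (bootstrap) central limit theorem for empirical processes, so the plan is to realize the conditional bootstrap process as a multiplier empirical process and then invoke a conditional multiplier central limit theorem. Representing the Efron bootstrap weights as $(M_{n1},\ldots,M_{nn}) \sim \mathrm{Multinomial}(n;1/n,\ldots,1/n)$, drawn independently of $X_{1},\ldots,X_{n}$, I would first write
$$
\lG^{*}_{n}f = \sqrt{n}\,(\lP^{*}_{n}f - \lP_{n}f) = \frac{1}{\sqrt{n}} \sum_{i=1}^{n} (M_{ni}-1)\,f(X_{i}).
$$
Because $\sum_{i=1}^{n}(M_{ni}-1)=0$, the summand may be recentered to $(M_{ni}-1)(f(X_{i})-\lP_{n}f)$ without changing the process, which exhibits $\lG^{*}_{n}$ as a weighted empirical process with mean-zero exchangeable weights. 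The entire problem thus reduces to showing that this weighted process converges, conditionally on $\bX_{n}$, to the Gaussian process $\lG$ with covariance $\lE[\lG f\,\lG g]=Pfg-Pf\,Pg$.

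Following the standard two-part scheme for convergence in $\ell^{\infty}(\cF)$, I would prove (i) convergence of the finite-dimensional marginals and (ii) conditional asymptotic equicontinuity. For (i), fix $f_{1},\ldots,f_{d}\in\cF$; conditionally on the data the vector $(\lG^{*}_{n}f_{1},\ldots,\lG^{*}_{n}f_{d})$ is a normalized sum of the independent contributions of the weights, and the conditional Lindeberg central limit theorem in $\lR^{d}$ yields, almost surely, a limit $\cN_{d}(0,\bm{\Sigma})$ whose conditional covariances $\lP_{n}f_{i}f_{j}-\lP_{n}f_{i}\,\lP_{n}f_{j}$ converge to $Pf_{i}f_{j}-Pf_{i}\,Pf_{j}$ by the strong law of large numbers, with $\lE^{*}[E^{2}]<\infty$ guaranteeing the requisite integrability. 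The heart of the argument is (ii): I must show that the conditional modulus of continuity $\sup_{\rho_{P}(f,g)<\dd}\abs{\lG^{*}_{n}f-\lG^{*}_{n}g}$, where $\rho_{P}$ is the $L^{2}(P)$ seminorm, can be made uniformly small in conditional probability (for almost every data sequence) by taking $\dd$ small, uniformly in large $n$. This is where the $P$-Donsker hypothesis is used.

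To carry out (ii) I would remove the dependence among the multinomial weights by Poissonization, replacing $(M_{ni})$ by independent $N_{i}\sim\mathrm{Poisson}(1)$ and controlling the discrepancy through the conditioning $\sum_{i}N_{i}=n$; equivalently one verifies the Lindeberg-type conditions of the exchangeable-weights bootstrap. This reduces matters to genuinely independent multipliers $\xi_{i}=N_{i}-1$ with $\lE\xi_{i}=0$ and $\lE\xi_{i}^{2}=1$, whose light tails satisfy the $\norm{\xi}_{2,1}<\infty$ condition needed for the almost-sure conditional multiplier central limit theorem. For such multipliers, symmetrization together with the multiplier inequalities bounds the conditional modulus of continuity of $n^{-1/2}\sum_{i}\xi_{i}(f(X_{i})-Pf)$ by that of the Rademacher-symmetrized process $n^{-1/2}\sum_{i}\varepsilon_{i}(f(X_{i})-Pf)$, which is asymptotically negligible precisely because $\cF$ is $P$-Donsker; the square-integrable envelope is exactly what makes the truncation and multiplier-inequality steps valid. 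Combining (i) and (ii) gives $\lG^{*}_{n}\mid\bX_{n}\convd\lG$ in $\ell^{\infty}(\cF)$.

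The main obstacle is step (ii), the conditional asymptotic equicontinuity: the finite-dimensional convergence is routine, but transferring the tightness of the ordinary empirical process to its bootstrap version requires the multiplier inequalities preceded by a symmetrization that decouples the random weights from the data, and it is here that both the Donsker property and the moment bound $\lE^{*}[E^{2}]<\infty$ are indispensable. A secondary, purely technical nuisance is the passage from the dependent multinomial weights to the independent Poisson multipliers, which must be handled either by Poissonization or by checking the exchangeable-bootstrap conditions directly.
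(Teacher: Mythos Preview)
Your sketch is essentially the standard argument from the cited references: represent the Efron bootstrap through multinomial weights, pass to independent Poisson multipliers, and then use the (conditional) multiplier central limit theorem together with the $P$-Donsker asymptotic equicontinuity and the envelope moment $\lE^{*}[E^{2}]<\infty$. That is exactly the route taken in Theorem~23.7 of \cite{Vaart98} (and its companion Theorem~2.7 in \cite{Ko08}).

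Note, however, that the paper does not supply its own proof of this proposition: it is stated as a quotation from the literature and used as a black box to derive Theorem~\ref{thm:confi_interval} and Corollary~\ref{cor:confi_interval_pwk_cor}. So there is nothing in the paper to compare your argument against beyond the citation itself; your proposal simply reconstructs the proof that those references already contain.
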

 
Let $\varphi:\ell^{\infty}(\cF) \to \lR$ be a continuous function and $\cF$ satisfy the assumption in Proposition \ref{prop:donsker}, then we have $\varphi(\lG_{n}) \convd \varphi(\lG)$ and $\varphi(\lG_{n}^{*}) \mid \bX_{n} \convd \varphi(\lG)$ from the continuous mapping theorem (Theorem 18.11 in \cite{Vaart98}).
As a continuous function on $\ell^{\infty}(\cF)$, we will use $\psi:\ell^{\infty}(\cF) \to \lR$ which is defined by $\lH \mapsto \sup_{f \in \cF}\abs{\lH f}$.
From Theorem \ref{thm:asymptotic_efron}, we have $\lim_{n \to \infty}\prob( \psi(\lG_{n}) \leq \hat{\xi}^{*}_{n, \alpha})=1-\alpha$ where $\hat{\xi}^{*}_{n, \alpha}(\omega)$ is the upper $\alpha$-quantile of $\psi(\lG_{n}^{*})(\omega)=\sup_{f \in \cF} \abs{ \lG_{n}^{*}(\omega)f}$.
Then, the probability $\prob(\psi(\lG_{n}) \leq \hat{\xi}^{*}_{n, \alpha})$ is given by
\begin{align*}
\prob\pare{ \psi(\lG_{n}) \leq \hat{\xi}^{*}_{n, \alpha}}
&=\prob\pare{ -\hat{\xi}^{*}_{n, \alpha} \leq \sqrt{n}\pare{\lP_{n}f - Pf} \leq \hat{\xi}^{*}_{n, \alpha} \mbox{  for any } f \in \cF} \\
&=\prob\pare{ Pf \in \bracket{\lP_{n}f-\frac{\hat{\xi}^{*}_{n, \alpha}}{\sqrt{n}}, \lP_{n}f+\frac{\hat{\xi}^{*}_{n, \alpha}}{\sqrt{n}}} \mbox{  for any } f \in \cF}.
\end{align*}
Since a realization of $\hat{\xi}^{*}_{n, \alpha} / \sqrt{n}$ is independent of any $f \in \cF$, we call that $\lP_{n}f \pm \hat{\xi}^{*}_{n, \alpha} / \sqrt{n}$ is a {\em uniform confidence band} for $\{ Pf\}_{f \in \cF}$ at level $1-\alpha$.
To sum up, the uniform confidence band for $\{Pf\}_{f \in \cF}$ is obtained as follows:
\begin{theorem}
\label{thm:confi_interval}
Let $X_{1},\ldots, X_{n}, \iid \sim P$ and $\cF$ be a family of measurable functions on $\cX$.
If $\cF$ is $P$-Donsker and the envelope function $E$ of $\cF$ satisfies $\lE^{*}_{X \sim P}[E(X)^{2}] < \infty$, then for almost all $\alpha \in (0,1)$, we have
\[
\lim_{n\to\infty}\prob\pare{ Pf \in \bracket{\lP_{n}f-\frac{\hat{\xi}^{*}_{n, \alpha}}{\sqrt{n}}, \lP_{n}f+\frac{\hat{\xi}^{*}_{n, \alpha}}{\sqrt{n}}} \mbox{ {\rm  for any} } f \in \cF} = 1-\alpha,
\]
where $\hat{\xi}^{*}_{n, \alpha}(\omega)$ is the upper $\alpha$-quantile of $\sup_{f \in \cF} \abs{ \lG_{n}^{*}(\omega)f}$ for $\omega \in \Omega$.
\end{theorem}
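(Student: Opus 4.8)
The plan is to collapse the process-level limit theorems onto a single scalar statement via the sup-functional, and then invoke the quantile-matching result of Theorem~\ref{thm:asymptotic_efron}. First I would fix the map $\psi:\ell^{\infty}(\cF)\to\lR$, $\psi(\lH):=\sup_{f\in\cF}\abs{\lH f}$, and record that it is continuous for the uniform norm on $\ell^{\infty}(\cF)$; indeed it is $1$-Lipschitz, since $\abs{\psi(\lH_{1})-\psi(\lH_{2})}\leq\norm{\lH_{1}-\lH_{2}}_{\infty}$. This is the one continuous functional that converts the functional convergences into the scalar convergence the quantile argument requires.

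Next I would assemble the two distributional convergences. The $P$-Donsker hypothesis is precisely $\lG_{n}\convd\lG$ in $\ell^{\infty}(\cF)$, and Proposition~\ref{prop:donsker}, which applies because $\cF$ is $P$-Donsker and $\lE^{*}_{X\sim P}[E(X)^{2}]<\infty$, supplies the conditional convergence $\lG^{*}_{n}\mid\bX_{n}\convd\lG$. Applying the continuous mapping theorem to $\psi$ on both sides yields $\psi(\lG_{n})\convd\psi(\lG)$ and $\psi(\lG^{*}_{n})\mid\bX_{n}\convd\psi(\lG)$. Setting $\hat{\eta}(\bX_{n}):=\psi(\lG_{n})=\sup_{f\in\cF}\abs{\lG_{n}f}$ and $T:=\psi(\lG)=\sup_{f\in\cF}\abs{\lG f}$, I would then invoke Theorem~\ref{thm:asymptotic_efron} to obtain $\lim_{n\to\infty}\prob(\psi(\lG_{n})\leq\hat{\xi}^{*}_{n,\alpha})=1-\alpha$ for almost all $\alpha$, where $\hat{\xi}^{*}_{n,\alpha}$ is exactly the upper $\alpha$-quantile of $\psi(\lG^{*}_{n})=\sup_{f\in\cF}\abs{\lG^{*}_{n}f}$ named in the statement.

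Finally I would translate the scalar event back into the claimed band. Since $\lG_{n}f=\sqrt{n}(\lP_{n}f-Pf)$, the event that $\psi(\lG_{n})\leq\hat{\xi}^{*}_{n,\alpha}$ coincides with the event that $-\hat{\xi}^{*}_{n,\alpha}\leq\sqrt{n}(\lP_{n}f-Pf)\leq\hat{\xi}^{*}_{n,\alpha}$ for every $f\in\cF$, which rearranges into the statement that $Pf\in\bracket{\lP_{n}f-\hat{\xi}^{*}_{n,\alpha}/\sqrt{n},\,\lP_{n}f+\hat{\xi}^{*}_{n,\alpha}/\sqrt{n}}$ for every $f\in\cF$. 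Substituting this equality of events into the limit from the previous step gives the theorem.

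The step I expect to be the genuine obstacle is the admissibility of applying Theorem~\ref{thm:asymptotic_efron}, which demands that the limit $T=\sup_{f\in\cF}\abs{\lG f}$ have a continuous distribution function $F$. This does not follow from the functional CLT alone and must be argued separately. For a non-degenerate centered Gaussian process whose sample paths are bounded and uniformly continuous (the regularity forced by $P$-Donskerness), the supremum functional has no atoms: there is no atom at any positive level by the classical absolute-continuity results for suprema of Gaussian processes, and no atom at $0$ because $\prob(\lG f=0)=0$ for any $f$ with positive variance. Hence $F$ is continuous, Theorem~\ref{thm:asymptotic_efron} applies, and the ``for almost all $\alpha$'' clause is inherited directly from it, completing the argument.
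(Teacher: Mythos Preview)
Your proof is correct and follows essentially the same route as the paper: define $\psi(\lH)=\sup_{f\in\cF}\abs{\lH f}$, apply the continuous mapping theorem to the Donsker and bootstrap convergences (the latter via Proposition~\ref{prop:donsker}), invoke Theorem~\ref{thm:asymptotic_efron}, and rewrite the resulting scalar event as the uniform confidence band. Your final paragraph on the continuity of the law of $\sup_{f\in\cF}\abs{\lG f}$ addresses a hypothesis of Theorem~\ref{thm:asymptotic_efron} that the paper passes over in silence, so you are in fact slightly more careful than the original.
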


In the rest of this section, we introduce a sufficient condition for $\cF$ to be $P$-Donsker.
Let $\ee>0$, $\cG$ be a family of real-valued measurable functions on $\cX$, and $Q$ be a probability measure on $\cX$.
For any $f \in \cF$, if there exists $g \in \cG$ such that $\norm{f-g}_{L_{2}(Q)} \leq \ee$, where the $L_{2}(Q)$-norm is defined by $\norm{h}_{L_{2}(Q)}:=(\int_{\cX}\abs{h(x)}^{2}dQ(x))^{1/2}$, then $\cG$ is said to {\em cover} $\cF$ by $L_{2}(Q)$-balls with radius $\ee$.
A {\em covering number} $N(\ee, \cF, L_{2}(Q))$ of $\cF$ by $L_{2}(Q)$-balls with radius $\ee$ is defined by the minimum number to cover $\cF$:
\[
N(\ee, \cF, L_{2}(Q)):=\min\{\card{\cG} \mid \mbox{$\cG$ covers $\cF$ by $L_{2}(Q)$-balls with radius $\ee$}\}.
\]
If there exists a countable subset $\cG \subset \cF$ such that for any $f \in \cF$ there exists a sequence $\{g_{n}\}_{n \in \lN} \subset \cG$ such that $\lim_{n \to \infty}g_{n}(x) = f(x)$ for all $x \in \cX$, $\cF$ is said to be {\em pointwise measurable}.
Then, a sufficient condition for $P$-Donsker is given as follows:
\begin{proposition}[Theorem 19.14 in \cite{Vaart98}, Theorem 8.19 in \cite{Ko08}]
\label{prop:donsker_condition}
Let $\cF$ be a pointwise measurable family of measurable functions on $\cX$ satisfying
\[
\int_{0}^{1} \sqrt{\log \sup_{Q} N(\ee\norm{F}_{L_{2}(Q)}, \cF, L_{2}(Q))}d\ee<\infty,
\]
where $E$ is an envelope function of $\cF$ and $Q$ is taken over all probability measures such that $\norm{E}_{L_{2}(Q)}>0$.
If $\lE^{*}_{X \sim P}[E(X)^{2}] < \infty$, then $\cF$ is $P$-Donsker.
\end{proposition}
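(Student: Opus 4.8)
The plan is to establish the result through the standard reduction for weak convergence in $\ell^{\infty}(\cF)$: the class is $P$-Donsker once (i) the finite-dimensional marginals of $\lG_{n}$ converge to those of a centered Gaussian process with the stated covariance, and (ii) the sequence $\{\lG_{n}\}$ is asymptotically tight, the latter being equivalent to total boundedness of $\cF$ in $L_{2}(P)$ together with asymptotic equicontinuity, i.e. for every $\eta>0$,
\[
\lim_{\delta \downarrow 0} \limsup_{n \to \infty} \prob^{*}\pare{ \sup_{\norm{f-g}_{L_{2}(P)}<\delta} \abs{\lG_{n}f - \lG_{n}g} > \eta } = 0.
\]
Part (i) is immediate: the envelope bound $\abs{f} \leq E$ with $\lE^{*}_{X \sim P}[E(X)^{2}] < \infty$ places every $f \in \cF$ in $L_{2}(P)$, so each finite vector $(\lG_{n}f_{1},\ldots,\lG_{n}f_{k})$ converges to a centered normal vector by the ordinary multivariate central limit theorem, with covariance $Pf_{i}f_{j}-Pf_{i}Pf_{j}$. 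Total boundedness in $L_{2}(P)$ follows because the entropy integral hypothesis forces $\sup_{Q}N(\ee\norm{E}_{L_{2}(Q)},\cF,L_{2}(Q))<\infty$ for each fixed $\ee>0$, and specializing $Q=P$ bounds the $L_{2}(P)$-covering numbers of $\cF$.

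The core of the argument, and what I expect to be the main obstacle, is the asymptotic equicontinuity estimate. I would first apply the symmetrization inequality to the process restricted to the shell $\cF_{\delta}:=\{f-g \mid f,g \in \cF, ~ \norm{f-g}_{L_{2}(P)}<\delta\}$: with independent Rademacher signs $\sigma_{1},\ldots,\sigma_{n}$,
\[
\lE^{*} \sup_{h \in \cF_{\delta}} \abs{\lG_{n}h} \leq 2 \, \lE^{*} \sup_{h \in \cF_{\delta}} \abs{ \frac{1}{\sqrt{n}} \sum_{i=1}^{n} \sigma_{i} h(X_{i}) }.
\]
Conditionally on $X_{1},\ldots,X_{n}$, the symmetrized sum $h \mapsto n^{-1/2}\sum_{i}\sigma_{i}h(X_{i})$ has sub-Gaussian increments with respect to the random metric $\norm{\cdot}_{L_{2}(\lP_{n})}$, so Dudley's chaining bound controls its conditional expected supremum by the random entropy integral
\[
\lE_{\sigma} \sup_{h \in \cF_{\delta}} \abs{ \frac{1}{\sqrt{n}} \sum_{i=1}^{n} \sigma_{i} h(X_{i}) } \lesssim \int_{0}^{\theta_{n}} \sqrt{\log N(\ee, \cF_{\delta}, L_{2}(\lP_{n}))} \, d\ee,
\]
where $\theta_{n}$ is the $L_{2}(\lP_{n})$-diameter of $\cF_{\delta}$.

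The decisive step is to pass from this sample-dependent random entropy to the uniform entropy appearing in the hypothesis. Rescaling the radius by $\norm{E}_{L_{2}(\lP_{n})}$ and using the bound $N(\ee\norm{E}_{L_{2}(\lP_{n})},\cF,L_{2}(\lP_{n})) \leq \sup_{Q} N(\ee\norm{E}_{L_{2}(Q)},\cF,L_{2}(Q))$, which holds because $\lP_{n}$ is one admissible choice of $Q$, makes the integrand deterministic, so the entropy integral hypothesis supplies a finite, $n$-independent majorant for the chaining bound. Taking expectations over $X_{1},\ldots,X_{n}$, noting that $\theta_{n}/\norm{E}_{L_{2}(\lP_{n})}$ shrinks as $\delta \downarrow 0$ and that $\norm{E}_{L_{2}(\lP_{n})}$ is controlled in expectation through $\lE^{*}E^{2}<\infty$ and the law of large numbers, and dominating by the integrable envelope, one concludes that the expected modulus of continuity tends to $0$ as $\delta \downarrow 0$ uniformly in $n$; Markov's inequality then gives asymptotic equicontinuity, and combining with (i) and total boundedness yields $\lG_{n} \convd \lG$ in $\ell^{\infty}(\cF)$. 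I expect the technical heart to lie precisely in this random-to-uniform entropy conversion together with the measurability bookkeeping: the pointwise measurability of $\cF$ is exactly what renders the suprema outer-measurable and legitimizes the Fubini-type interchange of the Rademacher and $X$-expectations, so I would invoke it to justify each of the displayed inequalities.
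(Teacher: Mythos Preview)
Your sketch is essentially the standard proof of this uniform entropy Donsker theorem as it appears in the cited references (van der Vaart, Theorem 19.14; Kosorok, Theorem 8.19): symmetrization, conditional chaining with respect to the empirical $L_{2}(\lP_{n})$ metric, and replacement of the random entropy by the uniform entropy via the supremum over $Q$, with pointwise measurability used to justify the measurability of suprema and the Fubini interchanges. There is no gap in the overall strategy.

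However, you should be aware that the paper does not prove this proposition at all. It is stated purely as a citation of external results and is used as a black box in the proof of Proposition~\ref{prop:pwgk_donsker}. So there is nothing to compare your argument against within the paper itself; what you have written is a correct outline of the proof that lives in the references, not a reconstruction of anything the author does.
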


\subsection{Uniform confidence band for the expectation of a PWK vector}
By replacing variables in $V^{k,w}(D)(z)$, we define a new function on $\cD$ by
\[
f^{k,w}_{z}:\cD \to \lR, ~~ D \mapsto V^{k,w}(D)(z)=\sum_{x \in D}w(x)k(z,x).
\]
Then, we have $P f^{k,w}_{z}=\lE_{D \sim P}[V^{k,w}(D)](z)$ and $\lP_{n}f^{k,w}_{z}=n^{-1}\sum_{i=1}^{n} V^{k,w}(D_{i})(z)$.
Here, we restrict the domain of a point $z \in \lR^{2}$ to a region $A \subset \lR^{2}$ and the family of PWK functions on $A$ is denoted by $\cF^{k,w}_{A}:=\{f^{k,w}_{z}: \cD \to \lR \mid z \in A \}$.
As a corollary of Theorem \ref{thm:confi_interval}, we obtain the following:
\begin{corollary}
\label{cor:confi_interval_pwk_cor}
Let $k$ be a measurable bounded positive definite kernel on $\lR^{2}$, $w$ be a weight function for $\cD$, $P$ be a Radon probability measure on $\cD$, $D_{1},\ldots, D_{n}, \iid \sim P$, and $A$ be a subset in $\lR^{2}$.
If $\cF^{k,w}_{A}$ is $P$-Donsker and the envelope function $E$ of $\cF^{k,w}_{A}$ satisfies $\lE^{*}_{D \sim P}[E(D)^{2}] < \infty$, for almost all $\alpha \in (0,1)$, we have
\[
\lim_{n\to\infty}\prob\pare{ P f^{k,w}_{z} \in \bracket{ \lP_{n}f^{k,w}_{z}-\frac{\hat{\xi}^{*}_{n, \alpha}}{\sqrt{n}}, \lP_{n}f^{k,w}_{z}+\frac{\hat{\xi}^{*}_{n, \alpha}}{\sqrt{n}}} \mbox{ {\rm  for any} } z \in A} \geq 1-\alpha,
\]
where $\hat{\xi}^{*}_{n, \alpha}(\omega)$ is the upper $\alpha$-quantile of $\sup_{z \in A} \abs{ \lG_{n}^{*}f^{k,w}_{z}(\omega)}$ for $\omega \in \Omega$.
\end{corollary}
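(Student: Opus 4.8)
The plan is to obtain Corollary~\ref{cor:confi_interval_pwk_cor} as the specialization of Theorem~\ref{thm:confi_interval} to the measurable space $\cX=\cD$, the i.i.d.\ sample $X_{1},\ldots,X_{n}=D_{1},\ldots,D_{n}$, and the function class $\cF=\cF^{k,w}_{A}=\braces{f^{k,w}_{z}\mid z\in A}$. Under this choice the two sides of Theorem~\ref{thm:confi_interval} are, after the already recorded identities $Pf^{k,w}_{z}=\lE_{D\sim P}[V^{k,w}(D)](z)$ and $\lP_{n}f^{k,w}_{z}=n^{-1}\sum_{i=1}^{n}V^{k,w}(D_{i})(z)$, literally the two sides of the corollary; so the task is to confirm that the hypotheses transfer and that the index set $A$ may replace $\cF^{k,w}_{A}$ throughout. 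I therefore expect no new analytic input beyond what Theorem~\ref{thm:confi_interval} already supplies, and the work is essentially a verification of the dictionary between the abstract and the concrete objects.

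First I would check the measurability and envelope conditions. Each $f^{k,w}_{z}$ is bounded on $\cD$ uniformly in $z$: by Equation~\eqref{eq:norm_pwk} one has $\abs{f^{k,w}_{z}(D)}=\abs{\sum_{x\in D}w(x)k(z,x)}\leq\Bdd(k)\Bdd(w;\cD)$, so the constant $\Bdd(k)\Bdd(w;\cD)$ is an envelope $E$ for $\cF^{k,w}_{A}$ and automatically satisfies $\lE^{*}_{D\sim P}[E(D)^{2}]<\infty$; in other words the envelope hypothesis of the corollary is free here, and only the $P$-Donsker property of $\cF^{k,w}_{A}$ is a genuine assumption. Measurability of each $f^{k,w}_{z}\colon\cD\to\lR$ is precisely what the measurability of the kernel $k$ in the hypothesis is meant to guarantee; writing $f^{k,w}_{z}=\mathrm{ev}_{z}\circ V^{k,w}$ with the bounded evaluation functional $\mathrm{ev}_{z}=\inn{\cdot}{k(\cdot,z)}_{\cH_{k}}\in\cH_{k}^{*}$ also makes $Pf^{k,w}_{z}$ a finite integral.

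The one point worth isolating is the interchange $Pf^{k,w}_{z}=\lE_{D\sim P}[V^{k,w}(D)](z)$, which turns the scalar statement of Theorem~\ref{thm:confi_interval} into a statement about the RKHS-valued expectation: it is exactly the defining property of the Pettis integral applied to $\ell=\mathrm{ev}_{z}\in\cH_{k}^{*}$, since $\ell(\lE[V^{k,w}(D)])=\lE[\ell(V^{k,w}(D))]$ specializes to $\lE[V^{k,w}(D)](z)=\lE[V^{k,w}(D)(z)]$ via the reproducing property (Theorem~\ref{thm:moore}(3)). Finally, under the identification $f\leftrightarrow f^{k,w}_{z}$ the quantity $\sup_{f\in\cF^{k,w}_{A}}\abs{\lG_{n}^{*}f}$ equals $\sup_{z\in A}\abs{\lG_{n}^{*}f^{k,w}_{z}}$ and the event ``$Pf\in[\cdots]$ for all $f$'' equals ``$Pf^{k,w}_{z}\in[\cdots]$ for all $z\in A$'', so Theorem~\ref{thm:confi_interval} yields the asymptotic coverage exactly $1-\alpha$; weakening this to $\geq1-\alpha$ gives the stated confidence band. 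The only subtlety, and the place I would be careful, is when $z\mapsto f^{k,w}_{z}$ fails to be injective: then $A$ and $\cF^{k,w}_{A}$ differ as index sets, but the two suprema still agree because they range over the same set of values, so the identification is harmless.
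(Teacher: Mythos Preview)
Your proposal is correct and matches the paper's approach: the paper states Corollary~\ref{cor:confi_interval_pwk_cor} simply as ``a corollary of Theorem~\ref{thm:confi_interval}'' with no further argument, and your verification of the dictionary $\cX=\cD$, $\cF=\cF^{k,w}_{A}$, $Pf^{k,w}_{z}=\lE_{D\sim P}[V^{k,w}(D)](z)$ is exactly that specialization spelled out. Your side remark that the envelope bound $\abs{f^{k,w}_{z}(D)}\leq\Bdd(k)\Bdd(w;\cD)$ is automatic goes beyond what the corollary needs (it already \emph{assumes} the envelope condition) but anticipates the paper's Proposition~\ref{prop:pwgk_donsker}, where precisely this constant envelope is used.
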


The assumption for $\cF^{k,w}_{A}$ in Corollary \ref{cor:confi_interval_pwk_cor} is satisfied by a Lipschitz continuous positive definite kernel and a bounded dense subset in $\lR^{2}$.
\begin{proposition}
\label{prop:pwgk_donsker}
Let $k \in \Kc$, $w$ be a weight function for $\cD$, $P$ be a Radon probability measure on $\cD$, and $A$ is a bounded dense subset in $\lR^{2}$.
Then, $\cF^{k,w}_{A}$ is $P$-Donsker and there exists an envelope function $E$ of $\cF^{k,w}_{A}$ satisfying $\lE^{*}_{D \sim P}[E(D)^{2}] < \infty$.
\end{proposition}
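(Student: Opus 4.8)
The plan is to verify the two hypotheses of Proposition \ref{prop:donsker_condition}---a square-integrable envelope together with a finite uniform entropy integral---and, along the way, to exhibit the envelope demanded in the statement; the $P$-Donsker conclusion is then immediate. Throughout, each $f^{k,w}_{z}$ is a measurable function on $\cD$, since it is the composition of the (continuous) PWK map with evaluation at $z$, so $\cF^{k,w}_{A}$ is a genuine family of measurable functions.

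First I would produce a constant envelope. For any $z \in A$ and $D \in \cD$, boundedness of $k$ and of the weight give
\[
\abs{f^{k,w}_{z}(D)} = \abs{\sum_{x \in D} w(x) k(z,x)} \leq \Bdd(k) \sum_{x \in D} \abs{w(x)} \leq \Bdd(k)\Bdd(w;\cD) =: M,
\]
so the constant function $E \equiv M$ is an envelope for $\cF^{k,w}_{A}$. Because $E$ is constant, $\lE^{*}_{D \sim P}[E(D)^{2}] = M^{2} < \infty$ holds automatically, and moreover $\norm{E}_{L_{2}(Q)} = M$ for every probability measure $Q$ on $\cD$, which makes the radius bookkeeping in the entropy integral trivial.

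The technical heart is a Lipschitz estimate for the indexing map $z \mapsto f^{k,w}_{z}$ that is uniform in $D$. Using inequality \eqref{eq:kernel_ineq} exactly as in the proof of Proposition \ref{prop:separable}, for $z_{1}, z_{2} \in \lR^{2}$ one has $\abs{k(z_{1},x) - k(z_{2},x)} \leq \sqrt{\Bdd(k)}\,\Lip(k)\norm{z_{1}-z_{2}}_{\infty}$ for every $x$, whence, with $C := \sqrt{\Bdd(k)}\,\Lip(k)\,\Bdd(w;\cD)$,
\[
\abs{f^{k,w}_{z_{1}}(D) - f^{k,w}_{z_{2}}(D)} \leq C\norm{z_{1}-z_{2}}_{\infty}
\]
for all $D \in \cD$. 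Two consequences follow. Taking a countable dense subset $A_{0} \subset A$ (which exists since $\lR^{2}$ is separable) and approximating any $z \in A$ by $z_{n} \in A_{0}$, this bound forces $f^{k,w}_{z_{n}}(D) \to f^{k,w}_{z}(D)$ for every $D$, so $\{f^{k,w}_{z} : z \in A_{0}\}$ witnesses pointwise measurability of $\cF^{k,w}_{A}$. Integrating the same pointwise bound against any $Q$ gives $\norm{f^{k,w}_{z_{1}} - f^{k,w}_{z_{2}}}_{L_{2}(Q)} \leq C\norm{z_{1}-z_{2}}_{\infty}$, so an $\norm{\cdot}_{\infty}$-cover of $A$ by balls of radius $\ee M / C$ pushes forward to an $L_{2}(Q)$-cover of $\cF^{k,w}_{A}$ of radius $\ee M$, uniformly in $Q$.

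Finally I would bound the covering number. Since $A$ is bounded it lies in a cube of side $R$, so $\sup_{Q} N(\ee M, \cF^{k,w}_{A}, L_{2}(Q)) \leq N(\ee M/C, A, \norm{\cdot}_{\infty}) \leq (1 + RC/(\ee M))^{2}$, a polynomial bound in $1/\ee$ that is uniform in $Q$. Hence $\log \sup_{Q} N(\ee\norm{E}_{L_{2}(Q)}, \cF^{k,w}_{A}, L_{2}(Q)) = O(\log(1/\ee))$ as $\ee \to 0$, and since $\int_{0}^{1}\sqrt{\log(1/\ee)}\,d\ee < \infty$ the uniform entropy integral converges. With pointwise measurability and $\lE^{*}_{D\sim P}[E(D)^{2}] < \infty$ already in hand, Proposition \ref{prop:donsker_condition} shows $\cF^{k,w}_{A}$ is $P$-Donsker, completing the proof. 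The one place needing real care is this last step: the covering bound must stay independent of $Q$, which is exactly what the constant envelope and the $D$-uniform Lipschitz estimate deliver, and the boundedness of $A$ is essential here, since an unbounded index set would make the covering number infinite.
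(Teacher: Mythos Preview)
Your proof is correct and follows essentially the same route as the paper: a constant envelope $E\equiv\Bdd(k)\Bdd(w;\cD)$, the uniform Lipschitz bound $\abs{f^{k,w}_{z_1}(D)-f^{k,w}_{z_2}(D)}\leq\sqrt{\Bdd(k)}\Lip(k)\Bdd(w;\cD)\norm{z_1-z_2}_\infty$ obtained via inequality~\eqref{eq:kernel_ineq}, a covering of the bounded set $A$ in $\lR^2$ transferred to an $L_2(Q)$-cover of $\cF^{k,w}_A$ (the paper uses an explicit grid, you use the equivalent $(1+RC/(\ee M))^2$ bound), pointwise measurability via a countable dense subset of $A$, and finally Proposition~\ref{prop:donsker_condition}. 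The arguments differ only in cosmetic bookkeeping.
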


\begin{proof}
We define the envelope function of $\cF^{k,w}_{A}$ as $E(D) \equiv \Bdd(w;\cD) \Bdd(k)$ because it satisfies 
\[
\abs{f^{k,w}_{z}(D)}= \abs{\sum_{x \in D}w(x) k(z,x)} \leq \Bdd(w;\cD) \Bdd(k) =E(D) < \infty
\]
for any $z \in A$ and $D \in \cD$.
Then, the outer expectation of $E^{2}$ is bounded from above because $\lE^{*}_{D \sim P}[E(D)^{2}] \equiv (\Bdd(w;\cD) \Bdd(k))^{2} < \infty$.

For any $y,z \in \lR^{2}$, $D \in \cD$, and probability measure $Q$ on $\cD$, since 
\begin{align}
\abs{f^{k,w}_{y}(D)-f^{k,w}_{z}(D)}
&=\abs{\sum_{x \in D}w(x)k(y,x) - \sum_{x \in D}w(x)k(z,x)} \nonumber \\
&\leq \sum_{x \in D}\abs{w(x)} \abs{k(y,x) - k(z,x)}  \nonumber \\
& \leq \Bdd(w;\cD) \norm{k(y,\cdot) - k(z,\cdot)}_{\infty} \nonumber \\
&\leq \Bdd(w;\cD) \sqrt{\Bdd(k)} \norm{k(y,\cdot) - k(z,\cdot)}_{\cH_{k}} ~~ (\mbox{Equation \eqref{eq:kernel_ineq}})  \nonumber \\
&\leq \Bdd(w;\cD) \sqrt{\Bdd(k)} \Lip(k)\norm{y-z}_{\infty}, \label{eq:conti_donsker}
\end{align}
we have
\begin{align}
\norm{f^{k,w}_{y}-f^{k,w}_{z}}_{L_{2}(Q)} 
&\leq \Bdd(w;\cD) \Lip(k)\sqrt{\Bdd(k)}\norm{y-z}_{\infty}. \label{eq:donsker_1}
\end{align}
With respect to the bounded region $A$, there exist $a=(a_{1},a_{2}) \in \lR^{2}$ and $T>0$ such that $\tilde{A}=[a_{1}, a_{1} + T] \times [a_{2}, a_{2} + T]$ contains $A$.
For arbitrary small $\dd>0$, we assume that $N:=T/\dd$ is an integer without loss of generality.
Let $p_{i,j}:=(a_{1} + (i-1)\dd, a_{2} + (j-1)\dd) ~ (i,j=1,\ldots,N)$ be a grid point of $\tilde{A}$.
For any $z \in A$ and any probability measure $Q$ on $\cD$, there exists a grid point $p_{i,j}$ of $\tilde{A}$ such that $\norm{z-p_{i,j}}_{\infty} \leq \dd$, and then we have $\norm{f^{k,w}_{z}-f^{k,w}_{p_{i,j}}}_{L_{2}(Q)} \leq \Bdd(w;\cD) \Lip(k)\sqrt{\Bdd(k)} \dd$ from Equation \eqref{eq:donsker_1}.
Let $\ee=\Lip(k)\Bdd(k)^{-1/2}\dd$, then $\ee \norm{E}_{L_{2}(Q)}=\Bdd(w;\cD) \Lip(k)\sqrt{\Bdd(k)}\dd$.
Since $\cF^{k,w}_{\{p_{i,j}\}_{i,j=1}^{N}}$ covers $\cF^{k,w}_{A}$ by $L_{2}(Q)$-balls with radius $\ee \norm{E}_{L_{2}(Q)}$, the covering number $N(\ee\norm{F}_{L_{2}(Q)}, \cF^{k,w}_{A}, L_{2}(Q))$ is bounded from above by $N^{2}=(T / \dd)^{2} = (T \Lip(k)\Bdd(k)^{-1/2} \ee^{-1})^{2}$.
Now, we have\footnote{Without loss of generality, we can make $\log(T \Lip(k)\Bdd(k)^{-1/2})>0$ by retaking larger $T$ or $\Lip(k)$ if necessary. 
Remark that $- \log \ee \geq 0$ for $\ee \in (0,1]$ and $ \int_{0}^{1} \sqrt{- \log \ee }d\ee < \infty$.}
\begin{align*}
&\int_{0}^{1} \sqrt{\log \sup_{Q} N(\ee\norm{F}_{L_{2}(Q)}, \cF^{k,w}_{A}, L_{2}(Q))}d\ee \\
&\leq \int_{0}^{1} \sqrt{\log(T \Lip(k)\Bdd(k)^{-1/2} \ee^{-1} )^{2}}d\ee \\
& \leq \sqrt{2}\int_{0}^{1} \sqrt{\log(T \Lip(k)\Bdd(k)^{-1/2}) }d\ee+\sqrt{2} \int_{0}^{1} \sqrt{- \log \ee }d\ee \\
&<\infty.
\end{align*}
Let $z \in A$ be any position and $\{z_{n} \in A \cap \lQ^{2}\}_{n \in \lN}$ be a sequence satisfying $\lim_{n\to \infty}z_{n} = z$.
Since $A$ is dense in $\lR^{2}$, such a sequence exists.
Since the restriction $\cF^{k,w}_{A \cap \lQ^{2}}$ is countable and $\lim_{n\to \infty}f^{k,w}_{z_{n}}(D) = f^{k,w}_{z}(D)$ for any $D \in \cD$ from Equation \eqref{eq:conti_donsker}, $\cF^{k,w}_{A}$ is pointwise measurable.
Therefore, by Proposition \ref{prop:donsker_condition}, $\cF^{k,w}_{A}$ is shown to be $P$-Donsker.
\qed
\end{proof}
To sum up, the uniform confidence band for $\{ P f^{k,w}_{z}\}_{z \in A}$ is obtained as follows:
\begin{theorem}
\label{thm:confi_interval_pwk}
Let $k \in \Kc$, $w$ be a weight function for $\cD$, $P$ be a Radon probability measure on $\cD$, $D_{1},\ldots, D_{n}, \iid \sim P$, and $A$ is a bounded dense subset in $\lR^{2}$.
Then, for almost all $\alpha \in (0,1)$, we have
\[
\lim_{n\to\infty}\prob\pare{ P f^{k,w}_{z} \in \bracket{ \lP_{n}f^{k,w}_{z}-\frac{\hat{\xi}^{*}_{n, \alpha}}{\sqrt{n}}, \lP_{n}f^{k,w}_{z}+\frac{\hat{\xi}^{*}_{n, \alpha}}{\sqrt{n}}} \mbox{ {\rm  for any} } z \in A} = 1-\alpha,
\]
where $\hat{\xi}^{*}_{n, \alpha}(\omega)$ is the upper $\alpha$-quantile of $\sup_{z \in A} \abs{ \lG_{n}^{*}f^{k,w}_{z}(\omega)}$ for $\omega \in \Omega$.
\end{theorem}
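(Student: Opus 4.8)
The plan is to obtain the statement by specializing the abstract uniform confidence band result, Theorem \ref{thm:confi_interval}, to the concrete family $\cF^{k,w}_{A}=\{f^{k,w}_{z}\mid z\in A\}$ of PWK functions indexed by the bounded dense set $A$. Theorem \ref{thm:confi_interval} delivers exactly the desired equality $\lim_{n\to\infty}\prob(\cdots)=1-\alpha$ for any $P$-Donsker family whose envelope has finite outer second moment, so the only work is to check that $\cF^{k,w}_{A}$ meets these two hypotheses and to match up the notation.

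First I would invoke Proposition \ref{prop:pwgk_donsker}, whose hypotheses are precisely those assumed here ($k\in\Kc$, $w$ a weight function for $\cD$, $P$ a Radon measure on $\cD$, and $A$ bounded and dense in $\lR^{2}$). That proposition supplies the two ingredients needed: the family $\cF^{k,w}_{A}$ is $P$-Donsker, and it admits the constant envelope $E\equiv\Bdd(w;\cD)\Bdd(k)$ whose square has finite outer expectation $\lE^{*}_{D\sim P}[E(D)^{2}]<\infty$. The $P$-Donsker property rests on the finiteness of the uniform entropy integral, which in turn follows from the Lipschitz bound $\norm{f^{k,w}_{y}-f^{k,w}_{z}}_{L_{2}(Q)}\leq \Bdd(w;\cD)\Lip(k)\sqrt{\Bdd(k)}\norm{y-z}_{\infty}$ combined with a grid covering of the bounded set $A$; these are exactly the steps already carried out in the proof of Proposition \ref{prop:pwgk_donsker}.

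With both hypotheses verified, I would apply Theorem \ref{thm:confi_interval} to $\cF=\cF^{k,w}_{A}$ and translate the abstract quantities into the present notation: $Pf^{k,w}_{z}=\lE_{D\sim P}[V^{k,w}(D)](z)$, $\lP_{n}f^{k,w}_{z}=n^{-1}\sum_{i=1}^{n}V^{k,w}(D_{i})(z)$, and the bootstrap quantile $\hat{\xi}^{*}_{n,\alpha}$ taken from $\sup_{z\in A}\abs{\lG_{n}^{*}f^{k,w}_{z}}$ rather than from $\sup_{f\in\cF}\abs{\lG_{n}^{*}f}$ (these coincide since $\cF^{k,w}_{A}$ is indexed by $z\in A$). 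This yields the claimed limit, with the ``almost all $\alpha$'' restriction inherited unchanged from Theorem \ref{thm:confi_interval}, hence ultimately from the continuity of the distribution of the supremum of the limiting Gaussian process in Theorem \ref{thm:asymptotic_efron}. I would prefer to cite Theorem \ref{thm:confi_interval} directly rather than Corollary \ref{cor:confi_interval_pwk_cor}, since the theorem gives the exact equality whereas the corollary records only the inequality $\geq 1-\alpha$.

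Because essentially all the analytic content has already been discharged in Proposition \ref{prop:pwgk_donsker} and in the abstract machinery of Section \ref{subsec:confi_basic}, there is no genuine obstacle remaining; the theorem is a packaging step. If I had to flag the one point deserving care, it is the measurability issue hidden in the Donsker criterion, namely the pointwise measurability of $\cF^{k,w}_{A}$ obtained from the countable subfamily $\cF^{k,w}_{A\cap\lQ^{2}}$ together with the pointwise convergence $f^{k,w}_{z_{n}}(D)\to f^{k,w}_{z}(D)$ that follows from the continuity estimate \eqref{eq:conti_donsker}; this is what legitimizes treating the supremum over the uncountable index set $A$ as a bona fide random variable.
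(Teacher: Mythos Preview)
Your proposal is correct and matches the paper's approach exactly: the paper presents Theorem~\ref{thm:confi_interval_pwk} as an immediate consequence (``To sum up\ldots'') of Proposition~\ref{prop:pwgk_donsker} combined with the abstract Theorem~\ref{thm:confi_interval}, with no additional argument given. Your remark about preferring Theorem~\ref{thm:confi_interval} over Corollary~\ref{cor:confi_interval_pwk_cor} to obtain the equality is well taken.
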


A computation algorithm of the upper $\alpha$-quantile $\hat{\xi}^{*}_{n, \alpha}(\omega)$ in Theorem \ref{thm:confi_interval_pwk} is given as follows:
\begin{center}
\begin{minipage}{0.9\linewidth}
\begin{algorithm}[H]
\caption{The bootstrap algorithm for the expectation of the PWK vector}
\label{alg:bootstrap_pwgk}
\begin{algorithmic}[1]
\REQUIRE $k \in \Kc$. $w$: a weight function for $\cD$. $A$: a bounded dense subset in $\lR^{2}$. $\alpha \in (0, 1)$. $b \in \lN$. $D_{1},\ldots,D_{n}$:  realizations of i.i.d. samples drawn from $P$. 
\STATE Compute $\lP_{n}f^{k,w}_{z}:=n^{-1}\sum_{i=1}^{n}V^{k,w}(D_{i})(z)$.
\FOR{$\ell = 1,\ldots,b$}
\STATE Resample $n$ persistence diagrams $D^{*}_{1, \ell},\ldots,D^{*}_{n, \ell}$ from $\{D_{1},\ldots,D_{n}\}$. 
\STATE Compute $\lP^{*}_{n, \ell}f^{k,w}_{z}:=n^{-1} \sum_{i=1}^{n}V^{k,w}(D^{*}_{i, \ell})(z)$.
\STATE Compute $\hat{\eta}^{*}_{n,\ell}=\sup_{z \in A} \abs{\sqrt{n} \pare{ \lP^{*}_{n, \ell}f^{k,w}_{z}-\lP_{n} f^{k,w}_{z}}}$.
\ENDFOR
\STATE Compute the empirical distribution as 
\[
\tilde{\lF}_{n,b}(c):=\frac{1}{b} \sum_{\ell=1}^{b} \one_{[\hat{\eta}^{*}_{n,\ell},\infty) } (c),
\]
where $\one_{[s,t)}:\lR \to \{0, 1\}$ is the indicator function on $[s,t)$.
\RETURN $\tilde{\xi}_{n, \alpha, b} := \inf \braces{ c \remiddle \tilde{\lF}_{n,b}(c) \geq 1- \alpha }$.
\end{algorithmic}
\end{algorithm}
\end{minipage}
\end{center}

\section{Stability of the expectation of the PWK vector}
\label{sec:stability_expectation}
Let $m \in \lN$, $(M, d_{M})$ be a metric space, and $M^{m}=M \times \cdots \times M$ be the product metric space with the Hausdorff distance.
By regarding $M^{m}$ as a measurable set by its Borel $\sigma$-set, we can define a probability measure $\cP_{m}$ on $M^{m}$ and draw a sample $X \sim \cP_{m}$.
While a realization of the sample $X=(x_{1},\ldots,x_{m})$ is an element $M^{m}$, we can also view $X$ as an $m$-points set in $M$.
For another probability measure $\cQ_{m}$ on $M^{m}$ and a sample $Y \sim \cQ_{m}$, it is shown from Theorem \ref{thm:stability_ball} that
\begin{align}
\dW{\infty}(D_{q}(\lB(X(\omega))),D_{q}(\lB(Y(\omega))) \leq \dH(X(\omega),Y(\omega)) \mbox{ for any } \omega \in \Omega. \label{eq:stab_point_prob}
\end{align}
However, even if $\cP_{m}=\cQ_{m}$, there can exist $\omega \in \Omega$ such that $\dH(X(\omega),Y(\omega))$ takes a large value.
In such a situation, \cite{CFLMRW15} estimate Equation \eqref{eq:stab_point_prob} in a probabilistic sense by the Wasserstein distance for probability measures.
The {\em $p$-Wasserstein distance} $(1 \leq p < \infty)$ between probability measures $\mu$ and $\nu$ on $M$ is defined by
\begin{align}
W_{p}[d_{M}](\mu,\nu):= \inf_{\pi \in \Pi(\mu,\nu)} \pare{ \int_{M} d_{M}(x,y)^{p} d\pi(x,y)}^{\frac{1}{p}}, \label{eq:wasserstein_prob}
\end{align}
where $\Pi(\mu,\nu)$ is the set of couplings between $\mu$ and $\nu$.\footnote{A probability measure $\pi$ on $M \times M$ is called a coupling between $\mu$ and $\nu$ if, for a natural projection $p_{i}(x_{1},x_{2})=x_{i} ~ (i=1,2, ~ x_{i} \in M)$, their induced measures satisfy $(p_{1})_{*}\pi=\mu$ and $(p_{2})_{*}\pi=\nu$. }
Let $\phi:(\cF(M), \dH) \to (\cD(\lB(M)), \dW{\infty}), X \mapsto D_{q}(\lB(X))$ be the map from a point set to the persistence diagram, where $\cF(M)$ is the set of finite point sets in $M$, and $\phi_{*}\cP_{m}$ denote its induced measure of $\cP_{m}$.
Then, the stability theorem for point sets (Theorem \ref{thm:stability_ball}) is generalized as follows:
\begin{proposition}[Lemma 16 in the Supplementary Material of \cite{CFLMRW15}]
\label{prop:stab_prob_pd}
Let $(M,d_{M})$ be a metric space, $\cP_{m}$ and $\cQ_{m}$ be probability measures on $M^{m}$, and $1 \leq p < \infty$.
Then, we have
\begin{align}
W_{p}[\dW{\infty}](\phi_{*} \cP_{m}, \phi_{*} \cQ_{m}) \leq W_{p}[\dH](\cP_{m}, \cQ_{m}). \label{eq:stab_prob_pd}
\end{align}
\end{proposition}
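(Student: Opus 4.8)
The plan is to recognize this as the standard fact that pushing probability measures forward along a distance-nonincreasing map can only decrease the $p$-Wasserstein distance, the nonexpansiveness being exactly the content of the stability theorem (Theorem \ref{thm:stability_ball}). Throughout I identify a tuple $X=(x_{1},\ldots,x_{m}) \in M^{m}$ with the finite point set $\{x_{1},\ldots,x_{m}\}$, so that $\phi$ is read as a map $M^{m} \to \cD(\lB(M))$, $X \mapsto D_{q}(\lB(X))$, and $\phi_{*}\cP_{m}, \phi_{*}\cQ_{m}$ are the corresponding Borel pushforward measures. The single geometric input is the pointwise bound $\dW{\infty}(\phi(X),\phi(Y)) \leq \dH(X,Y)$ from Theorem \ref{thm:stability_ball}; everything else is bookkeeping with couplings.

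First I would fix an arbitrary coupling $\pi \in \Pi(\cP_{m},\cQ_{m})$, i.e.\ a Borel probability measure on $M^{m} \times M^{m}$ with $(p_{1})_{*}\pi = \cP_{m}$ and $(p_{2})_{*}\pi = \cQ_{m}$, and push it forward along $\phi \times \phi$ to get $\tilde{\pi}:=(\phi \times \phi)_{*}\pi$ on $\cD(\lB(M)) \times \cD(\lB(M))$. I would then check that $\tilde{\pi}$ is a coupling of the images: for any Borel $A \subset \cD(\lB(M))$,
\[
\tilde{\pi}\pare{A \times \cD(\lB(M))} = \pi\pare{\phi^{-1}(A) \times M^{m}} = \cP_{m}(\phi^{-1}(A)) = (\phi_{*}\cP_{m})(A),
\]
and symmetrically for the second marginal, so $\tilde{\pi} \in \Pi(\phi_{*}\cP_{m},\phi_{*}\cQ_{m})$. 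Thus $\pi \mapsto (\phi \times \phi)_{*}\pi$ carries $\Pi(\cP_{m},\cQ_{m})$ into $\Pi(\phi_{*}\cP_{m},\phi_{*}\cQ_{m})$.

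Next I would estimate. Because $\tilde{\pi}$ is an admissible coupling, the definition of the Wasserstein distance gives an upper bound that I rewrite via the change-of-variables formula for pushforwards and then bound pointwise by Theorem \ref{thm:stability_ball}:
\begin{align*}
W_{p}[\dW{\infty}](\phi_{*}\cP_{m}, \phi_{*}\cQ_{m})^{p}
&\leq \int_{\cD(\lB(M))^{2}} \dW{\infty}(D,E)^{p} \, d\tilde{\pi}(D,E) \\
&= \int_{(M^{m})^{2}} \dW{\infty}\pare{\phi(X), \phi(Y)}^{p} \, d\pi(X,Y) \\
&\leq \int_{(M^{m})^{2}} \dH(X,Y)^{p} \, d\pi(X,Y).
\end{align*}
Taking the infimum of the right-hand side over all $\pi \in \Pi(\cP_{m},\cQ_{m})$ and then $p$-th roots yields \eqref{eq:stab_prob_pd}; if the right-hand side is infinite the claim is trivial.

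The genuinely delicate points, which I would settle before running the estimate, are measurability. I would first argue that $\phi$ is continuous—it is nonexpansive for $\dW{\infty}$ by Theorem \ref{thm:stability_ball} once tuples are read as point sets, and the tuple-to-set assignment $M^{m} \to (\cF(M),\dH)$ is continuous—so $\phi$ is Borel measurable and the pushforwards $\phi_{*}\cP_{m}$ and $\tilde{\pi}$ are well defined. I would also record that the cost $\dW{\infty}\colon \cD(\lB(M)) \times \cD(\lB(M)) \to \lR$ is measurable, being continuous as a metric (genuinely a metric after passing to the quotient under $\sim$), so that all integrals above are meaningful and the change-of-variables identity applies. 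This measurability setup is the only nontrivial obstacle; the remainder is the routine infimum-over-couplings argument, whose sole substantive ingredient is the stability bound of Theorem \ref{thm:stability_ball}.
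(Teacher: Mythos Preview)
Your argument is correct: pushing a coupling forward along $\phi\times\phi$ gives a coupling of the image measures, and the pointwise stability bound $\dW{\infty}(\phi(X),\phi(Y))\le\dH(X,Y)$ from Theorem~\ref{thm:stability_ball} then controls the transport cost, yielding \eqref{eq:stab_prob_pd} upon taking the infimum. The paper does not supply its own proof of this proposition but simply cites Lemma~16 of \cite{CFLMRW15}, whose proof is exactly this pushforward-of-couplings argument; your version matches it, with the added care about measurability being a welcome clarification.
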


In addition, \cite{CFLMRW15} estimate the right hand side in Equation \eqref{eq:stab_prob_pd} for a specific class of probability measures on $M^{m}$.
Let $\mu$ and $\nu$ be probability measures on $M$.
Then, the product measures $\mu^{\otimes m}$ and $\nu^{\otimes m}$ are probability measures on $M^{m}$ and a sample $X=(x_{1},\ldots,x_{m}) \sim \mu^{\otimes m}$ can be seen as a set composed of samples $x_{1},\ldots,x_{m}, \iid \sim \mu$.
\begin{proposition}[Lemma 15 in the Supplementary Material of \cite{CFLMRW15}]
\label{prop:stab_prob_single}
Let $(M,d_{M})$ be a metric space, $\mu$ and $\nu$ be probability measures on $M$, and $1 \leq p < \infty$.
Then, we have
\begin{align*}
W_{p}[\dH](\mu^{\otimes m}, \nu^{\otimes m}) \leq m^{\frac{1}{p}} W_{p}[d_{M}](\mu, \nu).
\end{align*}
\end{proposition}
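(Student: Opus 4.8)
The plan is to build a coupling between $\mu^{\otimes m}$ and $\nu^{\otimes m}$ out of a single (near-)optimal coupling on $M$, and to control the Hausdorff distance pathwise through the coordinatewise matching this coupling provides. Concretely, fix $\ee>0$ and choose $\pi \in \Pi(\mu,\nu)$ with $\int_{M} d_{M}(x,y)^{p}\,d\pi(x,y) \leq W_{p}[d_{M}](\mu,\nu)^{p}+\ee$; such a $\pi$ exists because the integral is the infimum defining $W_{p}[d_{M}]$ (an exact optimal coupling may be used instead when one is available). I then form the product measure $\pi^{\otimes m}$ on $(M \times M)^{m}$, which I identify with a measure on $M^{m} \times M^{m}$ by grouping the first and second coordinates. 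Checking the two marginals shows $\pi^{\otimes m} \in \Pi(\mu^{\otimes m}, \nu^{\otimes m})$, so it is an admissible competitor in the infimum defining $W_{p}[\dH](\mu^{\otimes m}, \nu^{\otimes m})$.

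The decisive step is a pathwise estimate. For $X=(x_{1},\ldots,x_{m})$ and $Y=(y_{1},\ldots,y_{m})$, the index-preserving pairing $x_{i} \leftrightarrow y_{i}$ is in particular a correspondence between the two point sets, so for each $i$ one has $\inf_{j} d_{M}(x_{i},y_{j}) \leq d_{M}(x_{i},y_{i})$ and $\inf_{j} d_{M}(x_{j},y_{i}) \leq d_{M}(x_{i},y_{i})$. Taking suprema over $i$ in both terms of the Hausdorff distance yields
\[
\dH(X,Y) \leq \max_{1 \leq i \leq m} d_{M}(x_{i},y_{i}).
\]
Raising to the $p$-th power and bounding the maximum by the sum gives $\dH(X,Y)^{p} \leq \sum_{i=1}^{m} d_{M}(x_{i},y_{i})^{p}$; this is exactly where the factor $m$ (and hence $m^{1/p}$) is born.

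I then integrate against $\pi^{\otimes m}$. Since each coordinate pair $(x_{i},y_{i})$ has marginal $\pi$ under the product measure, Fubini gives $\int d_{M}(x_{i},y_{i})^{p}\,d\pi^{\otimes m} = \int d_{M}(x,y)^{p}\,d\pi$ for every $i$, so
\[
\int_{M^{m}} \dH(X,Y)^{p}\,d\pi^{\otimes m} \leq \sum_{i=1}^{m} \int_{M} d_{M}(x,y)^{p}\,d\pi = m \int_{M} d_{M}(x,y)^{p}\,d\pi \leq m\pare{W_{p}[d_{M}](\mu,\nu)^{p}+\ee}.
\]
Because $\pi^{\otimes m}$ is an admissible coupling, the left side dominates $W_{p}[\dH](\mu^{\otimes m},\nu^{\otimes m})^{p}$; taking $p$-th roots, letting $\ee \downarrow 0$, and using $(mc)^{1/p}=m^{1/p}c^{1/p}$ gives the claim.

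The main obstacle, such as it is, is the pathwise Hausdorff bound: one must notice that a coupling on $M$ supplies a labelled matching on $M^{m}$ and that the Hausdorff distance is controlled by \emph{any} single matching (not only the optimal one), which is what converts independence across the coordinates into the clean sum bound. Everything else—existence of a near-optimal $\pi$, admissibility of $\pi^{\otimes m}$, and the elementary $\max \leq \sum$ inequality—is routine, though one should be slightly careful to handle the two symmetric sup-terms in $\dH$ in the same way.
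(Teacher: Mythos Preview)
Your proof is correct and follows essentially the same route as the paper. The paper does not prove this proposition directly (it is cited from \cite{CFLMRW15}), but it proves the generalization in Proposition~\ref{prop:stab_prob_multi} by exactly the same mechanism: the pathwise bound $\dH(X,Y)^{p} \leq \sum_{i=1}^{m} d_{M}(x_{i},y_{i})^{p}$ (your $\max \leq \sum$ step, the paper's Equation~\eqref{eq:hausdorff_lp}), integration against the product coupling $\otimes_{i}\pi_{i}$, and then passing to the infimum; the only cosmetic difference is that the paper works with arbitrary couplings and takes the infimum at the end, whereas you fix an $\ee$-optimal $\pi$ up front and let $\ee\downarrow 0$.
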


In Section \ref{subsec:lattice}, we will apply a PWK vector to the perturbed lattice system.
Then, the class of a product measure composed of a single probability measure, like $\mu^{\otimes m}$, will turn out to be not suitable to analyze perturbed lattices.
\begin{example}
\label{exam:lattice}
Let $L:=\{(i,j) \in \lR^{2} \mid i,j=1,\ldots,m\}$ be a $2$-dimensional square lattice with finite size $m$ and $\mu$ be a probability measure on $\lR^{2}$ with mean zero.
A {\em perturbed lattice} of $L$ by $\mu$ is defined by $L_{\mu}:=\{x + e_{x} \mid x \in L \}$ where $\{e_{x}\}_{x \in L}$ is the set of i.i.d. samples drawn from $\mu$.
Then, the perturbed lattice $L_{\mu}$ is a sample drawn from $\otimes_{x \in L} \mu_{x}$ where $\mu_{x}=(s_{x})_{*}\mu$ is the induced measure of $\mu$ by a translation $s_{x}:\lR^{2} \to \lR^{2}, z \mapsto z + x$.
\end{example}

For a probability measure which is not composed of a single probability measures, like $\otimes_{x \in L} \mu_{x}$ in Example \ref{exam:lattice}, we will show a generalized stability theorem.
Let $\{\mu_{i}\}_{i=1}^{m}$ be a family of probability measures on $M$.
Then, the product measure $\otimes_{i=1}^{m}\mu_{i}$ is a probability measure on $M^{m}$, and a sample $X \sim \otimes_{i=1}^{m}\mu_{i}$ is interpreted as a point set $X=\{x_{1},\ldots,x_{m} \}$ composed of a sample $x_{i} \sim \mu_{i}$ for $i=1,\ldots,m$.
\begin{proposition}
\label{prop:stab_prob_multi}
Let $(M,d_{M})$ be a metric space, $\{\mu_{i}\}_{i=1}^{m}$ and $\{\nu_{i}\}_{i=1}^{m}$ be families of probability measures on $M$, and $1 \leq p < \infty$.
Then, we have
\begin{align*}
W_{p}[\dH](\otimes_{i=1}^{m}\mu_{i}, \otimes_{i=1}^{m}\nu_{i}) \leq \pare{\sum_{i=1}^{m} W_{p}[d_{M}](\mu_{i}, \nu_{i})^{p} }^{1/p}.
\end{align*}
\end{proposition}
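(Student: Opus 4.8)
The plan is to exhibit an explicit coupling between the two product measures and bound its transport cost, generalizing the argument behind Proposition \ref{prop:stab_prob_single} by allowing a different coupling in each coordinate. Fix $\ee>0$. For each $i=1,\ldots,m$, since $W_{p}[d_{M}](\mu_{i},\nu_{i})$ is an infimum over $\Pi(\mu_{i},\nu_{i})$, I would choose an $\ee$-optimal coupling $\pi_{i}\in\Pi(\mu_{i},\nu_{i})$ with
\[
\pare{\int_{M\times M} d_{M}(x_{i},y_{i})^{p}\, d\pi_{i}(x_{i},y_{i})}^{1/p}\leq W_{p}[d_{M}](\mu_{i},\nu_{i})+\ee.
\]
(If $M$ is Polish the infimum is attained and one may take $\ee=0$.) Then I would form the product $\pi:=\otimes_{i=1}^{m}\pi_{i}$ on $(M\times M)^{m}$ and reorganize the coordinates through the canonical identification $(M\times M)^{m}\cong M^{m}\times M^{m}$.

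Next I would check that, under this identification, $\pi$ is a coupling of $\otimes_{i=1}^{m}\mu_{i}$ and $\otimes_{i=1}^{m}\nu_{i}$: because $\pi$ is a product and each $\pi_{i}$ has marginals $\mu_{i}$ and $\nu_{i}$, its first marginal is $\otimes_{i=1}^{m}(p_{1})_{*}\pi_{i}=\otimes_{i=1}^{m}\mu_{i}$ and its second marginal is $\otimes_{i=1}^{m}(p_{2})_{*}\pi_{i}=\otimes_{i=1}^{m}\nu_{i}$. Hence $\pi\in\Pi(\otimes_{i}\mu_{i},\otimes_{i}\nu_{i})$ and it is admissible in the infimum defining $W_{p}[\dH]$, so its cost furnishes an upper bound.

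The geometric heart of the argument is the pointwise estimate
\[
\dH(\{x_{1},\ldots,x_{m}\},\{y_{1},\ldots,y_{m}\})\leq \max_{1\leq i\leq m} d_{M}(x_{i},y_{i}),
\]
which follows by unwinding the definition of the Hausdorff distance: for each $x_{i}$ one has $\Inf_{y\in Y} d_{M}(x_{i},y)\leq d_{M}(x_{i},y_{i})$, so the one-sided supremum $\sup_{x\in X}\Inf_{y\in Y} d_{M}(x,y)$ is at most $\max_{i} d_{M}(x_{i},y_{i})$, and symmetrically for the other one-sided term. Using $\dH(X,Y)^{p}\leq \max_{i} d_{M}(x_{i},y_{i})^{p}\leq \sum_{i=1}^{m} d_{M}(x_{i},y_{i})^{p}$ together with the product structure of $\pi$ (so that integrating $d_{M}(x_{i},y_{i})^{p}$ against $\pi$ reduces to integrating against $\pi_{i}$ alone, the remaining factors contributing unit mass), I would chain the estimates to obtain
\[
\int \dH(X,Y)^{p}\, d\pi \leq \sum_{i=1}^{m}\int_{M\times M} d_{M}(x_{i},y_{i})^{p}\, d\pi_{i}\leq \sum_{i=1}^{m}\pare{W_{p}[d_{M}](\mu_{i},\nu_{i})+\ee}^{p}.
\]
Taking $p$-th roots and letting $\ee\to 0$ yields $W_{p}[\dH](\otimes_{i}\mu_{i},\otimes_{i}\nu_{i})\leq \pare{\sum_{i=1}^{m} W_{p}[d_{M}](\mu_{i},\nu_{i})^{p}}^{1/p}$, the claimed bound.

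I do not expect a serious obstacle: the marginal computation and the reduction of the integral via the product structure are routine, and the only substantive step is the Hausdorff-versus-maximum inequality, which is itself elementary. The main care needed is bookkeeping the coordinate identification $(M\times M)^{m}\cong M^{m}\times M^{m}$ and, if one does not wish to assume attainment of the infimum, carrying the $\ee$ through to the final limit. As a sanity check, taking all $\mu_{i}=\mu$ and $\nu_{i}=\nu$ collapses the right-hand side to $m^{1/p}W_{p}[d_{M}](\mu,\nu)$, recovering Proposition \ref{prop:stab_prob_single}.
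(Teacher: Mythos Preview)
Your proof is correct and follows essentially the same approach as the paper: form the product coupling $\otimes_{i}\pi_{i}$, use the pointwise bound $\dH(X,Y)^{p}\leq\sum_{i}d_{M}(x_{i},y_{i})^{p}$, and reduce the integral via the product structure. The only cosmetic differences are that you carry an $\ee$-approximation and let $\ee\to 0$ where the paper takes arbitrary $\pi_{i}$ and then the infimum, and you treat both one-sided Hausdorff terms explicitly where the paper's Equation~\eqref{eq:hausdorff_lp} writes only one side.
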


\begin{proof}
Note that, for point sets $X=\{x_{1},\ldots,x_{m} \}$ and $Y=\{y_{1},\ldots,y_{m} \}$ in $M$, we have
\begin{align}
\dH(X,Y)^{p} 
&= \max_{i} \min_{j} d_{M}(x_{i},y_{j})^{p} \nonumber  \\
&\leq \sum_{i=1}^{m} \min_{j} d_{M}(x_{i},y_{j})^{p} ~~ (\norm{v}_{\infty} \leq \norm{v}_{p} \mbox{ for any $p\geq 1$ and $v \in \lR^{m}$}) \nonumber \\
&\leq \sum_{i=1}^{m} d_{M}(x_{i},y_{i})^{p}. \label{eq:hausdorff_lp}
\end{align}
Let $\pi_{i} \in \Pi(\mu_{i},\nu_{i}) ~ (i=1,\ldots,m)$ be any coupling.
Then, $\otimes_{i=1}^{m} \pi_{i} \in \Pi(\otimes_{i=1}^{m}\mu_{i}, \otimes_{i=1}^{m}\nu_{i})$ and we have
\begin{align*}
W_{p}[\dH](\otimes_{i=1}^{m}\mu_{i}, \otimes_{i=1}^{m}\nu_{i})^{p} 
&=\inf_{\pi \in \Pi(\otimes_{i=1}^{m}\mu_{i}, \otimes_{i=1}^{m}\nu_{i})} \int_{M^{m} \times M^{m}} \dH(X,Y)^{p} d\pi(X, Y) \\
&\leq \int_{M^{m} \times M^{m}} \dH(X,Y)^{p} d(\otimes_{i=1}^{m}\pi_{i})(X, Y) \\
&\leq \sum_{i=1}^{m} \int_{M \times M} d_{M}(x_{i},y_{i})^{p} d\pi_{i}(x_{i},y_{i}) ~~ (\mbox{Equation \eqref{eq:hausdorff_lp}}).
\end{align*}
By taking the infimum for each $\pi_{i}$, we have
\[
W_{p}[\dH](\otimes_{i=1}^{m}\mu_{i}, \otimes_{i=1}^{m}\nu_{i})^{p} \leq \sum_{i=1}^{m} W_{p}[d_{M}](\mu_{i},\nu_{i})^{p}.
\]
\qed
\end{proof}
Obviously, Proposition \ref{prop:stab_prob_single} is contained in Proposition \ref{prop:stab_prob_multi} by setting $\mu_{i} \equiv \mu$ and $\nu_{i} \equiv \nu$ for all $i=1,\ldots,m$.
From Theorem \ref{thm:kernel_stability}, the map $V^{k,w}:\cD \to \cH_{k}$ is shown to be Lipschitz continuous.
The following theorem is its generalization to probability distributions:
\begin{theorem}
\label{thm:stab_prob_pwk}
Let $P$ and $Q$ be Radon probability measures on $\cD$, $1 \leq p < \infty$, $k \in \Kc$, and $w\in \cW_{\infty}(\cD)$.
Then, we have
\begin{align*}
&\norm{\lE_{D \sim P}[V^{k,w}(D)] - \lE_{E \sim Q}[V^{k,w}(E)]}_{\cH_{k}} \\
&\leq (\Lip(k)\Bdd(w;\cD)+\sqrt{\Bdd(k)}L_{\infty}(w;\cD))W_{p}[\dW{\infty} ](P, Q).
\end{align*}
\end{theorem}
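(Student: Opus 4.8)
The plan is to bound the left-hand side by integrating the pointwise stability estimate of Theorem~\ref{thm:kernel_stability} against an arbitrary coupling of $P$ and $Q$, and then to convert the resulting $L^{1}$-type bound into the $p$-Wasserstein distance via H\"older's inequality. Throughout, write $C:=\Lip(k)\Bdd(w;\cD)+\sqrt{\Bdd(k)}L_{\infty}(w;\cD)$ for the Lipschitz constant appearing in Theorem~\ref{thm:kernel_stability}. Note first that since $k\in\Kc$ the RKHS $\cH_{k}$ is separable (Proposition~\ref{prop:separable}), the map $V^{k,w}:\cD\to\cH_{k}$ is continuous (indeed Lipschitz, by Theorem~\ref{thm:kernel_stability}), and $\norm{V^{k,w}(D)}_{\cH_{k}}\leq\sqrt{\Bdd(k)}\Bdd(w;\cD)$ uniformly in $D$ by Equation~\eqref{eq:norm_pwk}; consequently the pushforwards of the Radon measures $P$ and $Q$ under $V^{k,w}$ are Radon, and all expectations below exist as Bochner (hence Pettis) integrals.

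First I would fix an arbitrary coupling $\pi\in\Pi(P,Q)$, that is, a Radon probability measure on $\cD\times\cD$ with marginals $P$ and $Q$. The crucial identity to establish is
\[
\lE_{D\sim P}[V^{k,w}(D)] - \lE_{E\sim Q}[V^{k,w}(E)] = \lE_{(D,E)\sim \pi}[V^{k,w}(D) - V^{k,w}(E)].
\]
To verify this I would test both sides against an arbitrary $f\in\cH_{k}^{*}$: by the defining property of the Pettis integral together with the fact that the two marginals of $\pi$ are $P$ and $Q$, applying $f$ to the right-hand side yields $\lE_{P}[f(V^{k,w}(D))]-\lE_{Q}[f(V^{k,w}(E))]$, which is exactly $f$ applied to the left-hand side. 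Since the integrand is bounded and measurable and $\cH_{k}$ is separable, this Fubini-type identification is legitimate, and as it holds for every $f\in\cH_{k}^{*}$ the two vectors coincide.

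Next, using the inequality $\norm{\lE[U]}_{\cH_{k}}\leq\lE[\norm{U}_{\cH_{k}}]$ for the Pettis integral, followed by the stability estimate of Theorem~\ref{thm:kernel_stability} inside the integral, I obtain
\[
\norm{\lE_{D\sim P}[V^{k,w}(D)] - \lE_{E\sim Q}[V^{k,w}(E)]}_{\cH_{k}} \leq C\,\lE_{(D,E)\sim\pi}[\dW{\infty}(D,E)].
\]
Because $\pi$ is a probability measure and $p\geq 1$, H\"older's inequality gives $\lE_{\pi}[\dW{\infty}(D,E)]\leq(\lE_{\pi}[\dW{\infty}(D,E)^{p}])^{1/p}$. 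Finally, taking the infimum over all couplings $\pi\in\Pi(P,Q)$ turns the right-hand side into $C\,W_{p}[\dW{\infty}](P,Q)$ by the definition of the $p$-Wasserstein distance in Equation~\eqref{eq:wasserstein_prob}, which is precisely the claim.

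I expect the main obstacle to be the rigorous justification of the marginal-consistency identity in the second paragraph, namely replacing the difference of two vector-valued expectations over $P$ and $Q$ by a single expectation over the coupling $\pi$. This requires that a Radon coupling with the prescribed marginals exists on $\cD\times\cD$ and that $(D,E)\mapsto V^{k,w}(D)-V^{k,w}(E)$ is Bochner-integrable against it; both facts follow from the separability of $\cH_{k}$, the continuity of $V^{k,w}$, and the uniform norm bound~\eqref{eq:norm_pwk}, but this is where the measure-theoretic care is concentrated. The remaining steps --- pulling $C$ out through the stability theorem and the passage from the $L^{1}$ to the $L^{p}$ bound via H\"older --- are routine.
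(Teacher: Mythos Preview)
Your proof is correct and follows essentially the same route as the paper: fix a coupling $\pi\in\Pi(P,Q)$, rewrite the difference of expectations as a single integral against $\pi$, pull the norm inside via $\norm{\lE[V]}_{\cH_{k}}\leq\lE[\norm{V}_{\cH_{k}}]$, apply Theorem~\ref{thm:kernel_stability} pointwise, and then optimize over $\pi$. The only cosmetic difference is ordering: the paper raises the norm to the $p$-th power first and uses Jensen's inequality to push the power inside before invoking Theorem~\ref{thm:kernel_stability}, whereas you apply the stability bound first and then upgrade the resulting $L^{1}$ estimate to $L^{p}$ via H\"older; these are equivalent manipulations. Your added measure-theoretic remarks (separability of $\cH_{k}$, Bochner integrability, the dual-pairing verification of the marginal identity) are more explicit than what the paper writes but do not change the argument.
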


\begin{proof}
Let $\pi \in \Pi(P,Q)$ be any coupling, then we have
\begin{align*}
&\norm{\lE_{D \sim P}[V^{k,w}(D)] - \lE_{E \sim Q}[V^{k,w}(E)]}_{\cH_{k}}^{p} \\
&=\norm{ \int_{\cD}V^{k,w}(D)dP(D) -  \int_{\cD}V^{k,w}(E)dQ(E)}_{\cH_{k}}^{p} \\
&=\norm{ \int_{\cD \times \cD} \pare{ V^{k,w}(D) - V^{k,w}(E) }d \pi(D, E)}_{\cH_{k}}^{p} ~~ (\mbox{Definition of a coupling } \pi)\\
&\leq \pare{ \int_{\cD \times \cD} \norm{ V^{k,w}(D) - V^{k,w}(E) }_{\cH_{k}} d \pi(D, E)}^{p} ~~ (\| \lE[V] \|_{\cH_{k}} \leq \lE[\| V \|_{\cH_{k}}])\\
&\leq \int_{\cD \times \cD} \norm{ V^{k,w}(D) - V^{k,w}(E)}_{\cH_{k}}^{p}d \pi(D, E) ~~ (\mbox{Jensen's inequality}) \\
&\leq \int_{\cD \times \cD} (\Lip(k)\Bdd(w;\cD)+\sqrt{\Bdd(k)}L_{\infty}(w;\cD))^{p} \dW{\infty}(D,E)^{p}d \pi(D, E) ~~ (\mbox{Theorem \ref{thm:kernel_stability}}) \\
&= (\Lip(k)\Bdd(w;\cD)+\sqrt{\Bdd(k)}L_{\infty}(w;\cD))^{p}W_{p}[\dW{\infty} ](P, Q)^{p}.
\end{align*}
\qed
\end{proof}

Combining Proposition \ref{prop:stab_prob_pd}, Proposition \ref{prop:stab_prob_multi}, and Theorem \ref{thm:stab_prob_pwk}, the difference between two expectations of PWK vectors is estimated by the differences between probability distributions on $M$.
\begin{corollary}
\label{cor:stability_dist}
Let $(M,d_{M})$ be a metric space, $\{\mu_{i}\}_{i=1}^{m}$ and $\{\nu_{i}\}_{i=1}^{m}$ be families of Radon probability measures on $M$, $1 \leq p < \infty$, $k \in \Kc$, and $w \in \cW_{\infty}(\cD(\lB(M)))$.
Then, we have
\begin{align*}
&\norm{\lE_{X \sim \otimes_{i=1}^{m}\mu_{i}}[V^{k,w}(D_{q}(\lB(X)))]- \lE_{Y \sim \otimes_{i=1}^{m}\nu_{i} }[V^{k,w}(D_{q}(\lB(Y)))]}_{\cH_{k}} \\
&\leq (\Lip(k)\Bdd(w;\cD(\lB(M)))+\sqrt{\Bdd(k)}L_{\infty}(w;\cD(\lB(M))))\pare{\sum_{i=1}^{m} W_{p}[d_{M}](\mu_{i}, \nu_{i})^{p} }^{1/p}.
\end{align*}
\end{corollary}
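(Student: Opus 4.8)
The plan is to chain the three preparatory results—Theorem \ref{thm:stab_prob_pwk}, Proposition \ref{prop:stab_prob_pd}, and Proposition \ref{prop:stab_prob_multi}—through a single change-of-variables identity for the vector-valued expectation. Write $\cD = \cD(\lB(M))$ and abbreviate the Lipschitz constant as $C := \Lip(k)\Bdd(w;\cD)+\sqrt{\Bdd(k)}L_{\infty}(w;\cD)$. Set $\cP_{m} := \otimes_{i=1}^{m}\mu_{i}$ and $\cQ_{m} := \otimes_{i=1}^{m}\nu_{i}$, which are probability measures on $M^{m}$, and let $\phi:\cF(M) \to \cD$, $X \mapsto D_{q}(\lB(X))$ be the map appearing in Proposition \ref{prop:stab_prob_pd}.

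First I would recast the two expectations over point-set distributions as expectations over the induced diagram distributions. Since $V^{k,w}(D_{q}(\lB(X))) = (V^{k,w}\circ\phi)(X)$, the pushforward identity for the Pettis integral should give
\begin{align*}
\lE_{X \sim \cP_{m}}[V^{k,w}(D_{q}(\lB(X)))] = \lE_{D \sim \phi_{*}\cP_{m}}[V^{k,w}(D)],
\end{align*}
and likewise with $\cQ_{m}$ and the $\nu_{i}$. To justify this I would first confirm that $\phi_{*}\cP_{m}$ and $\phi_{*}\cQ_{m}$ are Radon probability measures on $\cD$—which follows because each $\mu_{i},\nu_{i}$ is Radon, so the products $\cP_{m},\cQ_{m}$ are Radon on $M^{m}$, and $\phi$ is continuous (indeed Lipschitz, by Theorem \ref{thm:stability_ball}), and pushforward by a continuous map preserves Radon-ness—and then verify the identity by testing against each $f \in \cH_{k}^{*}$, which reduces it to the ordinary scalar change of variables.

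With both expectations rewritten over $\phi_{*}\cP_{m}$ and $\phi_{*}\cQ_{m}$, the remaining step is a direct three-line estimate:
\begin{align*}
\norm{\lE_{D \sim \phi_{*}\cP_{m}}[V^{k,w}(D)] - \lE_{E \sim \phi_{*}\cQ_{m}}[V^{k,w}(E)]}_{\cH_{k}}
&\leq C \cdot W_{p}[\dW{\infty}](\phi_{*}\cP_{m}, \phi_{*}\cQ_{m}) \\
&\leq C \cdot W_{p}[\dH](\cP_{m}, \cQ_{m}) \\
&\leq C \pare{\sum_{i=1}^{m} W_{p}[d_{M}](\mu_{i},\nu_{i})^{p}}^{1/p},
\end{align*}
where the first inequality is Theorem \ref{thm:stab_prob_pwk} applied to $P = \phi_{*}\cP_{m}$ and $Q = \phi_{*}\cQ_{m}$, the second is Proposition \ref{prop:stab_prob_pd}, and the third is Proposition \ref{prop:stab_prob_multi}.

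The hard part will not be the chaining, which is mechanical, but the bookkeeping at the two interfaces where hypotheses must match. Concretely, I must confirm that the measures fed into Theorem \ref{thm:stab_prob_pwk} genuinely are Radon measures on the subspace $\cD = \cD(\lB(M))$ (so that the Pettis integrals exist and the theorem applies), and that $w \in \cW_{\infty}(\cD(\lB(M)))$ together with $k \in \Kc$ supply exactly the constant $C$ occurring in Theorem \ref{thm:stab_prob_pwk}. The vector-valued change-of-variables identity is the one place where care is genuinely needed; everything downstream is a direct substitution.
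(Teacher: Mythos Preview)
Your proposal is correct and matches the paper's approach exactly: the paper states the corollary as the combination of Proposition \ref{prop:stab_prob_pd}, Proposition \ref{prop:stab_prob_multi}, and Theorem \ref{thm:stab_prob_pwk}, without spelling out the chaining. If anything, you are more careful than the paper in justifying the Radon-ness of the pushforward measures and the vector-valued change-of-variables identity, which the paper leaves implicit.
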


\section{Numerical experiments}
\label{sec:numarical}
In this section, we apply the PWK vector to probability distributions and compare the performance among the PWK vector and other statistical methods for persistence diagrams.
All persistence diagrams are obtained from the ball model filtration, in dimension $1$, and computed by {\rm HomCloud}.\footnote{\url{http://www.wpi-aimr.tohoku.ac.jp/hiraoka_labo/homcloud/index.en.html}.}

Let $D_{1},\ldots,D_{n}$ be persistence diagrams with some property (A) and $E_{1},\ldots,E_{n}$ be persistence diagrams with some property (B).
Then, we consider to decide whether the properties (A) and (B) are the same or not from the sample sets $\{D_{i}\}_{i=1}^{n}$ and $\{E_{j}\}_{j=1}^{n}$.
This problem is called the {\em two sample problem} and we formalize this problem in terms of the hypothesis testing as follows:
Let $P$ and $Q$ be probability measures on $\cD$, then we define a null hypothesis by $H_{0}:P=Q$ and the alternative hypothesis by $H_{1}:P \neq Q$.
If $P=Q$, it is expected that $\norm{n^{-1}\sum_{i=1}^{n}V^{k,w}(D_{i}) - n^{-1}\sum_{i=1}^{n}V^{k,w}(E_{i}) }_{\cH_{k}}$ is small from Theorem \ref{thm:SLNN_pwk} and Theorem \ref{thm:stab_prob_pwk}, however, the problem is how small the threshold determines whether the null hypothesis $H_{0}$ is accepted or rejected.
To solve this, we apply the kernel method to the two sample problem \cite{GBRSS07,GBRSS12} (see also Appendix \ref{sec:ktst}) for persistence diagrams.
In order to measure the performance of the hypothesis testing procedure, we compute the type I and type II error for the testing, where the {\em type {\rm I} error} is defined by the probability that $H_{0}$ is rejected when $H_{0}$ is true and the {\em type {\rm II} error} is defined by the probability that $H_{0}$ is accepted when $H_{1}$ is true.
If a hypothesis testing procedure decides that any null hypothesis is rejected, then the type I error is $0$ but the type II error is $1$.
Hence, it is desired that both type I and type II error of a hypothesis testing procedure are small.

In the kernel two sample test, we use the following statistics
\begin{align*}
\mathrm{MMD}_{u}(\bm{D}_{n},\bm{E}_{n};K)^{2}:=& \frac{1}{n(n-1)}\sum_{i =1}^{n} \sum_{j \neq i}^{n} K(D_{i},D_{j}) \\
&+ \frac{1}{n(n-1)}\sum_{a=1}^{n}\sum_{b \neq a}^{n} K(E_{a},E_{b}) - \frac{2}{n^{2}}\sum_{i=1}^{n} \sum_{a=1}^{n}K(D_{i},E_{a}),
\end{align*}
where $K$ is a positive definite kernels $K$ on $\cD$, and an algorithm to compute the type I error is given as follows (see also Appendix \ref{sec:ktst}):
\begin{center} 
\begin{minipage}{0.9\linewidth}
\begin{algorithm}[H]
\caption{The algorithm for the type I error by the kernel method for persistence diagrams}
\label{alg:type_error_pd}
\begin{algorithmic}[1]
\REQUIRE $\alpha \in (0, 1)$. $q,m,N \in \lN$. $D_{1},\ldots,D_{n}$: i.i.d. samples drawn from $P$. $E_{1},\ldots,E_{n}$: i.i.d. samples drawn from $Q$. $K$: a positive definite kernel for persistence diagrams.
\STATE Compute the upper $\alpha$-quantile $\hat{\xi}_{n,1-\alpha}$ of $n\mathrm{MMD}_{u}(\bm{D}_{n}, \bm{E}_{n};K)^{2}$ by the bootstrap method on the aggregated data.
\FOR{$\ell=1,\ldots,N$}
\STATE Resample $m$ samples $D_{1,\ell},\ldots,D_{m,\ell}$ (resp. $E_{1,\ell},\ldots,E_{m,\ell}$) from $\{D_{1},\ldots,D_{n}\}$ (resp. $\{E_{1},\ldots,E_{n}\}$). 
\STATE Compute $m\mathrm{MMD}_{u}(\bm{D}_{m,\ell}, \bm{E}_{m,\ell};K)$.
\ENDFOR
\STATE Compute $\hat{p}:=N^{-1} \sum_{\ell=1}^{N} \one_{(m\mathrm{MMD}_{u}(\bm{D}_{m,\ell}, \bm{E}_{m,\ell};K) \leq \hat{\xi}_{n,1-\alpha})}$.
\RETURN $1-\hat{p}$. 
\end{algorithmic}
\end{algorithm}
\end{minipage}
\end{center}
The output of Algorithm \ref{alg:type_error_pd} is the type I error under $H_{0}$.
With respect to the type II error, we consider another null distribution $H_{0}^{*}:P \neq Q$ and set $\hat{p}$ as the type II error under $H_{0}$ where $1-\hat{p}$ is the type I error under $H_{0}^{*}$.

We remark that, when we apply Algorithm \ref{alg:type_error_pd} to $X_{1},\ldots,X_{n}, \iid \sim \cP$ and $Y_{1},\ldots,Y_{n}, \iid \sim \cQ$ where $\cP$ and $\cQ$ are  probability measures on the set $\cF(\lR^{d})$ of finite point sets in $\lR^{d}$, the null hypothesis $H_{0}:\phi_{*}\cP=\phi_{*}\cQ$ is also denoted by $H_{0}:\cP=\cQ$ in this section.

\subsection{Kernels on persistence diagrams}
In order to apply the kernel two sample test to persistence diagrams, we will define four positive definite kernels on persistence diagrams $\cD$: the PWK vector, a persistence landscape, the persistence scale-space kernel, and the Sliced Wasserstein kernel.\footnote{For comparison in machine learning tasks among the PWK vector, a persistence landscape, and the persistence scale-space kernel, please see \cite{KFH18}.}

\subsubsection{PWK vector}
We define a positive kernel of the PWK as the Gaussian kernel on the RKHS:
\begin{align}
K^{{\rm PW}}(D,E;k,w) =\exp\pare{-\frac{\norm{V^{k,w}(D)-V^{k,w}(E)}^{2}_{\cH_{k}}}{2 \tau^{2}}} \label{kernel_pwgk},
\end{align}
where $\tau$ is a positive parameter.
We fix a positive definite kernel on $\lR^{2}$ as the Gaussian kernel $k(x,y)=e^{-\frac{\norm{x-y}^{2}}{2\sigma^{2}}}$ and select a weight function from the unweighted function $w_{0}(x)\equiv 1$, the linear weight function $w_{1}(x)=\pers(x)$, and the arctangent weight function $w_{{\rm arc}}(x)=\arctan(C_{{\rm arc}} \pers(x)^{5})$.
All parameters $\sigma, C_{{\rm arc}}, \tau$ are computed as follows:\footnote{For the reason of fixing the parameters in this way, please also see \cite{KFH18}.}
For a collection of persistence diagrams $\cD=\{D_{\ell} \}_{\ell=1}^{n}$, we set 
\begin{align}
\sigma &=\median \{ \sigma(D_{\ell}) \}_{\ell=1}^{n}, ~ \sigma(D)=\median \{ \norm{x_{i}-x_{j}} \mid x_{i},x_{j} \in D, \ i<j \}, \label{eq:sigma} \\
C&=( \median \{\pers(D_{\ell})  \}_{\ell=1}^{n} )^{-p}, ~  \pers(D)=\median \{ \pers(x_{i}) \mid x_{i} \in D \}, \nonumber \\
\tau &= \median \braces{ \norm{V^{k,w}(D_{i})-V^{k,w}(D_{j})}_{\cH_{k}}  \remiddle 1 \leq i < j \leq n}. \label{eq:tau}
\end{align}

\subsubsection{Persistence landscape}
A {\em persistence landscape} \cite{Bu15} is defined by
\[
V^{{\rm PL}}(D) (k,t) := k \mbox{-th largest value of } \max \{ \min \{ t-b_{i},d_{i}-t\} , 0 \} ~~ (k \in \lN, ~ t \in \lR) \label{eq:landscape}
\]
and $V^{{\rm PL}}(D)$ is in $L^{2}(\lN \times \lR)$ when $\card{D}$ is finite.
From the construction, a generator with large (resp. small) persistence has a large (resp. small) value in the persistence landscape.
For example, the persistence landscape $V^{{\rm PL}}(D)(1,t)$ of a persistence diagram $D=\{(b,d)\}$ of a single generator has a peak at $(b+d)/2$ whose height is $(d-b)/2$, which is linear in the persistence.
By using the inner product of $L^{2}(\lN \times \lR)$, we consider the linear kernel on $L^{2}(\lN \times \lR)$:
\begin{align}
K^{{\rm PL}}(D,E) =\inn{V^{{\rm PL}}(D)}{V^{{\rm PL}}(E)}_{L^{2}(\lN \times \lR)}. \label{eq:kernel_pl}
\end{align}

\subsubsection{Persistence scale-space kernel}
The {\em persistence scale-space kernel} (PSSK) \cite{RHBK15} is defined by
\begin{align}
K^{{\rm PSS}}(D,E;t)=\frac{1}{8 \pi t} \sum_{x \in D} \sum_{y \in E}  e^{-\frac{ \norm{x-y}^{2} }{8 t}} - e^{-\frac{ \norm{x-\bar{y}}^{2} }{8 t}}, \label{eq:pssk}
\end{align}
where $\bar{y}:=(y^{2},y^{1})$ for $y=(y^{1},y^{2})$.
The vector representation of $D$ by the PSSK is $\Phi_{t}(D)(x)=\frac{1}{4\pi t} \sum_{y \in D} e^{-\frac{\norm{x-y}^{2}}{4 t}} - e^{-\frac{\norm{x-\bar{y}}^{2}}{4 t}}$, which is motivated from the solution of the heat diffusion equation for $D$, and the PSSK is given by the $L^{2}$-inner product, i.e., $K^{{\rm PSS}}(D,E;t)=\inn{\Phi_{t}(D)}{\Phi_{t}(E)}_{L^{2}(\ad)}$.
We set $t=\sigma^{2} / 2$, where $\sigma$ is of Equation \eqref{eq:sigma}, so as the kernel bandwidths in $V^{\kG,w}(D)$ and $\Phi_{t}(D)$ are the same.

As a modification of the PSSK to improve treatments of probability distributions on $\cD$, the {\em universal persistence scale-space kernel} ({\em u-PSSK}) \cite{KHNLB15} is proposed as follows:
\[
K^{{\rm PSS}}_{{\rm u}}(D,E;t):=\exp \pare{ \norm{V^{{\rm PSS}}_{t}(D)-V^{{\rm PSS}}_{t}(E)}_{L^{2}(\ad)}^{2}}.
\]

\subsubsection{Sliced Wasserstein kernel}
Another positive definite kernel for persistence diagrams uses an idea of the sliced Wasserstein distance.
For $\theta \in S^{1}$, $L(\theta)$ denotes the line $\{\lambda (\cos \theta, \sin \theta) \mid \theta \in \lR\}$ and $\pi_{\theta}:\lR^{2} \to L(\theta)$ denotes the orthogonal projection onto the line $L(\theta)$.
Let $p_{\DD}:\lR^{2} \to \DD$ be an orthogonal projection onto the diagonal.
Here, for finite nonnegative measures $\mu$ and $\nu$ on $\lR$ with the same mass, i.e., $\mu(\lR)=\nu(\lR)$, the generalized $1$-Wasserstein distance is denoted by
\[
\cW(\mu,\nu):=m \inf_{\pi \in \Pi(\mu',\nu')}\int_{\lR \times \lR} \abs{x-y} d\pi(x,y),
\]
where $m=\mu(\lR)$, $\mu'=\mu/m$, and $\nu'=\nu / m$.
Let $D$ and $E$ be finite persistence diagrams.
Then, the {\em Sliced Wasserstein distance} between $D$ and $E$ is defined by
\[
{\rm SW}(D,E):=\frac{1}{2\pi} \int_{S^{1}} \cW ( \sum_{x \in D} \dd_{\pi_{\theta}(x)}+ \sum_{y \in E} \dd_{\pi_{\theta} \circ p_{\DD} (y)},  \sum_{y \in E} \dd_{\pi_{\theta}(y)}+ \sum_{x \in D} \dd_{\pi_{\theta} \circ p_{\DD} (x)} )d\theta.
\]
The induced positive definite kernel from the Sliced Wasserstein distance
\[
K^{{\rm SW}}(D,E):=\exp \pare{ - \frac{{\rm SW} (D,E)}{2\tau^{2}}}
\]
is called the {\em Sliced Wasserstein kernel} \cite{CCO17}.

\subsection{Perturbed lattice}
\label{subsec:lattice}
We begin with the kernel two sample test for persistence diagrams of synthesized data.
Let $L:=\{(i,j) \mid i,j=1,\ldots,m_{L}\}$ be a $2$-dimensional square lattice with size $m_{L} \in \lN$.
As mentioned in Example \ref{exam:lattice}, a sample $X$ drawn from the product measure $\cP^{r}:=\bigotimes_{x \in L} \mathrm{Unif}({\rm Box}(x,r))$ is seen as a point set representing a perturbed lattice, where ${\rm Box}(x,r)=[x_{1} - r, x_{1} + r] \times [x_{2} - r, x_{2} + r]$ is a square region centered at $x=(x_{1},x_{2}) \in \lR^{2}$ with parameter $r>0$.
We also consider another probability measure $\cQ^{s}=\bigotimes_{x \in L} \cN_{2}(x,s^{2}I_{2})$ with parameter $s>0$.

We remark that, for $U_{1},U_{2}, \iid \sim \mathrm{Unif}([-r,r]) ~ (r>0)$, we have $(U_{1},U_{2}) \sim \mathrm{Unif}({\rm Box}(0,r))$ and the covariance matrix of $(U_{1},U_{2})$ is given by $(r^{2}/3) I_{2}$ since the variance of the uniform distribution on $[a,b] \subset \lR$ is $(b-a)^{2}/12$.
Thus, for any $x \in \lR^{2}$, $\mathrm{Unif}({\rm Box}(x,r))$ and $\cN_{2}(x,s^{2}I_{2})$ have the same mean and covariance matrix when $r^{2}=3s^{2}$.

In our experiment, we fix $s=0.1$ and consider the two sample problem where the null hypothesis is $H^{r}_{0}:\cP^{r}=\cQ^{0.1}$ and the alternative hypothesis is $H^{r}_{1}:\cP^{r} \neq \cQ^{0.1}$.
Since $\cP^{r}$ and $\cQ^{0.1}$ are not the same for any $r$, the null hypothesis $H^{r}_{0}$ should be rejected, and hence the type I error should be small.
We consider the $2$-dimensional square lattice $L$ with size $m_{L}=20$, take $n=50$ samples from each distribution $\cP^{r}$ and $\cQ^{0.1}$, and fix the parameters in Algorithm \ref{alg:type_error_pd} by $\alpha=0.01, q=1, m=40, N=1000$.\footnote{$\alpha$ is the significance level. $q$ is the dimension of persistence diagrams. $m$ is the number of sampling to compute the type I error empirically. $N$ is the number of trials to compute the type I error.}
Table \ref{table:test_lattice} shows the type I and type II errors of the kernel two sample test under the null hypothesis $H^{r}_{0}$.
\begin{table}[htbp]
\begin{center}
\caption{Each value in the table is the type I error (left) and the type II error (right).
$\star$ (resp. $\blacktriangle$) means the results of the PWK vector and the PSSK with the larger (resp. smaller) bandwidth in the Gaussian kernel on $\lR^{2}$ which is given by $10\sigma$ and $10t$ (resp. $\sigma/10$ and $t/10$). }
\label{table:test_lattice}
\begin{tabular}{|c|c||c|c|c||c|c|} \hline
\multicolumn{2}{|c||}{The null hypothesis}   &  $H^{0.1\sqrt{2}}$    & $H^{0.1\sqrt{3}}$ & $H^{0.2}$ 	& $\star$ $H^{0.1\sqrt{3}}$  & $\blacktriangle$ $H^{0.1\sqrt{3}}$     \\ \hline\hline
		     	        & $w_{{\rm 0}}$           &0.00/0.00		&0.00/0.00  	&0.00/0.00  	& 0.00/1.00   &0.00/0.01 \\ \cline{2-7}
PWK		       	        & $w_{{\rm 1}}$           &0.00/0.00		&0.00/0.00   	&0.00/0.00  	& 0.00/0.01   &0.00/0.00 \\ \cline{2-7}
    				& $w_{{\rm arc}}$        &0.00/0.00		&0.00/0.00   	&0.00/0.00  	& 0.00/0.45   &0.00/0.00 \\ \cline{1-7}
\multicolumn{2}{|c||}{PSSK}			  &0.00/0.00		&0.00/0.02  	&0.00/0.00 	& 0.00/0.03   &0.00/0.02 \\ \hline 
\multicolumn{2}{|c||}{universal PSSK}	 &0.00/0.00		&0.00/0.02  	&0.00/0.00 	& 0.00/0.02   &0.00/0.06 \\ \hline 
\multicolumn{2}{|c||}{Persistence landscape}&0.00/0.00	 	&0.00/0.91        &0.00/0.03  	& --  & --  \\ \hline     
\multicolumn{2}{|c||}{Sliced Wasserstein}   &0.00/0.00	  	&0.00/0.00   	&0.00/0.00  	& --  & --  \\ \hline     
\end{tabular}
\end{center}
\end{table}

With respect to $H^{0.1\sqrt{2}}_{0}$ and $H^{0.2}_{0}$, all methods correctly show low error rates.
The problem is $H^{0.1\sqrt{3}}_{0}$ because $\mathrm{Unif}({\rm Box}(x,0.1\sqrt{3}))$ and $\cN_{2}(x,(0.1\sqrt{3})^{2}I_{2})$ have the same mean and covariance matrix for any $x \in \lR^{2}$.
The PWK vector, the (universal) PSSK, and the Sliced Wasserstein kernel can detect the difference between $\cP^{0.1\sqrt{3}}$ and $\cQ^{0.1}$ correctly.
The result of a persistence landscape, showing the high type I error, is considered to be caused by its construction: a value in the persistence landscape is linearly dependent on the persistence.
On purpose, we try the testing with larger and smaller parameters $\sigma$ in $K^{{\rm PW}}$ and $t$ in $K^{{\rm PSS}}$, then the results get worse, which implies the selection of parameters is an important issue as also mentioned in \cite{KFH18}.

In order to focus on differences between the two distributions $\cP^{0.1\sqrt{3}}$ and $\cQ^{0.1}$, we will compare confidence intervals for the expectation of the PWK vector.
A uniform confidence band $\tilde{\xi}_{n,\alpha,b}$ is computed from Algorithm \ref{alg:bootstrap_pwgk}, where the parameters are fixed as $\alpha=0.05$ and $b=10^{4}$. 
For $x=(x_{1},x_{2}) \in \lR^{2}$ and $r>0$, we fix $\mathrm{Box}(x;r)$ as a bounded dense subset of $\lR^{2}$ and consider the set of confidence intervals for $\{\phi_{*}\cP f^{k,w}_{z} \mid z \in I_{0}^{20}(x,r)\}$ where $I_{s}^{t}(x,r)=\{(x_{1}, x_{2} + r(0.1i-1))\}_{i=s}^{t}$ $ (s < t \in \lN \cup \{0\})$ as an index set.\footnote{Note that $\phi_{*}\cP f^{k,w}_{z}$ is the value at $z$ of the expectation of the PWK vector, that is, $\phi_{*}\cP f^{k,w}_{z}=\lE_{X \sim \cP}[V^{k,w}(D_{q}(\lB(X)))](z)$.}
Since all generators in dimension $1$ of the $2$-dimensional square lattice $L$ appear at $1/2$ and disappear $\sqrt{2}/2$, we fix $x=(1/2,\sqrt{2}/2)$ and try $r=0.05$.
Then, the confidence intervals for $\phi_{*}\cP^{0.1\sqrt{3}} f^{k,w}_{z}$ and $\phi_{*}\cQ^{0.1} f^{k,w}_{z}$ do not have an intersection on $z \in I_{6}^{17}(x,r)$ for $w_{0}$, $z \in I_{0}^{20}(x,r)$ for $w_{1}$, and $z \in I_{8}^{11}(x,r)$ for $w_{{\rm arc}}$ (Figure \ref{fig:lattice_boot}). 
This result implies that a topological feature whose birth-death pair locates around $(1/2,\sqrt{2}/2)$ shows the difference between the $\cP^{0.1\sqrt{3}}$ and $\cQ^{0.1}$.
\begin{figure}[htbp]
\begin{center}
\caption{The red region is $\mathrm{Box}(x;r)$ (left). For each point $z$ in the index set $I_{s}^{t}(x,r)$ in $\mathrm{Box}(x;r)$ (center), we compare the confidence intervals for $\phi_{*}\cP^{0.1\sqrt{3}} f^{k,w}_{z}$ and $\phi_{*}\cQ^{0.1} f^{k,w}_{z}$.
This result uses $w_{0}$ and shows that there is no intersection between the confidence intervals on $z \in I_{6}^{17}(x,r)$.}
\label{fig:lattice_boot}
\includegraphics[width=1\textwidth]{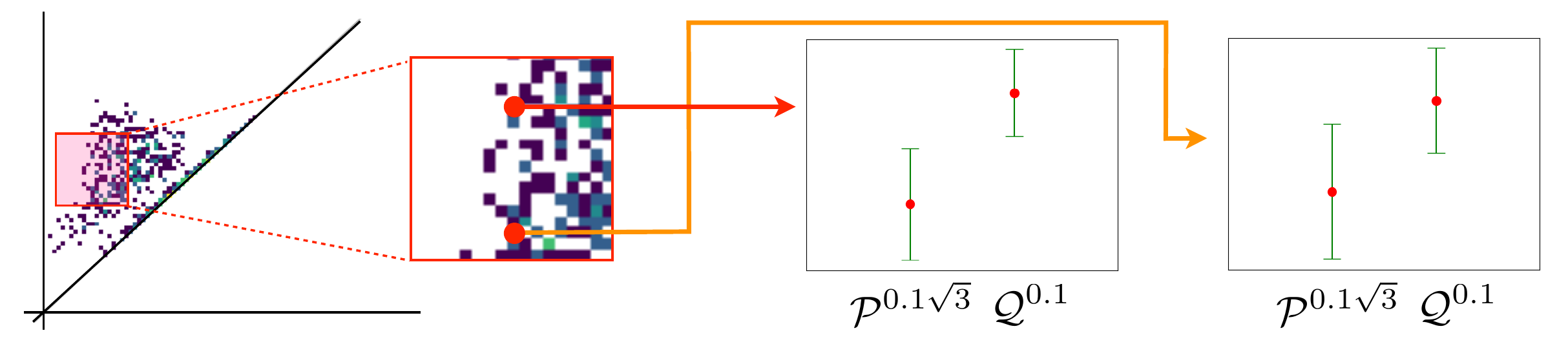}
\end{center}
\end{figure}

\subsection{Mat\'ern Hard-Core Point Process}
\label{subsec:matern}
The Mat\'ern hard-core point process \cite{Ma60} generates a random point sets, which is related to a mathematical model of a granular packing system in material science.
Let $\lambda>0$.
When $N$ is drawn from the Poisson distribution with intensity $\lambda$, the (random) point set 
\[
\{(a_{i},b_{i}) \mid i=1,\ldots,N, ~ a_{1},\ldots,a_{N},b_{1},\ldots, b_{N}, \iid \sim \mathrm{Unif} ([0,1])\}
\]
is denoted by $X^{\lambda}_{0}$ and the underlying probability distribution of $X^{\lambda}_{0}$ is denoted by $\cP^{\lambda}_{0}$, which is known as the {\em homgeneous Poisson point process} in $[0,1]^{2}$ with intensity $\lambda$.
Since $X^{\lambda}_{0} \sim \cP^{\lambda}_{0}$ is randomly scattered in $[0,1]^{2}$, the pairwise distance of some two points in $X^{\lambda}_{0}$ may be less than some value $R>0$.
\cite{Ma60} proposes two thinning rules so that all pairwise distances of any remaining two points are grater than or equal to $R$.

In the Type I way, a point $x_{i} \in X^{\lambda}_{0}$ is removed if there exists another point $x_{j} \in X^{\lambda}_{0}$ such that $\norm{x_{i}-x_{j}} < R$.
Then, the resulting point set is denoted by 
\[
X^{\lambda,R}_{1}:=\{x_{i} \in X^{\lambda}_{0} \mid \norm{x_{i}-x_{j}} > R \mbox{ for any } j \neq i\}
\]
and the corresponding probability distribution is denoted by $\cP^{\lambda,R}_{1}$.

In the Type II way, for $w_{1},\ldots,w_{N}, \iid \sim \mathrm{Unif}([0, 1])$, we use $w_{i}$ as a label of $x_{i} \in X^{\lambda}_{0}$.
While the Type I way always thins $x_{i}$ if the $R$-ball centered at $x_{i}$ contains another $x_{j}$, the Type II way does not thin $x_{i}$ if the label $w_{i}$ is greater than the other $w_{j}$ whose corresponding $x_{j}$ are in the $R$-ball centered at $x_{i}$.
Then, the resulting point set is denoted by 
\begin{align*}
X^{\lambda,R}_{2}:=\{x_{i} \in X^{\lambda}_{0}  \mid w_{i} > w_{j} \mbox{ for any } w_{j} \mbox{ such that } \norm{x_{i}-x_{j}} \leq R \}
\end{align*}
and the corresponding probability distribution is denoted by $\cP^{\lambda,R}_{2}$.
If $w_{1}>w_{2}$ and $\norm{x_{1}-x_{2}} \leq R$, we have $x_{1} \in X^{\lambda,R}_{2}$ and $x_{2} \not\in X^{\lambda,R}_{2}$, while both $x_{1}$ and $x_{2}$ are not in $X^{\lambda,R}_{1}$.
Thus, the Type II way remains more points than the Type I way.
\begin{figure}[htbp]
\begin{center}
\caption{From left to right, $X^{\lambda}_{0}, X^{\lambda,R}_{1}, X^{\lambda,R}_{2}$ (top) and the corresponding persistence diagrams in dimension $1$ (bottom).
In a red encircled region in the point sets, for two points whose distance is less than $R$, we can see that the Type I way removes both two points and the Type II way remains at least one points.} 
\label{fig:matern}
\includegraphics[width=0.95\textwidth]{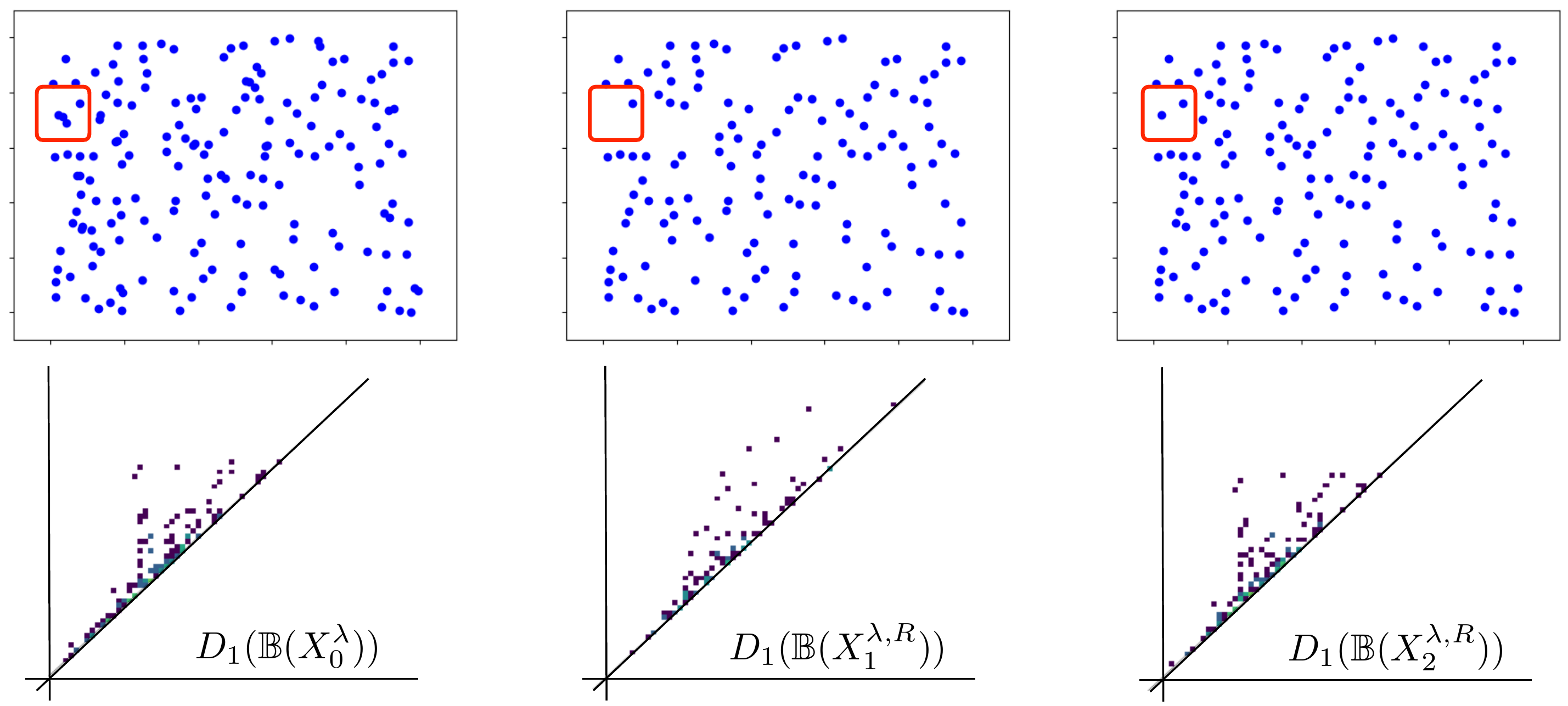}
\end{center}
\end{figure}
In Figure \ref{fig:matern}, we can see that the Type I way removes many small clusters of points in $X^{\lambda}_{0}$ and the Type II way remains at least one point from these clusters in $X^{\lambda}_{0}$, which implies that blank regions (large topological features) in $X^{\lambda}_{0}$ and $X^{\lambda,R}_{2}$ are similar and the differences between $X^{\lambda}_{0}$ and $X^{\lambda,R}_{2}$ are seen in small clusters of points (small topological features).
We fix $\lambda=200$ and $R=0.03$, take $n=50$ samples from each distributions $\cP^{\lambda}_{0}, \cP^{\lambda,R}_{1}, \cP^{\lambda,R}_{2}$, and fix the parameters in Algorithm \ref{alg:type_error_pd} by $\alpha=0.01, q=1, m=40, N=1000$.

\begin{table}[htbp]
\begin{center}
\caption{Each value in this table is the type I error (left) and the type II error (right).}
\label{table:test_matern}
\begin{tabular}{|c|c||c|c|c|} \hline
\multicolumn{2}{|c||}{The null hypothesis}     & $\cP^{\lambda}_{0}=\cP^{\lambda,R}_{1}$ & $\cP^{\lambda}_{0}=\cP^{\lambda,R}_{2}$   & $\cP^{\lambda,R}_{1}=\cP^{\lambda,R}_{2}$   \\ \hline\hline
		     		& $w_{{\rm 0}}$           &0.00/0.00				&0.001/0.00  			&0.00/0.00 \\ \cline{2-5}
PWK		       	    	& $w_{{\rm 1}}$           &0.00/0.00				&0.00/0.75   			&0.00/0.00 \\ \cline{2-5}
    				& $w_{{\rm arc}}$        &0.00/0.00				&0.00/0.91   			&0.00/0.00 \\ \cline{1-5}
\multicolumn{2}{|c||}{PSSK}			  &0.00/0.00				&0.00/0.84 			&0.00/0.00    \\ \hline     
\multicolumn{2}{|c||}{universal PSSK}	  &0.00/0.00				&0.00/0.96 			&0.00/0.00    \\ \hline     
\multicolumn{2}{|c||}{Persistence landscape}&0.00/0.00				&0.00/0.99 			&0.00/0.01    \\ \hline     
\multicolumn{2}{|c||}{Sliced Wasserstein}    &0.00/0.00				&0.00/0.98			&0.00/0.00    \\ \hline    
\end{tabular}
\end{center}
\end{table}

Table \ref{table:test_matern} shows that the PWK vector with $w_{0}$ only correctly classifies $\cP^{\lambda}_{0}$ and $\cP^{\lambda,R}_{2}$.
This is an interesting result because most of the research in statistical TDA regard a generator with large persistence as an important feature and a generator with small persistence as a noisy feature, which is a common view of a persistence diagram in TDA community, but $w_{0}$ treat them equivalently.
Since the Type II way remains relatively large topological features and changes relatively small topological features by its thinning rule (Figure \ref{fig:matern}), it is considered that generators with small persistence cause the difference between two persistence diagrams of $\cP^{\lambda}_{0}$ and $\cP^{\lambda,R}_{2}$.
To see this, we focus on a point close to the diagonal set and compare the confidence intervals where the index set is set as $I_{1}^{20}(x,r)$ for $x=(0.03, 0.04)$ and $r=0.01$ (Figure \ref{fig:matern_boot}).
Then, the confidence intervals for $\phi_{*}\cP^{\lambda}_{0} f^{k,w}_{z}$ and $\phi_{*}\cP^{\lambda,R}_{2}f^{k,w}_{z}$ have intersections for on $z \in I_{0}^{20}(x,r)$ for $w_{1}$ and $z \in I_{14}^{19}(x,r)$ for $w_{{\rm arc}}$.
On the other hand, the confidence intervals by $w_{0}$ do not have an intersection on any $z \in I_{0}^{20}(x,r)$.
We remark that, when we select $x=(0.05, 0.07)$ whose persistence is lager than $(0.03,0.04)$, all confidence intervals by $w_{0},w_{1}, w_{{\rm arc}}$ have intersections.
These results imply a topological feature whose persistence is relatively small will describe the difference between $\cP^{\lambda}_{0}$ and $\cP^{\lambda,R}_{2}$.

\begin{figure}[htbp]
\begin{center}
\caption{The red region is $\mathrm{Box}((0.03,0.04);0.01)$ (left).
For $z=(0.03,0.04)$, the confidence intervals for $\phi_{*}\cP^{\lambda}_{0} f^{k,w}_{z}$ and $\phi_{*}\cP^{\lambda,R}_{2}f^{k,w}_{z}$ are shown (right).}
\label{fig:matern_boot}
\includegraphics[width=1\textwidth]{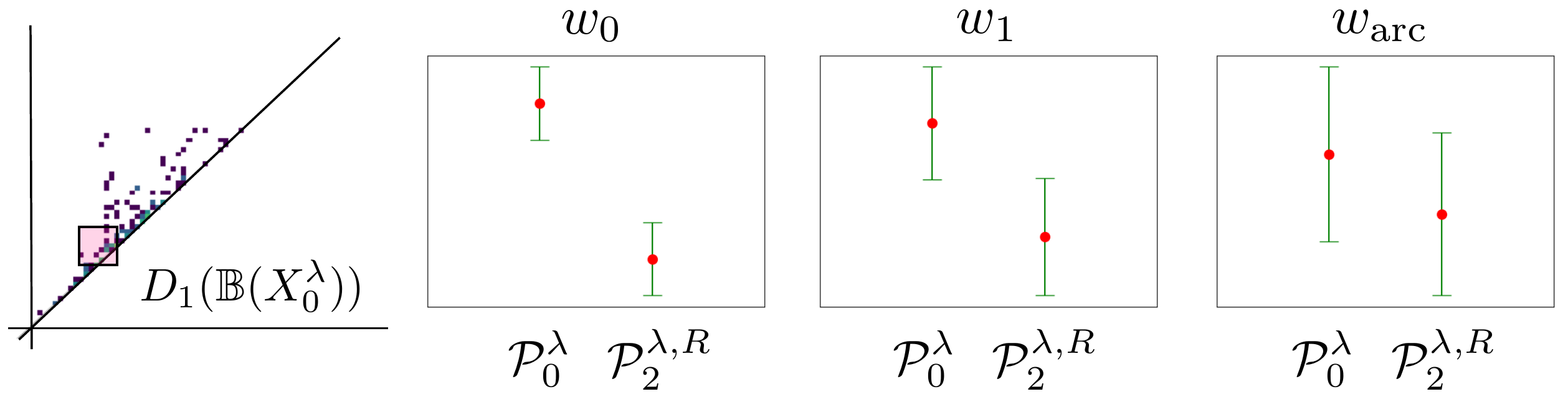}
\end{center}
\end{figure}

\section{Conclusion}
\label{sec:conclusion}
In this paper, for the PWK vector, we have shown the strong law of large numbers, the central limit theorem, and the stability of the expectation and constructed a uniform confidence band for the expectation.
Since the PWK vector is composed of a positive definite kernel $k$ and a weight function $w$, we also have characterized classes of $k$ and $w$, which are needed to satisfy the above theorems.
In numerical experiments, we have discussed the role of a weight function in the two sample test and shown the advantage of the PWK vector over other statistical methods.
Results in Section \ref{subsec:matern} give us an interesting counterexample to the common view in TDA that points in a persistence diagram far from the diagonal are always seen as important features.

\subsection*{acknowledgements}
This work is supported by JSPS Research Fellow (17J02401).

\appendix
\section{Kernel two sample test}
\label{sec:ktst}
In this section, we briefly review the kernel two sample test, following \cite{GBRSS07,GBRSS12}.
\subsubsection*{Gereneral framework of the two sample test}
Let $(\cX, \cB_{\cX})$ be a topological space, $P$ and $Q$ be probability distributions on $\cX$, $X_{1},\ldots,X_{m}, \iid \sim P$, $Y_{1},\ldots, Y_{n}, \iid \sim Q$, and $\theta(\bm{X}_{m}, \bm{Y}_{n})$ be a statistics.
If $H_{0}$ is true, the statistics $\theta(\bm{X}_{m}, \bm{Y}_{n})$ depends only on $P$ because $Y_{1},\ldots, Y_{n}, \iid \sim Q=P$.
Here, we assume that the upper $\alpha$-quantile $\hat{\xi}_{m,n,\alpha}$ which satisfies $\prob(\theta(\bm{X}_{m}, \bm{Y}_{n}) \leq \hat{\xi}_{m,n,\alpha})=1-\alpha$ is computable when $H_{0}$ is true.
When $\alpha$ is set a small value, if a pair of realizations $(\bm{x}_{m}, \bm{y}_{n})$ of $(\bm{X}_{m},\bm{Y}_{n})$ satisfies $\hat{\xi}_{m,n, 1-\alpha/2} \leq \theta(\bm{x}_{m}, \bm{y}_{n}) \leq \hat{\xi}_{m,n,\alpha/2}$, we conclude that the hypothesis $H_{0}$ is {\em accepted} under $H_{0}$.
Otherwise, we conclude that $H_{0}$ is {\em rejected}.
The threshold $\alpha$ is called the {\em significance level} and $\alpha=0.01$ or $0.05$ is often used.

\subsubsection*{Kernel method for the two sample problem}
Let $k$ be a measurable positive definite kernel on $\cX$ satisfying $\int_{\cX}\int_{\cX} k(x,y)^{2}dP(x)dQ(y) < \infty$.
In \cite{GBRSS07,GBRSS12}, a statistics
\begin{align}
&\mathrm{MMD}_{u}(\bm{X}_{m},\bm{Y}_{n};k)^{2} \nonumber \\
&:= \frac{1}{m(m-1)}\sum_{i =1}^{m} \sum_{j \neq i}^{m} k(X_{i},X_{j}) + \frac{1}{n(n-1)}\sum_{a=1}^{n}\sum_{b \neq a}^{n} k(Y_{a},Y_{b}) - \frac{2}{mn}\sum_{i=1}^{m} \sum_{a=1}^{n}k(X_{i},Y_{a}) \label{eq:mmd_u}
\end{align}
is used to the two sample test and the distribution function is given as follows:
\begin{theorem}[Theorem 8 in \cite{GBRSS07}, Theorem 12 in \cite{GBRSS12}]
\label{thm:wchi_dist}
Under the null hypothesis $H_{0}$, $n\mathrm{MMD}_{u}(\bm{X}_{n},\bm{Y}_{n};k)^{2} \convd \sum_{i=1}^{\infty}\lambda_{i}(z_{i}^{2}-2)$ where $z_{1},\ldots, \iid \sim \cN(0,2)$, $\{\lambda_{i}\}_{i=1}^{\infty}$ are the solutions to the eigenvalue equation
\begin{align}
\int_{\cX} \tilde{k}(x, x')\psi_{i}(x)dP(x) = \lambda_{i} \psi_{i}(x'), \label{eq:mmd_eigen}
\end{align}
and $\tilde{k}(x_{i},x_{j})=k(x_{i},x_{j})-\int_{\cX}k(x_{i},x)dP(x)-\int_{\cX}k(x,x_{j})dP(x)-\int_{\cX}\int_{\cX}k(x,x')dP(x)dP(x')$.
\end{theorem}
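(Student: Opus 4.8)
The plan is to recognize $n\mathrm{MMD}_{u}(\bm{X}_{n},\bm{Y}_{n};k)^{2}$ as a degenerate two-sample $U$-statistic under $H_{0}$ and to compute its limit through the spectral decomposition of the centered kernel $\tilde{k}$. First I would show that, under $H_{0}$ where $P=Q$ so that the function $c_{1}(x):=\int_{\cX}k(x,x')dP(x')$ and the constant $c_{0}:=\int_{\cX}\int_{\cX}k\,dP\,dP$ are common to both samples, the three blocks of \eqref{eq:mmd_u} may be rewritten with $\tilde{k}$ in place of $k$ \emph{exactly}. Substituting $k=\tilde{k}+c_{1}(x)+c_{1}(x')-c_{0}$, the linear contributions of the $XX$ and $YY$ blocks, namely $+\tfrac{2}{n}\sum_{i}c_{1}(X_{i})$ and $+\tfrac{2}{n}\sum_{a}c_{1}(Y_{a})$, are cancelled by the $XY$ block $-\tfrac{2}{n}\sum_{i}c_{1}(X_{i})-\tfrac{2}{n}\sum_{a}c_{1}(Y_{a})$, and the constant contributions $-c_{0}-c_{0}+2c_{0}$ cancel as well. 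This removes the Hoeffding first-order projection and exposes the degeneracy of the statistic.

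Next I would invoke Mercer's theorem for the integral operator with kernel $\tilde{k}$ on $L^{2}(P)$: the hypothesis $\int\int k^{2}\,dP\,dQ<\infty$ makes this operator Hilbert--Schmidt, so it admits the eigendecomposition $\tilde{k}(x,x')=\sum_{l}\lambda_{l}\psi_{l}(x)\psi_{l}(x')$ with $\{\psi_{l}\}$ orthonormal in $L^{2}(P)$ and, because $\tilde{k}$ is centered, $\int_{\cX}\psi_{l}\,dP=0$; these $\lambda_{l},\psi_{l}$ are exactly the solutions of \eqref{eq:mmd_eigen}. Substituting the expansion and writing $u_{l}:=n^{-1/2}\sum_{i}\psi_{l}(X_{i})$ and $v_{l}:=n^{-1/2}\sum_{a}\psi_{l}(Y_{a})$, the $l$-th component of $n\mathrm{MMD}_{u}^{2}$ equals $\lambda_{l}\bigl[\tfrac{n}{n-1}u_{l}^{2}-\tfrac{1}{n-1}\sum_{i}\psi_{l}(X_{i})^{2}+\tfrac{n}{n-1}v_{l}^{2}-\tfrac{1}{n-1}\sum_{a}\psi_{l}(Y_{a})^{2}-2u_{l}v_{l}\bigr]$.

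For a fixed truncation level $K$, the multivariate central limit theorem applied to the i.i.d.\ mean-zero, unit-variance, mutually uncorrelated variables $\psi_{l}(X_{i})$ and $\psi_{l}(Y_{a})$ yields joint convergence of $(u_{1},\dots,u_{K},v_{1},\dots,v_{K})$ to $2K$ independent $\cN(0,1)$ variables, while the law of large numbers gives $\tfrac1n\sum_{i}\psi_{l}(X_{i})^{2}\to1$. By the continuous mapping theorem the truncated statistic converges to $\sum_{l=1}^{K}\lambda_{l}\bigl[(u_{l}^{2}-1)+(v_{l}^{2}-1)-2u_{l}v_{l}\bigr]=\sum_{l=1}^{K}\lambda_{l}\bigl[(u_{l}-v_{l})^{2}-2\bigr]$. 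Setting $z_{l}:=u_{l}-v_{l}$, which is $\cN(0,2)$ and independent across $l$, this equals $\sum_{l=1}^{K}\lambda_{l}(z_{l}^{2}-2)$, matching the claimed limit.

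The main obstacle is the passage from the truncated sum to the full series: I would need a tail estimate showing that the remainder $\sum_{l>K}\lambda_{l}(\cdots)$ is uniformly in $n$ negligible as $K\to\infty$. This rests on the Hilbert--Schmidt summability $\sum_{l}\lambda_{l}^{2}<\infty$ (equivalently $\lE[\tilde{k}(X,X')^{2}]<\infty$), which bounds the second moment of the tail quadratic form; combining this with the finite-$K$ convergence above through a standard triangular-array argument (letting $K\to\infty$ after $n\to\infty$ while controlling the truncation error in probability) gives convergence in distribution to $\sum_{l=1}^{\infty}\lambda_{l}(z_{l}^{2}-2)$. Alternatively one may bypass the explicit truncation by citing the general limit theorem for degenerate two-sample $U$-statistics, whose quadratic limit is precisely this weighted sum of centered $\chi^{2}$ variables indexed by the eigenvalues of $\tilde{k}$.
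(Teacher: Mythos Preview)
The paper does not supply its own proof of this theorem: it is quoted verbatim as ``Theorem 8 in \cite{GBRSS07}, Theorem 12 in \cite{GBRSS12}'' in the appendix and used as a black box, so there is nothing in the manuscript to compare your argument against. Your outline is in fact the standard proof from those references: reduce to the centered kernel $\tilde{k}$ (your cancellation computation is correct under $H_{0}$ with $m=n$), expand via the Mercer decomposition of the Hilbert--Schmidt operator, apply the multivariate CLT to a truncation, and control the tail using $\sum_{l}\lambda_{l}^{2}<\infty$. The identification $z_{l}=u_{l}-v_{l}\sim\cN(0,2)$ and the resulting form $\sum_{l}\lambda_{l}(z_{l}^{2}-2)$ match the cited statement exactly, so your proposal is sound and aligned with the original source the paper defers to.
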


In order to obtain the distribution of $\sum_{i=1}^{\infty}\lambda_{i}(z_{i}^{2}-2)$ numerically, we approximate the eigenvalues $\{\lambda_{i}\}_{i=1}^{\infty}$ in Equation \eqref{eq:mmd_eigen}.
Let $\tilde{\bm{k}}$ denote the centered Gram matrix of $\{x_{1},\ldots,x_{n}\}$ whose $(i,j)$ component is given by $(\tilde{\bm{k}})_{i,j}=k(x_{i},x_{j})- n^{-1} \sum_{b=1}^{n}k(x_{i},x_{b})-n^{-1} \sum_{a=1}^{n}k(x_{a},x_{j}) + n^{-2} \sum_{a,b=1}^{n}k(x_{a},x_{b})$ and $\{\hat{\mu}_{i}\}_{i=1}^{n}$ be the set of the eigenvalues of $\tilde{\bm{k}}$.
Then, it is shown from Theorem 1 in \cite{GFHS09} that $\sum_{i=1}^{n}\hat{\lambda}_{i}(z_{i}^{2}-2) \convd \sum_{i=1}^{\infty}\lambda_{i}(z_{i}^{2}-2)$ where $\hat{\lambda}_{i}=n^{-1}\hat{\mu}_{i}$.
Therefore, the upper $\alpha$-quantile of $n\mathrm{MMD}_{u}(\bm{X}_{n}, \bm{Y}_{n})^{2}$ is numerically obtained from the histogram of $\sum_{i=1}^{n}\hat{\lambda}_{i}(z_{i}^{2}-2)$.
Since the current null hypothesis is $P=Q$, we have $Y_{i} \sim P$ and we can approximate the eigenvalues on the aggregated data, that is, the eigenvalues are approximated by the centered Gram matrix of $\{x_{1},\ldots,x_{n},y_{1},\ldots,y_{n}\}$.
We estimate the quantile of $\sum_{i=1}^{2n}\hat{\lambda}_{i}(z_{i}^{2}-2)$ by the (standard) bootstrap method.
To sum up, the algorithm of the kernel two sample problem is given as follows:
\begin{center}
\begin{minipage}{0.9\linewidth}
\begin{algorithm}[H]
\caption{The algorithm for the type I error by the kernel method}
\label{alg:type_error_kernel}
\begin{algorithmic}[1]
\REQUIRE $\alpha \in (0, 1)$. $m,N \in \lN$. $x_{1},\ldots,x_{n}$: realizations of $X_{1},\ldots,X_{n}, \iid \sim P$. $y_{1},\ldots,y_{n}$: realizations of $Y_{1},\ldots,Y_{n}, \iid \sim Q$. $k$: a real-valued measurable positive definite kernel on $\cX$.
\STATE Compute the upper $\alpha$-quantile $\hat{\xi}_{n,1-\alpha}$ of $n\mathrm{MMD}_{u}(\bm{X}_{n}, \bm{Y}_{n};k)^{2}$ by the bootstrap method on the aggregated data.
\FOR{$\ell= 1,\ldots,N$}
\STATE Resample $m$ samples $x_{1,\ell},\ldots,x_{m,\ell}$ (resp. $y_{1,\ell},\ldots,y_{m,\ell}$) from $\{x_{1},\ldots,x_{n}\}$ (resp. $\{y_{1},\ldots,y_{n}\}$). 
\STATE Compute $m\mathrm{MMD}_{u}(\bm{x}_{m,\ell}, \bm{y}_{m,\ell};k)$.
\ENDFOR
\STATE Compute $\hat{p}:=N^{-1} \sum_{\ell=1}^{N} \one_{(m\mathrm{MMD}_{u}(\bm{x}_{m,\ell}, \bm{y}_{m,\ell};k) \leq \hat{\xi}_{n,1-\alpha})}$.
\RETURN $1-\hat{p}$. 
\end{algorithmic}
\end{algorithm}
\end{minipage}
\end{center}
Since the output $\hat{p}$ of Algorithm \ref{alg:type_error_kernel} is the acceptance ratio of $H_{0}$, $1-\hat{p}$ is the type I error when $H_{0}$ is true.

\bibliographystyle{apalike}
\bibliography{reference}

\end{document}